\numberwithin{equation}{section}
\theoremstyle{plain}
\numberwithin{equation}{section}
\newtheorem{thm}{Theorem}[section]
\newtheorem{prop}[thm]{Proposition}
\newtheorem{lemma}[thm]{Lemma}
\newtheorem{cor}[thm]{Corollary}
\theoremstyle{remark}
\newtheorem{remark}[thm]{Remark}
\theoremstyle{definition}
\newtheorem*{acknowledgements}{Acknowledgements}
\providecommand{\abs}[1]{\left\lvert#1\right\rvert}
\providecommand{\norm}[1]{\left\|#1\right\|}
\providecommand{\ip}[1]{\langle#1\rangle}
\renewcommand{\div}{{\rm div}}
\newcommand{\curl}{{\rm curl}}
\newcommand{\Z}{\mathbb{Z}}
\newcommand{\N}{\mathbb{N}}
\newcommand{\R}{\mathbb{R}}
\newcommand{\supp}{\rm supp }
\newcommand{\wt}{\widetilde}
\newcommand{\what}{\widehat}
\newcommand{\ol}{\overline}
\newcommand{\e}{\varepsilon}
\newcommand{\ul}[1]{\underline{\smash{#1}}}
\title{On the Global Stability of a Beta-Plane Equation}
\author{Fabio Pusateri}
\author{Klaus Widmayer}
\date{\today}%, \currenttime}
\address{Department of Mathematics, Princeton University, Princeton, NJ 08544, USA}
\email{fabiop@math.princeton.edu}
\address{Department of Mathematics, Brown University, 151 Thayer Street, Providence, RI 02912, USA}
\email{klaus@math.brown.edu}
\begin{document}

\begin{abstract}
We study the motion of an incompressible, inviscid two-dimensional fluid in a rotating frame of reference.
There the fluid experiences a Coriolis force, which we assume to be linearly dependent on one of the coordinates.
This is a common approximation in geophysical fluid dynamics and is referred to as $\beta$-plane.
In vorticity formulation the model we consider is then given by the Euler 
equation with the addition of a linear anisotropic, non-degenerate, dispersive term.
This allows us to treat the problem as a quasilinear dispersive equation whose linear solutions exhibit decay in time at a critical rate. 

Our main result is the global stability and decay to equilibrium of sufficiently small and localized solutions.
Key aspects of the proof are the exploitation of a ``double null form'' that %can be seen by symmetrization and
annihilates interactions between spatially coherent waves %with parallel frequencies,
and a lemma for Fourier integral operators which allows us to control a strong weighted norm. 
%and is based on a non-degeneracy property of the nonlinear phase function associated with the problem.
\end{abstract}

\maketitle
\tableofcontents

\section{Introduction}
A basic model for a fluid in a rotating frame of reference is given by the Euler-Coriolis equation
\begin{equation}
\label{NSC}
 \begin{cases}
  \begin{array}{rl}
  \partial_t v+v\cdot\nabla v + f\Omega\wedge v+\nabla p = 0, & \\ \div\; v = 0, &
  \end{array}
 \end{cases}
\end{equation}
where $v = (v_1,v_2,v_3) :(t,x) \in \R \times\R^3\to\R^3$ and $p:(t,x) \in \R\times\R^3\to\R$ are the velocity and pressure of the fluid, respectively.
Here, $f\Omega \wedge v$ is the Coriolis force experienced in the rotating frame, with $\Omega\in\R^3$ being the axis of rotation and $f:\R^3\to\R$ 
the strength of the effect, which depends on the spatial location (but not on time).
To describe waves on the surface of the Earth, a common approximation in geophysical fluid dynamics (see \cite{mcwilliamsGFD,GFD})
consists in choosing $\Omega=(0,0,1)^\intercal$ and assuming trivial dynamics in the vertical direction, i.e.\ $\partial_3 v=0$.
One can then reduce matters to a two-dimensional system
\begin{equation}
\label{equ}
 \begin{cases}
  \begin{array}{rl}
  \partial_t u + u\cdot\nabla u + {(-f u_2,f u_1)}^\intercal + \nabla p = 0, & \\ \div\; u = 0, &
  \end{array}
 \end{cases}
\end{equation}
where now $u:(t,x) \in \R\times\R^2\to\R^2$, $p:(t,x) \in \R\times\R^2\to\R$ and $f:\R^2\to\R$.
A solution to the original system \eqref{NSC} is then recovered by setting $(v_1,v_2) = (u_1,u_2)$ and solving a transport equation for $v_3$.

Passing to a scalar equation using the vorticity $\omega:=\curl\; u=\partial_1 u_2-\partial_2 u_1$ yields
\begin{equation}
\label{eqo}
 \partial_t \omega+u\cdot\nabla\omega=-u\cdot\nabla f,\qquad u=\nabla^\perp(-\Delta)^{-1}\omega.
\end{equation}
%From wikipedia:
On a rotating sphere, such as the Earth, the force $f$ varies with the sine of the latitude.
In a first rough approximation, so-called $f$-plane approximation, this variation is ignored,
and a fixed value $f_0$ %, appropriate for a particular latitude,
is used throughout the domain.
%This approximation can be visualized as a tangent plane touching the surface of the sphere at this latitude.
A more accurate and very common\footnote{Such a modeling assumption is made in various contexts: examples include rotating shallow-water equations, Rossby waves and quasi-geostrophic scenarios, see \cite[Chapter 4]{mcwilliamsGFD}, \cite[Chapter 4]{Majda-A-O-PDE}, \cite[Chapter 3]{GFD} amongst others. We also remark that in \cite{AIP_1.3141499}, equation \eqref{eq:betaplane} was viewed as part of a larger family of equations to model 2d dispersive turbulence.} model in geophysical fluid dynamics is a linear approximation to this variability, which is usually referred to as ``$\beta$-plane'', % (even though it no longer describes dynamics on a hypothetical tangent plane)
see e.g.\ \cite[Chapter 2]{mcwilliamsGFD}, \cite[Chapter 3]{GFD}.
Assuming that the strength of the Coriolis force depends linearly on the latitude,
\begin{equation*}
 f(x,y)=f_0+\beta (y-y_0),
\end{equation*}
we arrive at the so called \emph{$\beta$-plane equation}
\begin{equation}\label{eq:betaplane}
\partial_t \omega+u\cdot\nabla\omega = \beta L_1\omega, \qquad L_1 := \frac{\partial_x}{\Delta} =\frac{R_1}{\abs{\nabla}}, 
  \qquad u=\nabla^\perp(-\Delta)^{-1}\omega,
\end{equation}
for $\omega:\R\times\R^2\to\R$.
Here $\beta$ is the parameter of linearity of the Coriolis force, which by rescaling can be assumed to be equal to one, %$\beta=1$.
and $R_1$ stands for the Riesz transform in the first coordinate:
\begin{align*}
\what{R_1 g}(\xi) = \frac{-i\xi_1}{|\xi|} \what{g}(\xi), \qquad \what{g}(\xi) := \frac{1}{2\pi} \int_{\R^2} e^{-ix \cdot \xi} g(x) \, dx.
\end{align*}

On one hand, one can view \eqref{eq:betaplane} as a perturbation of the Euler equation by a constant coefficient differential operator
and show, by arguments akin to those for 2d Euler, %(see for example \cite{KiselevSverak} and references therein),
the existence of global solutions (even for large data)
with at most double exponential growth in $H^s$, $s>1$ (see \cite[Appendix B]{b-plane}).
On the other hand \eqref{eq:betaplane} can also be viewed as a quasilinear dispersive equation,
in the sense that it is a nonlinear version of the equation $\partial_t\omega=L_1\omega$, solutions of which exhibit dispersive decay %at a rate of $t^{-1}$,
as will be shown further below.
%Using this latter approach we aim to show that for any sufficiently small and regular initial condition $\omega|_{t=0}$,
%solutions of \eqref{eq:betaplane} stay small for all times and decay to equilibrium.

\subsection{Main Result}
The content of this article is a treatment of the nonlinear problem \eqref{eq:betaplane}, with the result that for sufficiently small and localized initial data, 
solutions to the Cauchy problem decay like solutions of the linear problem, and the zero solution of \eqref{eq:betaplane} is globally nonlinearly stable in a strong sense.
%More precisely, we shall prove:
We can state our main result as follows:

\begin{thm}%[Generic Statement]
\label{thm:maintheo0}
Consider the initial value problem for the $\beta$-plane equation
 \begin{equation}
\label{betaplane}
  \begin{cases}
  \begin{array}{ll}
   \partial_t \omega+u\cdot\nabla\omega = L_1\omega, & \, \quad u=\nabla^\perp(-\Delta)^{-1}\omega, \\
   \omega(0) = \omega_0. &
  \end{array}
  \end{cases}
 \end{equation}
There exist $N\gg 1$, $\varepsilon_0>0$, and a weighted $L^2$-based function space $X \subset \dot{W}^{1,1}$ on $\R^2$ such that
for any initial data with $\norm{\omega_0}_X, \; \norm{\omega_0}_{H^N}\leq \varepsilon_0$,
there exists a unique global solution of \eqref{betaplane} which decays at the linear rate, 
namely $\norm{\omega(t)}_{L^\infty}\lesssim\varepsilon_0 (1+|t|)^{-1}$, and scatters.
%Moreover, they satisfy the bounds $\|e^{-itL_1}\omega(t)\|_X\lesssim \varepsilon_0$, $\|\omega(t)\|_{H^N}\lesssim \varepsilon_0 (1+|t|)^{\rho}$,
%$\rho\ll 1$, for all $t \in \R$, and decay, so that in particular
\end{thm}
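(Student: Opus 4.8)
The plan is to set up a bootstrap/continuation argument in the spirit of the space-time resonance method (Germain--Masmoudi--Shatah), exploiting the dispersive decay of the linearized flow $e^{tL_1}$. First I would diagonalize the linear part: writing the dispersion relation $\Lambda(\xi) = \xi_1/|\xi|^2$ and the profile $g(t) := e^{-tL_1}\omega(t)$, the equation \eqref{betaplane} becomes a purely quadratic nonlinear equation for $g$, $\partial_t g = Q(g,g)$, where in Fourier variables $Q$ is an oscillatory bilinear integral with phase $\Phi(\xi,\eta) = \Lambda(\xi) - \Lambda(\eta) - \Lambda(\xi-\eta)$ and a symbol coming from the Biot--Savart law $u = \nabla^\perp(-\Delta)^{-1}\omega$ paired against $\nabla\omega$. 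A stationary phase / dispersive estimate then gives the linear decay $\norm{e^{tL_1} f}_{L^\infty} \lesssim |t|^{-1}(\norm{f}_{X} + \text{higher Sobolev})$ at the critical rate $t^{-1}$, which is exactly the rate claimed; because this rate is critical (borderline non-integrable), one cannot close by brute force and must use the oscillatory structure of $\Phi$.

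The core of the argument is to control two quantities uniformly in time: a high-order Sobolev norm $\norm{\omega(t)}_{H^N}$ (for energy/local-existence purposes) and the strong weighted norm $\norm{g(t)}_{X}$, which encodes localization and is the quantity whose boundedness yields both decay (via the dispersive estimate) and scattering. The Sobolev bound follows from an energy estimate: since $L_1$ is skew-adjoint it contributes nothing, and the transport nonlinearity $u\cdot\nabla\omega$ is handled by the usual commutator estimates for 2d Euler, giving at worst slow (a priori double-exponential, but under smallness and decay of $\norm{u}_{L^\infty}$ actually bounded or slowly growing) growth of $\norm{\omega}_{H^N}$. The delicate part is the weighted norm: applying the weight (essentially a vector field / $\partial_\xi$ derivative on the profile side) to Duhamel's formula produces terms where $\partial_\xi$ hits the phase $e^{it\Phi}$, generating a factor of $t\,\partial_\xi\Phi$ that threatens to destroy the estimate. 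This is where the ``double null form'' advertised in the abstract enters: one shows that the symbol of $Q$ vanishes to appropriate order on the set where $\partial_\xi \Phi = 0$ (the space resonant set), i.e.\ the interaction of spatially coherent wave packets is annihilated, so that the dangerous factor of $t$ is compensated. Combined with integration by parts in time against the oscillation $e^{it\Phi}$ (to handle the time-resonant but space-non-resonant regime) and the Fourier integral operator lemma mentioned in the abstract (which lets one bound the resulting FIOs in the weighted $L^2$ space $X$ even after the loss coming from $\partial_\xi$), one obtains the a priori bound
\begin{equation*}
\norm{g(t)}_{X} \lesssim \e_0 + \big(\sup_{0\le s\le t}\norm{g(s)}_{X} + \sup_{0\le s\le t}\norm{\omega(s)}_{H^N}\big)^2,
\end{equation*}
which, together with the energy bound and a standard bootstrap, closes for $\e_0$ small.

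Having closed the bootstrap, global existence follows from local well-posedness (available since \eqref{betaplane} is a transport equation perturbed by a bounded-order constant-coefficient operator, cf.\ the remarks after \eqref{eq:betaplane}) plus the uniform-in-time bounds, and the decay $\norm{\omega(t)}_{L^\infty} \lesssim \e_0(1+|t|)^{-1}$ is immediate from the dispersive estimate applied to the controlled profile. Scattering follows because $\partial_t g = Q(g,g)$ with $Q$ quadratic and the decay rate, once the weighted norm is controlled, makes $\int_0^\infty \norm{Q(g(s),g(s))}_{H^{N'}}\,ds < \infty$ for a slightly lower regularity $N' < N$, so $g(t)$ converges in $H^{N'}$ as $t\to\infty$ to some $g_\infty$, i.e.\ $\omega(t) - e^{tL_1}g_\infty \to 0$. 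I expect the main obstacle to be the weighted-norm estimate at the critical decay rate: reconciling the $t$-loss from differentiating the phase with the gain from the double null form requires a careful geometric analysis of the resonant sets of $\Phi(\xi,\eta) = \Lambda(\xi)-\Lambda(\eta)-\Lambda(\xi-\eta)$ for the anisotropic symbol $\Lambda(\xi) = \xi_1/|\xi|^2$ — in particular understanding the low-frequency and parallel-frequency degeneracies — and the FIO lemma is precisely the technical device needed to push these bounds through in the strong norm $X$.
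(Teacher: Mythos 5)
Your overall roadmap coincides with the paper's: the profile $f=e^{-tL_1}\omega$, Duhamel's formula as an oscillatory integral with phase $\Phi(\xi,\eta)=\frac{\xi_1}{|\xi|^2}-\frac{\xi_1-\eta_1}{|\xi-\eta|^2}-\frac{\eta_1}{|\eta|^2}$, a bootstrap coupling a slowly growing $H^N$ energy bound with a uniform weighted $X$-bound, the critical $t^{-1}$ stationary-phase estimate, and a bound for the bilinear Duhamel term that is quadratic in the bootstrap norms. That much matches. But the mechanism you propose for the crucial weighted estimate contains a confusion that would derail an attempt to execute it.

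You place the ``double null form'' on the set $\{\nabla_\xi\Phi=0\}$ and claim it compensates the factor $t\,\nabla_\xi\Phi$ produced when the weight hits the phase. Two problems. First, the symmetrized symbol $\mathfrak{m}(\xi,\eta)=\tfrac12\,(\xi\cdot\eta^\perp)\,\xi\cdot(\xi-2\eta)\,|\eta|^{-2}|\xi-\eta|^{-2}$ vanishes to second order on $\{\nabla_\eta\Phi=0\}=\{\xi=0\}\cup\{\xi=2\eta\}$ (at $\xi=2\eta$ both factors $\xi\cdot\eta^\perp$ and $\xi\cdot(\xi-2\eta)$ vanish), whereas on $\{\nabla_\xi\Phi=0\}=\{\eta=0\}\cup\{\eta=2\xi\}$ it vanishes only to first order at $\eta=2\xi$ and is singular at $\eta=0$. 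Second, a symbol that vanishes where a gradient of $\Phi$ vanishes cannot in any case ``compensate'' the loss $t\,\nabla_\xi\Phi$, since that loss is large precisely where $\nabla_\xi\Phi$ is \emph{not} small. In the paper the $t\,\nabla_\xi\Phi$ loss is absorbed by a finite-speed-of-propagation argument (integration by parts in $\xi$ when the output location satisfies $|x|\gg t|\nabla_\xi\Phi|$), while the double null form is used where integration by parts in $\eta$ degenerates (near $\xi=2\eta$) and, crucially, to prove the almost $t^{-2}$ decay of $\partial_t f$ that makes normal forms (integration by parts in time) viable. Even after these devices there remains a region --- $|\Phi|\sim t^{-1}$, $|\nabla_\eta\Phi|\sim 1$, profiles localized at $|x|\sim t$ --- where neither stationary phase nor normal forms gain anything; the paper closes it with a $TT^\ast$ bound for the resulting Fourier integral operator resting on the non-degeneracy of the mixed Hessian $\nabla^2_{\xi,\eta}\Phi$ evaluated along the level sets of $\Phi$, after an anisotropic localization in $|\xi_1-\eta_1|$. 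Your proposal invokes ``the FIO lemma'' but does not identify this non-degeneracy, which is the actual new input. Finally, your scattering argument as stated does not close: quadratic nonlinearity with $t^{-1}$ decay of each factor gives only $\|Q(g,g)\|_{L^2}\lesssim t^{-1}$, which is not integrable; one needs the improved, almost $t^{-2}$, bound on $\partial_t f$ furnished by the double null form, or equivalently the dyadically summable version $\|B_m\|_X\lesssim \e_1^2 2^{-\delta^3 m}$ of the weighted estimate.
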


A more precise statement of the theorem is presented as Theorem \ref{thm:maintheo} in Section \ref{sec:setup}, where we also illustrate its proof through a bootstrap argument in Subsection \ref{ssec:bootstrap}. The key difficulty here lies in establishing a global control over
a suitably chosen weighted $X$-norm of the profile of $\omega$ - see \eqref{eq:X-norm} on page \pageref{eq:X-norm} for the precise definition -
which has to be strong enough to guarantee the $L^\infty$ decay.

\subsection{Background}
To give some context we now present some of the key difficulties in treating the $\beta$-plane equation as a quasilinear dispersive equation.
%
%The Euler nonlinearity $u\cdot\nabla\omega$ makes this a quasilinear problem and thus there is a loss of derivatives to overcome.
%The well-posedness theory then yields solutions that depend continuously, but not Lipschitz continuously, on the initial data.
%
The present model features a quadratic nonlinearity and a critical decay rate of $|t|^{-1}$ at the linear level.
This situation is common to many other dispersive and hyperbolic equations and a variety of different behaviors can occur 
even for small and Schwartz initial data.
For example, one could have global solutions with linear behavior as in the case of (quasilinear) wave equations \cite{K1}
%and nonlinear Schr\"odinger equations \cite{Delort}
with a null condition, blow-up at time $T \approx e^{1/\varepsilon_0}$ as in the compressible Euler equations \cite{Sideris},
nonlinear asymptotics in the sense of modified scattering as for nonlinear Schr\"odinger equations \cite{HN,KP}, 
or growth at infinity as in \cite{Alinhac}.

In the present case solutions are already known to be global, so no blow-up occurs.
Moreover, one can notice that there is a null structure in \eqref{betaplane}.
More precisely, since $u = \nabla^\perp (-\Delta)^{-1} \omega$, the transport term $u \cdot \nabla \omega$ is %vanish
depleted when two parallel frequencies interact.
On the negative side one should also notice that, when seen as a bilinear term in $\omega$, the nonlinearity is singular
because of the $(-\Delta)^{-1}$ factor.
Moreover, the linear operator $L_1$ is anisotropic, and the impossibility of commuting the equation with rotations
introduces several difficulties.

\subsubsection*{Inviscid Euler and the Role of Dispersion}
Generally, inviscid Euler-type nonlinearities can lead to double exponential growth,
as was shown by the example of Sverak and Kiselev \cite{KiselevSverak} on a bounded domain;
see also the works of Denisov \cite{Dengrowth} and Zlato\v{s} \cite{Zlagrowth}.
%To the best of our knowledge, no such result is known on unbounded domains, but the question of global stability of the inviscid Euler equation remains widely open.
In the whole space the question of global stability and asymptotic behavior for the Euler equation is widely open. 
A byproduct of Theorem \ref{thm:maintheo} is that
for sufficiently small data instability in \eqref{betaplane} is prevented by dispersion:
waves with different frequencies travel with distinct velocities and their interactions lose strength over time. %, taming nonlinear interactions.
However, this is a much weaker effect than damping or friction.
Indeed for \eqref{betaplane} the same $L^2$ based estimates as for the inviscid Euler equation $\partial_t\omega+u\cdot\nabla\omega=0$ hold,
because of the skew symmetry (for the inner product in $L^2$) %and commutation with derivatives property
of the constant coefficient right-hand side operator $L_1$.
Also, all Sobolev norms are preserved by the linear flow, and the same blow-up criterion as for 2d Euler holds.

As is shown in this article, \emph{the dispersion produced by $L_1$ acts as a regularizing mechanism that globally stabilizes the fluid}.
A first way of seeing improvements at the hands of dispersion is through a basic energy estimate yielding the following:
assuming a linear decay rate of $\abs{t}^{-1}$ for $Du$ in $L^\infty$
one obtains the slow growth of all Sobolev norms for the nonlinear problem %can be seen to directly yield the slow growth of all Sobolev norms for the nonlinear problem
(whereas in the absence of dispersion, or without control on the rate of dispersion, the best known bounds are double exponential -- 
see \cite[Appendix B]{b-plane}).
%However, such an analysis falls short of providing a satisfactory theory for the present problem.
A finer understanding of the interactions %of various frequencies
in the Euler-type nonlinearity is then needed to show that decay occurs for nonlinear solutions.
%This leads us to the framework of space-time resonances, as discussed futher below.

In earlier work of T. Elgindi and the second author \cite{b-plane},
stability for the $\beta$-plane equation \eqref{betaplane} for arbitrarily large times was established:
it was shown that for any $M\in\N$ there exists a threshold $\e_M>0$, below which initial data of size $\e\leq\e_M$
lead to solutions that decay on time scales at least $\e^{-M}$ -- for more details see \cite[Theorem 2.1]{b-plane}.
Apart from this work, the literature on the $\beta$-plane equation is oriented towards questions of relevance in the realm of geophysical fluid dynamics.
An exhaustive list is beyond the scope of this article, and beyond the expertise of its authors, so we refer the %inclined
reader for some overview to the books \cite{MR1925398,Majda-A-O-PDE,mcwilliamsGFD}, for example.

%\subsubsection*{Critical Decay and Anisotropy}
%As mentioned above, the linear operator $L_1$ makes the $\beta$-plane equation anisotropic.
%In particular there is no rotational symmetry, but only a scaling one: If $\omega$ solves \eqref{eq:betaplane}, then so does
%$\omega_\lambda(t,x):=\lambda^{-1}\omega(\lambda^{-1}t,\lambda x)$.
%%A direct approach using only vector fields is thus not feasible (in contrast to the case of the anisotropic KP-I equation as treated in \cite{KP-I}).
%On the brighter side, solutions to the linear equation $\partial_t\omega=L_1\omega$ exhibit dispersive decay at the rate of $t^{-1}$
%(provided one controls $\omega_0$ in a suitable Besov space).
%For a dispersive equation in two spatial dimensions this is the fastest rate of decay one may hope for,
%but it is critical for a bilinear problem in the sense that it does not yield global control of energy norms through standard energy estimates.

%\subsubsection*{Quasilinear Bilinearity}
%The Euler nonlinearity $u\cdot\nabla\omega$ makes this a quasilinear problem and thus there is a loss of derivatives to overcome.
%The well-posedness theory then yields solutions that depend continuously, but not Lipschitz continuously, on the initial data.
%Moreover, when seen as a bilinear term in $\omega$, the nonlinearity is singular,
%since the velocity $u$ is obtained from the vorticity $\omega$ as $u=\nabla^\perp (-\Delta)^{-1}\omega$.

\subsubsection*{Resonance Structure and (Double) Null Form}
At the basis of our approach is the formulation of the problem in a way that makes it amenable 
to techniques from harmonic analysis.
%an approach that has become predominant in recent years following ...
This is done by working with the profile of the vorticity $f(t):=e^{-tL_1}\omega(t)$, and writing the Duhamel formula for solutions of \eqref{betaplane}
in terms of this profile $f$ in Fourier space, so to obtain an integral expression which can be viewed as an oscillatory integral --
see the beginning of Section \ref{sec:setup} and the formulas \eqref{defhatf}-\eqref{defPhi}.

From this point of view the resonances of the equation, that is, roughly speaking, those sets of frequencies that do not produce oscillations,
play a key role in the analysis of the nonlinear interactions.
This starting point is inspired by the method of space-time resonances, as introduced in %by Germain, Masmoudi and Shatah in
\cite{GMSGWW3d}.
Without entering into too much detail, for now we point out that the space-time resonant set for this equation is one dimensional,
which is the generic situation for quadratic nonlinearities in two dimensions;
thus it does not provide any additional smallness, in contrast to other problems such as \cite{GMSGWW3d,GMS2dQNLS}.
However, as already pointed out above, a null form is available in the nonlinearity:
%quadratic interactions are annihilated at the resonances.
the symbol of the quadratic interaction, see \eqref{defhatf}-\eqref{defPhi}, 
vanishes when $\nabla_\eta \Phi$ does, see \eqref{eq:nabla_etaPhi}.
See also the models in \cite{PusateriShatah,OhPusateri,HPS} for similar behaviors. % Other references?
%a closer inspection reveals that interactions between parallel frequencies are annihilated.

In fact, as we shall explain in detail below, even more is true for \eqref{betaplane}: One has a ``double'' null form, a quadratic
(instead of linear) degree of vanishing of the symbol, as can be seen by symmetrizing the expression \eqref{defhatf}.
This is a key insight which greatly improves the control one has over interactions close to the (space) resonant set, and for example
%Moreover, it
yields much better decay estimates for $\partial_t f$ than one would normally expect.
%better decay for the first iterate in $L^2$.

In our proof we will also exploit the special, anisotropic, geometric structure of interactions 
near the (time) resonances through a $TT^*$ argument,
which was previously used in \cite{DIPP,DIPau}.
%to deal with energy estimates in problem with less than $|t|^{-1}$ and a codimension one set of (time) resonances.
However, here we employ such an argument in a different context,
not for the purpose of establishing energy estimates,
but as another means of extracting more oscillations in the bilinear interactions.
This allows us to prove a strong weighted bound for our solutions which in turn implies the desired decay over time.

\subsection{Plan of the Article}
In Section \ref{sec:setup} we begin by setting up the problem and give our detailed functional framework.
We then state a precise formulation of Theorem \ref{thm:maintheo0} (see Theorem \ref{thm:maintheo})
%on page \pageref{thm:maintheo})
and discuss its proof using a bootstrap argument.
We see there that a fractional weighted estimate, see \eqref{MainBound100'}, %or the slightly stronger \eqref{MainBound100})
is at the core of our efforts.
By symmetrizing the formulation of the $\beta$-plane equation we obtain a ``double null form''.
As a first application this yields improved bounds for the first iterate (see Lemma \ref{Lemdsf}).
The rest of the article is then devoted to establishing the weighted estimate.

In Section \ref{sec:prelim+FS} we go through preliminary reductions and a finite speed of propagation argument
that limits the range of parameters we need to consider for the weighted estimate.
Further reductions are then presented in Section \ref{sec:weightI}.
Using various localizations we balance smallness of relevant sets and repeated integration by parts to essentially reduce
to a problem where only frequencies of roughly order $1$ are involved.
These arguments crucially rely on the improved bounds due to the double null form achieved through symmetrization.

Finally, in Section \ref{sec:weightII} we exploit a non-degeneracy property of the phase function $\Phi$ (defined in \eqref{defhatf}-\eqref{defPhi}) via a $TT^*$ argument,
in combination with an appropriate anisotropic localization, thereby concluding the proof of the weighted estimate.

In Section \ref{sec:aux} we collect some useful lemmata.

%\vskip5pt
%\noindent
\begin{acknowledgements}
The authors would like to thank Tarek Elgindi for his helpful comments in many joint discussions.
\end{acknowledgements}

%\vskip20pt
\section{Setup}\label{sec:setup}
The Duhamel formulation associated to the $\beta$-plane equation \eqref{betaplane} is
\begin{equation*}
 \omega(t)=e^{tL_1}\omega_0+\int_0^t e^{(t-s)L_1}u\cdot\nabla\omega(s)\;ds.
\end{equation*}
Written in terms of the profile
\begin{align*}
f(t):=e^{-tL_1}\omega(t)
\end{align*}
this reads
\begin{equation}\label{defhatf}
 \hat{f}(t,\xi)=\hat{f}_0(\xi) + \frac{1}{(2\pi)^2}\int_0^t\int_{\R^2} 
  e^{is\Phi(\xi,\eta)}\frac{\xi\cdot\eta^\perp}{\abs{\eta}^2} \what{f}(s,\xi-\eta) \what{f}(s,\eta) d\eta ds
\end{equation}
with
\begin{equation}\label{defPhi}
 \Phi(\xi,\eta):=\frac{\xi_1}{\abs{\xi}^2}-\frac{\xi_1-\eta_1}{\abs{\xi-\eta}^2}-\frac{\eta_1}{\abs{\eta}^2}.
\end{equation}
From now on we will omit the time dependence of the profiles in this expression, since it is clear from the context.
%Also, for ease of reference we record here that
%\begin{align}
%\label{nabla_xiPhi0}
% \abs{\nabla_\xi\Phi} = \frac{\abs{\eta} \abs{\eta-2\xi}}{\abs{\xi-\eta}^2 \abs{\xi}^2},
%\qquad \abs{\nabla_\eta\Phi} = \frac{\abs{\xi} \abs{\xi-2\eta}}{\abs{\xi-\eta}^2 \abs{\eta}^2}.
%\end{align}

We define the quadratic nonlinearity $B(f,f)$ through its Fourier transform
\begin{equation}\label{FSB}
 \mathcal{F} B(f,f) (t,\xi) := \int_0^t\int_{\R^2} e^{is\Phi(\xi,\eta)}\frac{\xi\cdot\eta^\perp}{\abs{\eta}^2}\widehat{f}(s,\xi-\eta)\widehat{f}(s,\eta) d\eta ds,
\end{equation}
so that the Duhamel formula \eqref{defhatf} can be written as
\begin{equation}\label{eq:DHFT}
\widehat{f}(t,\xi) = \widehat{f_0}(\xi) + \frac{1}{(2\pi)^2} \mathcal{F} B(f,f) (t,\xi).
\end{equation}

\subsubsection*{Conserved Quantities}
For future reference we note that an explicit calculation using \eqref{equ} and \eqref{eqo} shows that
the $L^2$-norms of both $u$ and $\omega$ are conserved along the flow of the equation:
\begin{equation*}%\label{eq:L2cons}
 \norm{\omega(t)}_{L^2} = \norm{\omega(0)}_{L^2} \quad \text{ and } \quad \norm{u(t)}_{L^2}=\norm{u(0)}_{L^2}, \qquad t \in \R.
\end{equation*}
As an immediate consequence we obtain that the $\dot{H}^{-1}$ norms of $\omega$ and $f$ are controlled as well:
\begin{equation}
\label{eq:L2cons'}
\big\| \abs{\nabla}^{-1} f \big\|_{L^2} = \big\| \abs{\nabla}^{-1} \omega \big\|_{L^2} \lesssim \norm{u}_{L^2}.
\end{equation}

%\subsection{Functional Framework and Dispersive Estimate}
\subsubsection*{Notation}
In this article we will work with localizations in frequency, space and time. 
To define them, as is standard in Littlewood-Paley theory
we let $\varphi: \R \to [0,1]$ be an even, smooth function supported in $[-8/5,8/5]$ and equal to $1$ on $[-5/4,5/4]$.
With a slight abuse of notation we also let $\varphi$ be the corresponding radial function on $\R^2$.
%radial cut-off function $\varphi:[0,\infty)\to [0,1]$ which satisfies $\supp(\varphi)\subset [\frac{1}{2},2]$ and $\varphi(r)=1$ in a neighborhood of $r=1$.
%When invoking arguments $y\in\R^2$ we will commit a slight abuse of notation and write $\varphi(y)=\varphi(\abs{y})$.
For $k\in\Z$ we define $\varphi_k(x) := \varphi(2^{-k}|x|) - \varphi(2^{-k+1}|x|)$, so that the family $(\varphi_k)_{k\in\Z}$
forms a partition of unity,
\begin{equation*}
 \sum_{k\in\Z}\varphi_k(\xi)=1, \quad \xi \neq 0.
\end{equation*}
We also let
\begin{align*}
\varphi_{I}(x) := \sum_{k \in I \cap \Z}\varphi_k, \quad \text{for any} \quad I \subset \R, \qquad
\varphi_{\leq a}(x) := \varphi_{(-\infty,a]}(x), \qquad \varphi_{> a}(x) = \varphi_{(a,\infty]}(x),
\end{align*}
with similar definitions for $\varphi_{< a},\varphi_{\geq a}$.
To these cut-offs we associate frequency projections $P_k$ through
\begin{equation*}
 P_k g:=\mathcal{F}^{-1}\left(\varphi_k(\xi)\what{g}(\xi)\right)
\end{equation*}
and define similarly $P_{I}g:=\mathcal{F}^{-1}\left(\varphi_{I}(\xi)\what{g}(\xi)\right)$,
$P_{\leq k}g:=\mathcal{F}^{-1}\left(\varphi_{\leq k}(\xi) \what{g}(\xi)\right)$, $k\in\Z$ etc.
We will also sometimes denote $\wt{\varphi}_k = \varphi_{[k-2,k+2]}$.

To simultaneously localize in space, for $(k,j)\in\mathcal{J}:=\{(k,j)\in \Z \times \Z: \; k+j\geq 0,\;j\geq 0\}$ we let
\begin{equation}
\label{phijk}
\varphi_j^{(k)}(x):=
\begin{cases}
 \varphi_j(x), \; & j \geq -k+1, \text{ or }j\geq 1,
 \\ \varphi_{\leq 0}(x), \; & j = 0, \quad (k \geq 0)
 \\ \varphi_{\leq -k}(x), \; & j = -k, \quad (k \leq 0).
\end{cases}
\end{equation}
Notice that for any $k\in\Z$ we have $\sum_{j \geq -\min\{0,k\}} \varphi_j^{(k)}(x) = 1$.
We then define
\begin{equation*}%\label{Qjk}
 Q_{jk} g := P_{[k-2,k+2]}\varphi_j^{(k)} P_k g
\end{equation*}
to be the operator that localizes both in frequency and space. This will often be used to decompose our profiles into atoms
\begin{align}
\label{Qjkdecomp}
g = \sum_{(k,j)\in\mathcal{J}} Q_{jk} g.  %g_{j,k} = P_{[k-2,k+2]} Q_{jk} g.
\end{align}

For notational convenience we also introduce the shorthand $\ip{t}:=\sqrt{1+t^2}$ for $t\in\R$.

%Then we again have the partition of unity property $g=\sum_{(j,k)\in\mathcal{J}}Q_{jk}g$.

\subsubsection*{The Main Norm}
Apart from the usual Sobolev and Lebesgue spaces we will be using a weighted function space built on $L^2$ in an atomic way: 
with the notation $k^+:=\max\{k,0\}$ we let
\begin{equation}\label{eq:X-norm}
 \norm{g(t)}_X:= \sup_{(k,j)\in\mathcal{J}} 2^{(k+j)(1+\delta)} 2^{4k^+} \norm{Q_{jk}g(t)}_{L^2}, \quad \delta = 0.5 \cdot 10^{-4}.
\end{equation}
%Here we have also introduced the notation $k^+:=\max\{k,0\}$. %for $k\in\Z$.
This choice of norm is motivated by our quest to control the $L^\infty$ decay of $\omega$ through the 
dispersive estimate \eqref{eq:dispest} below.
The use of weighted $L^2$ norms in quasilinear dispersive problems is fairly standard.
Here we have decided to use a %norm of the form $x^{1+\delta}$...
fractional weight following the functional framework introduced in \cite{IoPau1}.
%Notice that controlling $x^{1+\delta} f$ in $L^2$ implies control of $f$ in $L^1$, hence decay in $L^\infty$.
The particular choice of putting the same number of derivatives (the power of $2^k$)
as the number of weights (the power of $2^j$) is dictated by the characteristics of this specific problem,
including the singularity of the bilinear form in \eqref{FSB} and the ``speed of propagation'' of linear frequencies. 
%which is $\approx 2^{-2k}$.

\subsubsection*{Dispersive Estimate}
For the linear semigroup $e^{tL_1}$ we have the following decay estimate:
\begin{lemma}%\label{lem:dispest}
For $g\in\mathcal{S}(\R^2)$ and $k\in\Z$ we have
\begin{equation}\label{eq:dispest}
 \norm{e^{tL_1} P_k g}_{L^\infty} \lesssim |t|^{-1} 2^{3k} \norm{P_k g}_{L^1}.
\end{equation}
\end{lemma}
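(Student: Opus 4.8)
The plan is to estimate the convolution kernel of the linear semigroup via stationary phase. Since $e^{tL_1}$ is the Fourier multiplier with symbol $e^{-it\Lambda(\xi)}$, where $\Lambda(\xi):=\xi_1/\abs{\xi}^2$ is homogeneous of degree $-1$, and since $\wt{\varphi}_k\varphi_k=\varphi_k$, we may write $e^{tL_1}P_kg=K_{t,k}\ast(P_kg)$ with
\begin{equation*}
 K_{t,k}(x):=\frac{1}{(2\pi)^2}\int_{\R^2}e^{i(x\cdot\xi-t\Lambda(\xi))}\,\wt{\varphi}_k(\xi)\,d\xi,
\end{equation*}
so by Young's inequality it suffices to prove $\norm{K_{t,k}}_{L^\infty}\lesssim\abs{t}^{-1}2^{3k}$. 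Rescaling $\xi=2^k\zeta$ and using $\Lambda(2^k\zeta)=2^{-k}\Lambda(\zeta)$ and $\wt{\varphi}_k(2^k\zeta)=\wt{\varphi}_0(\zeta)$ gives $K_{t,k}(x)=(2\pi)^{-2}2^{2k}\,I(2^{k}x,2^{-k}t)$, where
\begin{equation}\label{eq:dispplan}
 I(y,s):=\int_{\R^2}e^{i(y\cdot\zeta-s\Lambda(\zeta))}\,\wt{\varphi}_0(\zeta)\,d\zeta
\end{equation}
and $\wt{\varphi}_0=\varphi_{[-2,2]}$ is supported where $\abs{\zeta}\sim 1$. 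The trivial bound $\abs{I(y,s)}\lesssim 1$ already gives $\norm{K_{t,k}}_{L^\infty}\lesssim 2^{2k}\leq\abs{t}^{-1}2^{3k}$ when $\abs{2^{-k}t}\leq 1$, so it remains to prove the scale-invariant oscillatory estimate $\sup_{y\in\R^2}\abs{I(y,s)}\lesssim\abs{s}^{-1}$ for $\abs{s}\geq 1$, which yields $\norm{K_{t,k}}_{L^\infty}\lesssim 2^{2k}\abs{2^{-k}t}^{-1}=2^{3k}\abs{t}^{-1}$.

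To prove this estimate I would write the phase of \eqref{eq:dispplan} as $\Psi_v(\zeta):=v\cdot\zeta-\Lambda(\zeta)$ with $v:=y/s$, and use two facts about $\Lambda$ on the (fixed) annulus $\abs{\zeta}\sim 1$ supporting $\wt{\varphi}_0$. First, a direct computation gives $\abs{\nabla\Lambda(\zeta)}=\abs{\zeta}^{-2}$, so $\abs{\nabla\Lambda}\sim 1$ there. Second, since $\Lambda=\partial_{\xi_1}\log\abs{\xi}$ is harmonic away from the origin, one has $\partial_{22}\Lambda=-\partial_{11}\Lambda$ and hence $\det D^2\Lambda=-\big[(\partial_{11}\Lambda)^2+(\partial_{12}\Lambda)^2\big]$; a short check shows that $\partial_{11}\Lambda$ and $\partial_{12}\Lambda$ have no common zero on $\R^2\setminus\{0\}$, so this determinant is strictly negative there, and by homogeneity $\abs{\det D^2\Lambda}\sim 1$ on the support of $\wt{\varphi}_0$. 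Now fix $0<c_0<C_0$ with $\abs{\nabla\Lambda}\in(2c_0,\tfrac12 C_0)$ on that support and split into two regimes. If $\abs{v}\notin[c_0,C_0]$, then $\abs{\nabla\Psi_v}=\abs{v-\nabla\Lambda}\gtrsim 1+\abs{v}$ there, and since all derivatives of $\Psi_v$ of order $\geq 2$ agree with those of $-\Lambda$ (hence are $O(1)$), repeated integration by parts gives $\abs{I(y,s)}\lesssim_M\abs{s}^{-M}$ for every $M$, uniformly in such $v$. If $\abs{v}\in[c_0,C_0]$, then $D^2\Psi_v=-D^2\Lambda$ is non-degenerate of size $\sim 1$, so every critical point of $\Psi_v$ in the support of $\wt{\varphi}_0$ is non-degenerate, hence isolated, with their number bounded uniformly in $v$ (the map $\zeta\mapsto\nabla\Lambda(\zeta)$ is a local diffeomorphism on this compact set); the standard two-dimensional stationary phase lemma then yields $\abs{I(y,s)}\lesssim\abs{s}^{-1}$, with constant uniform in $v$ because adding the linear term $v\cdot\zeta$ changes neither $D^2\Lambda$, nor the higher derivatives of the phase, nor the cutoff $\wt{\varphi}_0$. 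Combining the two regimes proves $\sup_y\abs{I(y,s)}\lesssim\abs{s}^{-1}$.

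The one genuine point is the non-degeneracy $\det D^2\Lambda\neq 0$ on $\R^2\setminus\{0\}$ --- this is exactly what is responsible for the critical $\abs{t}^{-1}$ decay rate in two space dimensions --- together with the verification that the stationary phase bound is uniform over the translation parameter $v=y/s$, i.e.\ over the arbitrary spatial location $y$. The remaining ingredients --- the rescaling, Young's inequality, and the non-stationary-phase estimate --- are routine.
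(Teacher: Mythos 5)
Your argument is correct and is essentially the paper's own proof written out in full: the paper disposes of this lemma by noting that the Hessian determinant of $\xi_1\abs{\xi}^{-2}$ is $-4\abs{\xi}^{-6}$ (hence non-degenerate) and citing the standard stationary phase lemma, which is exactly the rescaling-plus-stationary-phase argument you carry out, with your identity $\det D^2\Lambda=-\big[(\partial_{11}\Lambda)^2+(\partial_{12}\Lambda)^2\big]=-4\abs{\xi}^{-6}$ recovering the paper's stated value.
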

Since the Hessian of the exponent $\xi_1\abs{\xi}^{-2}$ on the Fourier side is $4|\xi|^{-6}$, and so in particular is non-degenerate,
the proof is a standard application of the stationary phase lemma -- see \cite[Proposition 4.1]{b-plane}.
We remark that the right hand side of \eqref{eq:dispest} is controlled by the $X$-norm of $g$ in \eqref{eq:X-norm} above.
%as will be shown below in Section \ref{ssec:bootstrap}.

\subsubsection*{Main Theorem}
In more detail, our Main Theorem \ref{thm:maintheo0} is:
\begin{thm}
\label{thm:maintheo}
Let\footnote{We did optimize on the value of $\delta$, and the related size of $N$, to make the proof more readable. 
Especially in the last part of the argument, in Sections \ref{sec:weightI} and \ref{sec:weightII},
improvements on this values would be possible by tracking more carefully the various parameters involved,
but due to the technicality of the proof, we have decided not to do so. 
It is very likely that a number $N$ between $10$ and $100$ would work.}
$0 < \delta \leq 0.5 \cdot 10^{-4}$, and $N \geq 2.1\cdot\delta^{-1}$.
Then there exists an $\e_0>0$ such that for all $\e\leq \e_0$ and initial data $\omega_0$ with
\begin{align}
\label{initdata}
{\| \omega_0 \|}_{H^N} + {\| \omega_0 \|}_X \leq \e,
\end{align}
the equation \eqref{betaplane} admits a unique global solution $\omega \in C(\R,H^N(\R^2))$.
Moreover, for all $t \in \R$ the solution satisfies the bounds
\begin{align}
\label{maintheobounds}
 \|\omega(t)\|_{H^N}\lesssim \varepsilon_0 (1+|t|)^{C\e_0}, \qquad \|e^{-tL_1}\omega(t)\|_X \lesssim \varepsilon_0,
\end{align}
and, in particular, also the decay estimate
\begin{align}
\label{maintheodecay}
\norm{\omega(t)}_{L^\infty} \lesssim\varepsilon_0 (1+|t|)^{-1}.
\end{align}
Finally, the solutions scatters: for any initial data $\omega_0$ as in \eqref{initdata} there exist unique $f_{\pm\infty} \in X$ such that
\begin{align}
\label{maintheoscatt} 
\norm{e^{-tL_1} \omega(t) - f_{\pm\infty}}_{X} \stackrel{t \rightarrow \pm\infty}{\longrightarrow} 0.
\end{align}
\end{thm}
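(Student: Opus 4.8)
The plan is to set up a standard bootstrap argument in the spirit of the energy method + weighted norm estimates for quasilinear dispersive equations. Define the bootstrap quantity
\[
M(T) := \sup_{t \in [0,T]} \Big[ (1+|t|)^{-C\e_0} \|\omega(t)\|_{H^N} + \|e^{-tL_1}\omega(t)\|_X \Big],
\]
and show that if $M(T) \leq 2\e$ on a time interval where a local solution exists, then in fact $M(T) \leq \e + C\e^2 \leq \tfrac{3}{2}\e$, which by continuity closes the bootstrap and yields the global bounds \eqref{maintheobounds}. Local existence in $H^N$ (for $N$ large) follows from the Euler-type energy method, since $L_1$ is skew-symmetric and bounded on $H^N$; the continuation criterion is the same as for 2d Euler, namely finiteness of $\int_0^T \|\nabla u\|_{L^\infty}\,ds$.

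The two estimates to propagate are as follows. First, the \emph{energy estimate}: differentiating $\|\omega\|_{H^N}^2$ in time, the contribution of $L_1\omega$ vanishes by skew-symmetry, and the transport term is handled by the usual commutator/integration-by-parts argument (Kato--Ponce), giving
\[
\tfrac{d}{dt}\|\omega(t)\|_{H^N}^2 \lesssim \|\nabla u(t)\|_{L^\infty} \|\omega(t)\|_{H^N}^2 .
\]
One then invokes the dispersive decay: from the control on $\|e^{-tL_1}\omega\|_X$ and the dispersive estimate \eqref{eq:dispest} (together with the remark that the right-hand side of \eqref{eq:dispest} is bounded by the $X$-norm, summed over frequencies with the $2^{4k^+}$ and $2^{(k+j)(1+\delta)}$ weights providing the needed summability in $k$ and $j$), one gets $\|\nabla u(t)\|_{L^\infty} \lesssim \|\omega(t)\|_{W^{1,\infty}} \lesssim \e (1+|t|)^{-1}$. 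Integrating the Gr\"onwall inequality produces $\|\omega(t)\|_{H^N} \lesssim \e\, (1+|t|)^{C\e}$, as desired, and simultaneously confirms \eqref{maintheodecay}. Second, the \emph{weighted estimate}: one must show $\|e^{-tL_1}\omega(t)\|_X = \|f(t)\|_X \lesssim \e$. Using the Duhamel formula \eqref{eq:DHFT}, it suffices to bound $2^{(k+j)(1+\delta)}2^{4k^+}\|Q_{jk} B(f,f)(t)\|_{L^2}$ by $C\e^2$ uniformly in $(k,j)\in\mathcal J$ and $t$. This is precisely the fractional weighted bound \eqref{MainBound100'} announced in the introduction, whose proof occupies Sections \ref{sec:prelim+FS}--\ref{sec:weightII}: it combines the double null form obtained by symmetrizing \eqref{FSB}, repeated integration by parts in $s$ and $\eta$ to exploit non-resonance, a finite-speed-of-propagation reduction, and the $TT^*$ argument near the time-resonant set. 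For the present (outer) proof we simply cite that bound.

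Finally, \emph{scattering}: once $\|f(t)\|_X \lesssim \e$ is known for all $t$, one shows that $f(t)$ is Cauchy in $X$ as $t \to \pm\infty$. Concretely, for $t_2 > t_1 \to +\infty$,
\[
\widehat{f}(t_2,\xi) - \widehat{f}(t_1,\xi) = \frac{1}{(2\pi)^2}\int_{t_1}^{t_2}\!\!\int_{\R^2} e^{is\Phi}\,\frac{\xi\cdot\eta^\perp}{|\eta|^2}\,\widehat{f}(s,\xi-\eta)\widehat{f}(s,\eta)\,d\eta\,ds,
\]
and the same multilinear analysis that produces \eqref{MainBound100'} — in fact the improved bounds on $\partial_t f$ coming from the double null form, see Lemma \ref{Lemdsf} — gives an integrable-in-$s$ bound $\|\partial_s f(s)\|_X \lesssim \e^2 (1+|s|)^{-1-\kappa}$ for some $\kappa > 0$, so the tail integral tends to zero. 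Hence $f_{+\infty} := \lim_{t\to+\infty} f(t)$ exists in $X$ and \eqref{maintheoscatt} holds (and likewise for $t\to-\infty$); uniqueness of $f_{\pm\infty}$ is immediate. \textbf{The main obstacle} is, of course, the weighted estimate \eqref{MainBound100'}: the bilinear symbol $\xi\cdot\eta^\perp/|\eta|^2$ is singular at $\eta = 0$, the phase $\Phi$ is anisotropic and does not commute with rotations, and the space-time resonant set is one-dimensional (hence provides no smallness on its own) — overcoming this requires the full strength of the double null form and the $TT^*$ mechanism, and is the technical heart of the paper. The energy estimate, the dispersive decay, and the scattering statement are, by comparison, routine consequences once that bound is in hand.
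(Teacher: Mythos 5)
Your proposal is correct and follows essentially the same route as the paper: a bootstrap propagating the $H^N$ bound (via the standard energy estimate and the dispersive decay furnished by the $X$-norm) and the $X$-norm bound (via Duhamel and the key bilinear estimate \eqref{MainBound100'}, which the paper likewise takes as a black box at this stage). The only small difference is in the scattering step, where the paper deduces the Cauchy property from the time-dyadic version \eqref{MainBound100} with its $2^{-\delta^3 m}$ gain rather than from an $X$-norm decay of $\partial_s f$ (note Lemma \ref{Lemdsf} only gives an $L^2$ bound on $\partial_t f$, not an $X$-bound), but this is the same underlying estimate routed slightly differently.
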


\vskip10pt
\subsection{Proof of the Main Theorem}\label{ssec:bootstrap}
We will prove Theorem \ref{thm:maintheo} through a bootstrap argument.
The main ingredient is the bilinear estimate \eqref{MainBound100}, which establishes Proposition \ref{prop:weight} below.
Since the equation is time reversible it suffices to consider $t>0$.
We will work with the following a priori assumptions.

\subsubsection*{A Priori Assumptions}
%A standard local well-posedness argument (that we omit) for the system \eqref{betaplane} with data $\omega_0$ satisfying
%${\| \omega_0 \|}_{H^N} \leq \e_0$, yields a unique local solution $\omega \in C([0,1], H^N)$ \dots
We assume that for some $T>0$ and $\e_1 = A \e_0$ with a suitably chosen constant $A>1$ to be determined below,
we have
\begin{align}
 \norm{P_k f(t)}_{L^2} &\leq \e_1 \ip{t}^{D \e_0} 2^{-N k^+}, \label{apriori99a}
\\
 \sup_{(k,j)\in\mathcal{J}} \big( 2^{k+j} \big)^{1+\delta} 2^{4k^+} \norm{Q_{jk}f(t)}_{L^2} &\leq \e_1, \label{apriori100a}
\end{align}
for all $t \in [0,T]$ and a suitably large $D>0$.
For small enough $T>0$ the estimates \eqref{apriori99a}-\eqref{apriori100a} hold by virtue of \eqref{initdata}
and a standard local well-posedness argument (that we omit), yielding a unique local solution such that $e^{-tL_1} \omega \in C([0,1], H^N \cap X)$.

\vskip5pt
\subsubsection*{Weighted Estimate}
As a key point in this paper we will prove:
\begin{prop}\label{prop:weight}
Assuming the a priori bounds \eqref{apriori99a}-\eqref{apriori100a}, and with the notations \eqref{FSB} and \eqref{eq:X-norm}, 
for all $t \in [0,T]$ we have
 \begin{equation}\label{MainBound100'}
 %2^{4k^+} 2^{(k+j)(1+\delta)} \norm{Q_{jk} B(f,f)(t)}_{L^2} \lesssim \e_1^2.
\norm{ B(f,f)(t)}_{X} \lesssim \e_1^2
\end{equation}
\end{prop}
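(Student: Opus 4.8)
The plan is to estimate $\|B(f,f)(t)\|_X$ by bounding $2^{(k+j)(1+\delta)} 2^{4k^+} \|Q_{jk} B(f,f)(t)\|_{L^2}$ uniformly over $(k,j) \in \mathcal{J}$. First I would decompose the inputs into atoms, writing $f = \sum Q_{j_1 k_1} f$ and $f = \sum Q_{j_2 k_2} f$, and split the time integral dyadically into pieces $s \sim 2^m$ (plus the piece $s \lesssim 1$, handled by local theory). For each fixed output $(k,j)$ and each choice of $(k_1, j_1), (k_2, j_2), m$ I would estimate the contribution using the a priori bounds \eqref{apriori99a}--\eqref{apriori100a}: the $X$-norm assumption controls the $L^2$ norms of atoms with the gain $2^{-(k_i+j_i)(1+\delta)} 2^{-4k_i^+}$, while \eqref{apriori99a} supplies high-frequency decay. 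The goal is to show that the resulting multi-parameter sum over $(k_1,j_1,k_2,j_2,m)$ converges, with total size $\lesssim \e_1^2$.

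The three mechanisms that must be exploited to close this sum are: the null form, the dispersive decay, and — crucially for the spatially-localized/weighted regime — integration by parts in $\eta$ (i.e. the $TT^*$ / stationary-phase analysis that Section \ref{sec:weightII} is devoted to). Concretely, the symbol $\xi \cdot \eta^\perp / |\eta|^2$ vanishes when $\xi \parallel \eta$, i.e. near the set where $\nabla_\eta \Phi = 0$; after symmetrizing one gets the ``double null form'', meaning the symbol vanishes to second order there, which converts what would be a logarithmically divergent or borderline sum near the resonant set into a convergent one and in particular controls $\partial_t f$ much better than the naive $|t|^{-1}$ rate. Away from the resonant set one integrates by parts in $s$ (using $\Phi \neq 0$) and in $\eta$ (using $\nabla_\eta \Phi \neq 0$) to extract powers of $2^{-m}$, and the dispersive estimate \eqref{eq:dispest}, when one of the inputs is placed in $L^\infty$, converts frequency-localized $L^1$ mass into $2^m$-decay. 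The spatial weight $2^j$ on the output is generated by hitting the oscillatory kernel $e^{is\Phi(\xi,\eta)}$ (or its inverse Fourier transform in $\xi$) with $x$-derivatives, which produce factors of $s\nabla_\xi \Phi$; balancing $2^j$ against $s|\nabla_\xi \Phi| \sim 2^m$ localized near the stationary point of $\Phi$ is exactly what forces the ``same power of derivatives and weights'' choice in \eqref{eq:X-norm}.

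I expect the main obstacle to be the region of small output frequency and/or large spatial weight combined with inputs near the space-resonant set: there the singular factor $|\eta|^{-2}$ is worst, the null form gives only a partial gain, and the phase $\Phi$ is degenerate so naive integration by parts in $\eta$ fails. This is precisely the part handled in Section \ref{sec:weightII} via an anisotropic localization (splitting the $\eta$-variable into components parallel and transverse to the relevant direction) and a $TT^*$ argument on the associated Fourier integral operator, exploiting the non-degeneracy of $\Phi$ in the transverse directions (the Hessian of $\xi_1|\xi|^{-2}$ being $4|\xi|^{-6}$, hence nonzero). One would set up the operator $T$ mapping a localized input profile to the corresponding piece of $B(f,f)$, compute $TT^*$, and use a Schur-type bound on its kernel — obtained from stationary phase in the transverse variable — to get an $L^2 \to L^2$ bound with the required gain; the earlier reductions in Sections \ref{sec:prelim+FS}--\ref{sec:weightI} (finite speed of propagation bounding $j \lesssim m$, and localizations reducing to frequencies of order one) make this tractable. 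The remaining cases — output or input frequencies large, or far from resonances — are by comparison routine: repeated integration by parts in $s$ and $\eta$ together with \eqref{eq:dispest} and \eqref{apriori99a} give rapid decay and summability in all parameters.
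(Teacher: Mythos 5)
Your outline reproduces the architecture of the paper's argument --- dyadic time slicing, atomic decomposition of the profiles, the symmetrized ``double null form,'' the improved bound on $\partial_t f$, finite speed of propagation capping $j$ by roughly $m - 2\min\{k,k_1,k_2\}$, and a $TT^*$ bound for a Fourier integral operator in the hardest regime --- so at the level of strategy it is the same proof. However, there is a conceptual misplacement of where the hard case lives, which would derail an execution of the plan. You locate the difficulty ``near the space-resonant set,'' where $\nabla_\eta\Phi$ degenerates and integration by parts in $\eta$ fails. But that region is exactly where the double null form saves you: the symmetrized symbol $\mathfrak{m}$ vanishes to second order on $\{\nabla_\eta\Phi=0\}$ (see \eqref{symbound104}), and those contributions are closed by H\"older/Schur estimates together with the $\partial_t f$ bound (cf.\ the subcases of Lemma \ref{Lemdsf} and Case 1 in the proof of Proposition \ref{prop:red_deltas}). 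The configuration that survives all the reductions and genuinely requires the $TT^*$ argument is the opposite one, recorded in \eqref{restII}: all frequencies of size roughly $1$, $\abs{\nabla_\eta\Phi}\sim 1$ (i.e.\ \emph{away} from the space-resonant set), $\abs{\Phi}\lesssim 2^{-m}$ (so integration by parts in time gains nothing), and $\ol{j}\sim m\sim s\abs{\nabla_\eta\Phi}$ (so the spatial spread of the profile exactly saturates the gain from integration by parts in $\eta$); the paper's remark after \eqref{restII} makes this explicit.

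Relatedly, the non-degeneracy powering the $TT^*$ step is not the Hessian of the dispersion relation $L(\xi)=\xi_1\abs{\xi}^{-2}$ (that is what yields the linear decay \eqref{eq:dispest}); it is the mixed-Hessian quantity $\what{\Upsilon}=\nabla^2_{\xi,\eta}\Phi\big(\nabla_\xi^\perp\Phi/\abs{\nabla_\xi\Phi},\nabla_\eta^\perp\Phi/\abs{\nabla_\eta\Phi}\big)$, whose size is pinned to $\abs{\xi_1-\eta_1}$ through the algebraic identity \eqref{eq:C-eq}. That identity is the reason the anisotropic cut is made precisely in the variable $\xi_1-\eta_1$ rather than in a generic parallel/transverse splitting, and the $TT^*$ kernel is then bounded by repeated integration by parts along the level sets of $\Phi$ --- chosen so that the rough cutoff $\varphi_{\leq p}(\Phi(\xi,\eta))$ is annihilated, see \eqref{eq:nolossibp} --- not by a stationary-phase computation. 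Without identifying $\what{\Upsilon}$ and its lower bound \eqref{eq:upssize}, the kernel estimate needed for \eqref{L2propconc'} cannot be closed, so this is the one missing idea your plan would have to supply.
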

This estimate is at the heart of our article 
and its proof will be carried out over the course of the remaining Sections \ref{sec:prelim+FS}-\ref{sec:weightII}. 
In fact, we will prove the stronger version \eqref{MainBound100} of the bilinear bound \eqref{MainBound100'}, 
which also implies the scattering statement \eqref{maintheoscatt} of Theorem \ref{thm:maintheo}. 

Assuming Proposition \ref{prop:weight} we now establish the Main Theorem.

\begin{proof}[Proof of Theorem \ref{thm:maintheo}]
Our aim here is to show that the interval on which the a priori estimates \eqref{apriori99a}-\eqref{apriori100a} hold can be extended to infinity.
Using a continuity argument it will suffice to prove that for $t \in [1,T]$%\eqref{apriori99a}-\eqref{apriori100a} with better constants.
\begin{equation}
\label{apriori99i}
\begin{aligned}
 \norm{P_k f(t)}_{L^2} &\leq \frac{\e_1}{2} \langle t \rangle^{D\e_0} 2^{-N k^+},
\\
 \sup_{(k,j)\in\mathcal{J}} \big( 2^{k+j} \big)^{1+\delta} 2^{4k^+} \norm{Q_{jk}f(t)}_{L^2} &\leq \frac{\e_1}{2}. %\label{apriori100i}
\end{aligned}
\end{equation}

Invoking the Duhamel formula \eqref{eq:DHFT} and applying Proposition \ref{prop:weight} yields
\begin{equation*}
\begin{aligned} 
2^{4k^+} 2^{(k+j)(1+\delta)} \norm{Q_{jk}f(t)}_{L^2} &\leq 2^{4k^+} 2^{(k+j)(1+\delta)} \left(\norm{Q_{jk}\omega_0}_{L^2}+\norm{Q_{jk} B(f,f)(t)}_{L^2}\right)
\\
 &\leq \e_0+C\e_1^2\leq \frac{\e_1}{2},
\end{aligned}
\end{equation*}
for $\e_0$ small enough.
Combining this with the decay estimate \eqref{eq:dispest} we also have
\begin{align*}
 \norm{e^{tL_1} P_k f(t)}_{L^\infty} &\lesssim \ip{t}^{-1} 2^{3k} \sum_{j \geq -\min\{0,k\}} 2^j \norm{Q_{jk}f(t)}_{L^2}
 \\
 &\lesssim\ip{t}^{-1} (\e_0 + C\e_1^2)2^{-4k^+}2^{(2-\delta)k}.
\end{align*}
In particular, if $Du$ is the matrix of first derivatives of $u$, we have
\begin{align}
\label{apriori101'}
{\| \omega(t) \|}_{L^\infty} + {\| Du(t) \|}_{L^\infty} \lesssim \ip{t}^{-1} (\e_0 + C\e_1^2),
\end{align}
for all $t\in[0,T]$.
A standard energy estimate for the $\beta$-plane equation (see \cite[Lemma 3.1]{b-plane}) gives the bound
\begin{equation*}
 \norm{\omega(t)}_{H^{N}}\leq \norm{\omega(0)}_{H^{N}}\exp\left(C\int_0^t \norm{Du(s)}_{L^\infty}+\norm{\omega(s)}_{L^\infty}\; ds\right).
\end{equation*}
%where $Du$ is the matrix of first order derivatives of $u$.
%Noting that $Du$ consists of repeated Riesz transforms of $\omega$, by inserting the dispersive decay estimate \eqref{apriori101} we obtain
Inserting the decay estimate \eqref{apriori101'} and choosing appropriately the constant $D$,
%with $\e_0$ small enough, (depending also on $A$) 
it follows that
\begin{equation*}
\norm{P_k f(t)}_{L^2} \leq \e_0 \ip{t}^{D\e_0} 2^{-N k^+}.
\end{equation*}
This gives us \eqref{apriori99i} and proves the bounds \eqref{maintheobounds} and \eqref{maintheodecay} %, also in view of \eqref{eq:dispest},
in our Theorem \ref{thm:maintheo}.

To conclude we remark that in proving Proposition \ref{prop:weight} we will actually prove the stronger version \eqref{MainBound100} 
of the bilinear bound \eqref{MainBound100'}.
The estimate \eqref{MainBound100} then implies that $f(t)$ is a Cauchy sequence in the $X$ space, so that \eqref{maintheoscatt} follows.
\end{proof}

\vskip10pt
\subsection{Symmetrization and Double Null Form}\label{sec:symm}
By virtue of the symmetry $\Phi(\xi,\eta) = \Phi(\xi,\xi-\eta)$ we can write the bilinear term \eqref{FSB} as
\begin{equation*}
\begin{aligned}
\mathcal{F} B(f,f)(\xi) & = \int_0^t\int_{\R^2} e^{is\Phi(\xi,\eta)}\frac{\xi\cdot\eta^\perp}{\abs{\eta}^2}\widehat{f}(\xi-\eta)\widehat{f}(\eta) d\eta ds
\\
& = \frac{1}{2} \int_0^t\int_{\R^2} e^{is\Phi(\xi,\eta)} \Big[ \frac{\xi\cdot\eta^\perp}{\abs{\eta}^2} + \frac{ \xi\cdot(\xi-\eta)^\perp}{\abs{\xi-\eta}^2} \Big]
  \widehat{f}(\xi-\eta)\widehat{f}(\eta) d\eta ds
\\
& = \frac{1}{2} \int_0^t\int_{\R^2} e^{is\Phi(\xi,\eta)} \Big[ \frac{(\xi\cdot\eta^\perp) \, \xi\cdot (\xi-2\eta)}{\abs{\eta}^2 \abs{\xi-\eta}^2} \Big]
  \widehat{f}(\xi-\eta)\widehat{f}(\eta) d\eta ds.
\end{aligned}
\end{equation*}
Here we let
\begin{align}
\label{FSBsym}
\mathfrak{m}(\xi,\eta) := \frac{1}{2}\frac{(\xi\cdot\eta^\perp) \, \xi\cdot (\xi-2\eta)}{\abs{\eta}^2 \abs{\xi-\eta}^2}
\end{align}
and explicitly write the important equality
\begin{equation}\label{FSBsymm}
\begin{aligned}
 \mathcal{F} B(f,f) &= \int_0^t\int_{\R^2} e^{is\Phi(\xi,\eta)}\frac{\xi\cdot\eta^\perp}{\abs{\eta}^2}\widehat{f}(\xi-\eta)\widehat{f}(\eta) d\eta ds
  \\
&= \int_0^t\int_{\R^2} e^{is\Phi(\xi,\eta)} \mathfrak{m}(\xi,\eta) \widehat{f}(\xi-\eta)\widehat{f}(\eta) d\eta ds.
\end{aligned}
\end{equation}

To illustrate the relevance of this symmetrization %is of such high relevance 
we remind the reader that we will treat the above expressions as oscillatory integrals. 
From this point of view, the set $\mathcal{S} = \{(\xi,\eta): \, \nabla_\eta\Phi = 0\}$ 
where no oscillations in $\eta$ occur in the phase $e^{is\Phi}$ (also called the space-resonant set)
is one of the main obstructions to obtaining strong bounds through cancellations. 
In the present problem we have
\begin{equation}\label{eq:nabla_etaPhi}
 \abs{\nabla_\eta\Phi} = \frac{\abs{\xi} \abs{\xi-2\eta}}{\abs{\xi-\eta}^2 \abs{\eta}^2},
\end{equation}
so the original multiplier $\xi\cdot\eta^\perp \abs{\eta}^{-2}$ vanishes on $\mathcal{S}$.
This is referred to as a ``null structure'' and allows one to (partially) compensate for the lack of oscillations 
(see for example \cite{K1,PusateriShatah}). However, we highlight that in our case even more is true: 
\emph{the symbol $\mathfrak{m}$ in \eqref{FSBsymm} vanishes to second order on $\mathcal{S}$}, 
which is what we call a ``double null form''. 
As we will see, this offers a crucial advantage over the previous formulation with a regular null form.

\subsubsection*{Symbol bounds}
Using the notation \eqref{defSinfty} and \eqref{mloc} we have the following basic bounds for our symbol \eqref{FSBsym}:
\begin{equation}
\label{symbound100}
\big\| \mathfrak{m}^{k,k_1,k_2} \big\| _{S^\infty} \lesssim 2^{k - \min\{k_1,k_2\}}
\end{equation}
and
\begin{equation*}%\label{symbound102}
\begin{aligned}
\big\| \mathfrak{m}^{k,k_1,k_2}(\xi,\eta) \varphi_r(\eta-2\xi) \big\|_{S^\infty} \lesssim 2^{r - \min\{k_1,k_2\}},
  \\
\big\| \mathfrak{m}^{k,k_1,k_2}(\xi,\eta) \varphi_\ell(\xi-2\eta) \big\|_{S^\infty} \lesssim 2^{\ell - \min\{k_1,k_2\}},
\end{aligned}
\end{equation*}
as well as the more precise bound
\begin{equation}\label{symbound104}
\big\| \mathfrak{m}^{k,k_1,k_2}(\xi,\eta) \varphi_\ell(\xi-2\eta) \big\|_{S^\infty} \lesssim 2^{2\ell + 2k - 2k_1- 2k_2}.
\end{equation}

\vskip10pt
\subsection{Estimate for $\partial_t f$}
As a first major consequence of the symmetrization in Section \ref{sec:symm} 
we will establish a useful estimate for the time derivative of the profile.
We will work under our main a priori assumptions \eqref{apriori99a}-\eqref{apriori100a};
in order to readily have their more precise consequences \eqref{apriori101}-\eqref{aprioriH-1}
at our disposal we refer to them as they appear in \eqref{apriori99}-\eqref{apriori100}.
%for convenience however we will refer to them as they appear in \eqref{apriori99}-\eqref{apriori100},
%together with their consequences \eqref{apriori101}-\eqref{apriori102}.

\begin{lemma}\label{Lemdsf}
Let $f$ be given by \eqref{defhatf}.
For all $m \in \{0,1,\dots\}$ and $t \in [2^m-1, 2^{m+1}] \cap [0,T]$, and under the a priori assumptions \eqref{apriori99}-\eqref{apriori100}, we have
\begin{align}
\label{Lemdsfconc}
\norm{P_k \partial_t f (t)}_{L^2} \lesssim  \e_1^2 2^k 2^{-4k_+} 2^{-2m + 10\delta m}.
\end{align}
\end{lemma}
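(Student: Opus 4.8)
\textbf{Proof plan for Lemma \ref{Lemdsf}.}

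The plan is to estimate $\partial_t f$ directly from the Duhamel formula. Differentiating \eqref{defhatf} in time kills the time integral and leaves
\[
\what{\partial_t f}(t,\xi) = \frac{1}{(2\pi)^2} \int_{\R^2} e^{it\Phi(\xi,\eta)} \mathfrak{m}(\xi,\eta)\, \what{f}(t,\xi-\eta)\what{f}(t,\eta)\, d\eta,
\]
where I have used the symmetrized form \eqref{FSBsymm} to get the double null multiplier $\mathfrak{m}$ from \eqref{FSBsym}. First I would localize the output frequency $\xi$ to $\sim 2^k$ and the inputs to $\sim 2^{k_1}$, $\sim 2^{k_2}$, and bound the resulting trilinear expression in $L^2_\xi$ by Schur/Young-type estimates in terms of the $S^\infty$ norms of $\mathfrak{m}^{k,k_1,k_2}$ and the $L^2$ and $L^\infty$ (or $L^2$ and $L^2$) norms of the localized profiles. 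The decay $\ip{t}^{-1} \sim 2^{-m}$ has to come out twice: once from the genuine dispersive decay \eqref{eq:dispest} applied to $e^{tL_1} P_{k_j} f$ (converting one profile to $\omega$ and using the $X$-norm control of the right-hand side of \eqref{eq:dispest}), and a second, less obvious, factor of $2^{-m}$ must be extracted from the oscillation $e^{it\Phi}$ via a (non-)stationary phase / integration-by-parts argument in $\eta$, which is precisely where the double null form \eqref{FSBsymm}--\eqref{symbound104} is used: the fact that $\mathfrak{m}$ vanishes to second order where $\nabla_\eta \Phi = 0$ (i.e.\ on $\{\xi - 2\eta = 0\}\cup\{\xi\cdot\eta^\perp = 0\}$, cf.\ \eqref{eq:nabla_etaPhi}) allows two integrations by parts near the space-resonant set without producing uncontrollable negative powers of $|\nabla_\eta\Phi|$.

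Concretely, I would split the $\eta$-integral into the region close to the space resonance $\mathcal{S}$ and its complement. Away from $\mathcal{S}$ one has a good lower bound on $|\nabla_\eta\Phi|$ (localized dyadically in $|\xi - 2\eta| \sim 2^\ell$), so repeated integration by parts in $\eta$ gains powers of $(2^m |\nabla_\eta\Phi|)^{-1}$; summing the dyadic pieces in $\ell$ costs at most logarithmic/polynomial factors in $m$, which is the source of the small loss $2^{10\delta m}$ (together with the $\ip{t}^{D\e_0}$ growth in \eqref{apriori99a} applied to the Sobolev-controlled input at high frequency, and the $j$-summation of the $X$-atoms at low-to-medium frequency). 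On the region $|\xi - 2\eta| \lesssim 2^\ell$ with $\ell$ small, one cannot integrate by parts, but the measure of this set is $O(2^{2\ell})$ and, more importantly, by \eqref{symbound104} the symbol itself is $O(2^{2\ell + 2k - 2k_1 - 2k_2})$ there, so the contribution is directly small; balancing these two regimes in $\ell$ (and similarly handling the other branch of $\mathcal{S}$, $\xi\cdot\eta^\perp = 0$, using the first of the two bounds after \eqref{symbound100}) yields the stated $2^{-2m}$. I would also localize the profiles spatially using the $Q_{jk}$ decomposition \eqref{Qjkdecomp} so that the weighted a priori bound \eqref{apriori100a} can be converted into the $L^1$-type or $L^2$-type bounds needed; the $2^k 2^{-4k_+}$ factor in \eqref{Lemdsfconc} tracks the $\abs{\eta}^{-2}$-type singularity of $\mathfrak{m}$ at low frequencies and the $2^{4k^+}$ high-frequency weight in the $X$-norm, and comes out of these symbol bounds combined with \eqref{eq:L2cons'} to handle the most singular low-frequency inputs.

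The main obstacle I expect is the bookkeeping near the space-resonant set $\mathcal{S}$: one must simultaneously exploit (i) the second-order vanishing of $\mathfrak{m}$ encoded in \eqref{symbound104}, (ii) the lower bound on $|\nabla_\eta\Phi|$ from \eqref{eq:nabla_etaPhi}, and (iii) the control of second $\eta$-derivatives of $\Phi$ (and of $\mathfrak{m}$) needed to make repeated integration by parts legitimate, all while keeping the loss in $m$ down to $10\delta m$ and not losing more than $\e_1^2$ in size. The anisotropy of $\Phi$ means $\nabla_\eta\Phi$ vanishes on a set of two components, $\{\xi - 2\eta = 0\}$ and the cone $\{\xi\cdot\eta^\perp = 0\}$, and these have to be treated with different choices of integration-by-parts directions — on the cone one gains from the null factor $\xi\cdot\eta^\perp$ itself rather than from $\xi - 2\eta$ — so the argument is really a careful case analysis organized by the relative sizes of $k,k_1,k_2$, of $|\xi - 2\eta| \sim 2^\ell$, and of $|\xi\cdot\eta^\perp|$. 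Everything else (the $L^2\times L^\infty$ Hölder splitting, the dispersive estimate, summation over $k_1,k_2$ using \eqref{apriori99a}) is routine once this core estimate is in place.
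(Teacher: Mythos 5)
Your plan follows the paper's proof of Lemma \ref{Lemdsf} essentially step for step: differentiate Duhamel to obtain the bilinear expression with the symmetrized symbol $\mathfrak{m}$, reduce the range of $(k,k_1,k_2)$ by crude $S^\infty$/H\"older bounds together with \eqref{aprioriH-1}, decompose dyadically in $|\xi-2\eta|\sim 2^\ell$ in the high-high case, use the quadratic symbol bound \eqref{symbound104} when $\ell$ (or $k$) is small, and otherwise split the profiles into $Q_{jk}$ atoms and either place the far-out atom in $L^2$ against a dispersive $L^\infty$ bound or integrate by parts in $\eta$ via Lemma \ref{lemIBP0}. The accounting you give for the two factors of $2^{-m}$ (one from dispersive decay, one from symbol smallness near resonance, spatial decay of the atoms, or non-stationary phase away from it) is exactly the paper's.

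One correction: the space-resonant set $\mathcal{S}=\{\nabla_\eta\Phi=0\}$ is \emph{not} $\{\xi-2\eta=0\}\cup\{\xi\cdot\eta^\perp=0\}$. By \eqref{eq:nabla_etaPhi} one has $|\nabla_\eta\Phi|=|\xi|\,|\xi-2\eta|/(|\xi-\eta|^2|\eta|^2)$, so for $\xi\neq 0$ the phase is stationary in $\eta$ only at $\xi=2\eta$; the cone $\{\xi\cdot\eta^\perp=0\}$ is where the \emph{unsymmetrized} multiplier vanishes, not where $\nabla_\eta\Phi$ does. The ``double'' null form is precisely the fact that both factors $\xi\cdot\eta^\perp$ and $\xi\cdot(\xi-2\eta)$ of $\mathfrak{m}$ vanish linearly on $\{\xi=2\eta\}$, which is what \eqref{symbound104} encodes. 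Consequently the separate branch of your case analysis ``on the cone, gaining from the null factor $\xi\cdot\eta^\perp$ rather than from $\xi-2\eta$'' addresses a phantom difficulty: away from $\{\xi=2\eta\}$ one can always integrate by parts, and no special treatment of $\{\xi\cdot\eta^\perp=0\}$ is needed (the paper performs none). This does not invalidate your argument — it only inserts an unnecessary case — but the misidentification of $\mathcal{S}$ should be fixed.
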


Notice that $\partial_t f(t)$ is a quadratic expression in $\omega(t)$ and is therefore expected to decay, in $L^2$ at least as fast as $\norm{\omega(t)}_{L^\infty}$.
The above lemma states that we actually have much more decay, almost $t^{-2}$. This is due to the favorable ``double null structure'' of the equations.
Needless to say this estimate will be very helpful when integrating by parts in time in Duhamel's formula, which gives rise to bilinear terms involving $\partial_t f$.
%Moreover, notice that the right-hand side of \eqref{Lemdsfconc} also contains a factor of $2^k$.
%This will give us some additional gain when dealing with small frequencies.

\begin{proof}[Proof of Lemma \eqref{Lemdsf}]
From \eqref{defhatf} and \eqref{FSBsymm} we have
\begin{equation*}%\label{Lemdsf1}
\partial_t \widehat{f}(t) = \mathcal{F}Q(f,f)(t,\xi)
  := \frac{1}{(2\pi)^2} \int_{\R^2} e^{it\Phi(\xi,\eta)} \mathfrak{m}(\xi,\eta) \widehat{f}(t,\xi-\eta) \widehat{f}(t,\eta) d\eta.
\end{equation*}
We start by observing that for any $f,g \in L^2$ %similarly to \eqref{Bascas1},
we have
\begin{align}
\label{LemdsfHolder}
\begin{split}
\norm{P_k Q(P_{k_1}f, P_{k_2}g)}_{L^2} \lesssim \| \mathfrak{m}^{k,k_1,k_2} \|_{S^\infty} %2^{k-\min\{k_1,k_2\}}
  \cdot \sup_{t \approx 2^m} \min\big\{ \norm{P_{k_1} f}_{L^2} \norm{e^{itL_1} P_{k_2} g}_{L^\infty}, \\ \,
  \norm{e^{itL_1} P_{k_1} f}_{L^\infty} \norm{P_{k_2} g}_{L^2}, \, \norm{P_{k_1} f}_{L^2} \norm{P_{k_2} g}_{L^2} 2^{\min\{k_1,k_2\}} \big\},
\end{split}
\end{align}
having used Lemma \ref{lem:Holdertype}.
Moreover, notice that by symmetry in $\eta \leftrightarrow \xi-\eta$, when looking at $Q(P_{k_1}f, P_{k_2}f)$ we may assume
that $k_2 \leq k_1$ without loss of generality.

Using \eqref{LemdsfHolder} and \eqref{aprioriH-1} we see that
\begin{align*}
\begin{split}
{\| P_k Q(P_{k_1}f, P_{k_2}f) \|}_{L^2} \lesssim 2^{k-k_2} {\| P_{k_1} f \|}_{L^2} {\| P_{k_2} f \|}_{L^2} 2^{k_2}
  \lesssim 2^k \cdot \e_1 2^{-Nk_1^+} 2^{k_1} \cdot \e_1 2^{k_2},
\end{split}
\end{align*}
so that the desired conclusion follows when $k_2 \leq -2m$ or $k_1 \geq \delta m$ 
(we will choose $\delta (N - 6) \geq 2$ in \eqref{param0} below).

We also have
\begin{align*}
{\| P_k Q(P_{k_1}f, P_{k_2}f) \|}_{L^2} \lesssim 2^k {\| \mathcal{F} P_k Q(P_{k_1}f, P_{k_2}f) \|}_{L^\infty} \lesssim
  2^{2k-k_2} \cdot {\| P_{k_1} f \|}_{L^2} \cdot {\| P_{k_2} f \|}_{L^2},
\end{align*}
which, in view of \eqref{aprioriH-1}, and after summing over $k_1,k_2$ with $k_2 \geq -2m$, gives the desired bound \eqref{Lemdsfconc} if $k \leq -2m$.

In what follows we can then assume
\begin{align}
\label{Lemdsfrestr1}
\min\{k,k_1,k_2\} \geq -2m, \qquad \max\{k_1,k_2\} \leq \delta m.
\end{align}
This leaves us with a summation over $(k,k_1,k_2)$ made by at most $O(m^3)$ terms, and we see that to obtain \eqref{Lemdsfconc} it will suffice to show
\begin{align}
\label{Lemdsf10}
{\| P_k Q(P_{k_1}f, P_{k_2}f) \|}_{L^2} \lesssim \e_1^2 2^k 2^{-4k^+} 2^{-2m + 9\delta m}
\end{align}
for every fixed triple $(k,k_1,k_2)$ satisfying \eqref{Lemdsfrestr1}.
We subdivide the proof of \eqref{Lemdsf10} into two main cases: high-low and high-high interactions.
%(while this is not strictly necessary, we choose to proceed in this way for the sake of clarity).

\medskip
\noindent
{\bf Case $|k_1 - k_2| \geq 10$}.
In this case we have $k_1 \geq k_2 + 10$ and $|k-k_1|\leq 5$.
We further decompose our inputs according to their spatial localization as in \eqref{FSfjkdecomp}:
\begin{align}
\label{Lemdsfjkdecomp}
 %f_1 = P_{[k_1-2,k_1+2]} \varphi_{j_1}^{(k_1)}(x)P_{k_1} f, \quad f_2 =  P_{[k_2-2,k_2+2]} \varphi_{j_2}^{(k_2)}(x)P_{k_2} f, \quad j_\nu+k_\nu \geq 0, \, \nu=1,2.
 f_1 =  Q_{j_1 k_1} f, \quad f_2 = Q_{j_2 k_2} f, \quad j_\nu+k_\nu \geq 0, \, \nu=1,2.
 \end{align}
The H\"older estimate \eqref{LemdsfHolder} and the a priori bounds \eqref{apriori100}-\eqref{apriori101} give us
\begin{align*}
\begin{split}
{\| P_k Q(f_1, f_2) \|}_{L^2} \lesssim
  2^{k-k_2} \cdot \e_1 2^{-m} \cdot \e_1 2^{-k_2} 2^{-\max\{j_1,j_2\}} \cdot 2^{-2k_1^+}. %2^{-\delta \min\{j_1,j_2\}}
\end{split}
\end{align*}
Therefore, we can obtain the desired bound whenever $\max\{j_1,j_2\} \geq (1-\delta)m - 2k_2$.
In the complementary case when $\max\{j_1,j_2\} \leq (1-\delta^2)m - 2k_2$ we can instead integrate by parts repeatedly in $\eta$.
More precisely, using
\begin{align*}
|\nabla_\eta \Phi| \approx 2^{-2k_2},\qquad | D_\eta^\alpha \Phi| \lesssim 2^{-(1+|\alpha|)k_2}.
\end{align*}
we can apply the bound \eqref{IBP02} in Lemma \ref{lemIBP0} with $K = s2^{-2k_2}$, $F = 2^{2k_2} \Phi$, $\epsilon = 2^{k_2}$,
and $g = \mathfrak{m}(\xi,\eta) \widehat{f_1}(\xi-\eta) \widehat{f_2}(\eta)$, and obtain
\begin{align*}
{\| P_k Q(f_1, f_2) \|}_{L^2} & \lesssim 2^k {\| \varphi_k (\xi) \what{Q}(f_1, f_2)(\xi) \|}_{L^\infty_\xi}
\\
& \lesssim 2^k \cdot (2^m 2^{-k_2})^{-M} \big( 1 + 2^{k_2} 2^{\max\{j_1,j_2\}} \big)^M \cdot 2^{k-k_2} {\| f_1 \|}_{L^2} {\| f_2 \|}_{L^2}
\\
& \lesssim \e_1^2 2^{-5m} {\| f_1 \|}_{L^2} {\| f_2 \|}_{L^2},
\end{align*}
where the last inequality follows by choosing $M$ large enough.
Using also \eqref{Lemdsfrestr1} we see that this is more than sufficient to obtain \eqref{Lemdsfconc}.

\medskip
\noindent
{\bf Case $|k_1 - k_2| < 10$}.
This case is more delicate and requires a further frequency space decomposition in the size of $|\xi-2\eta|$.
More precisely, we let
\begin{align*}%\label{Lemdsf15}
\mathcal{F} Q_\ell(f,g)(t,\xi) := \int_{\R^2} e^{it\Phi(\xi,\eta)} \mathfrak{m}(\xi,\eta) \varphi_\ell(\xi-2\eta) \widehat{f}(t,\xi-\eta) \widehat{g}(t,\eta) d\eta.
\end{align*}
Notice that this vanishes unless $\ell \leq k_1 + 20$.
To obtain \eqref{Lemdsf10} it then suffices to show
\begin{align}
\label{Lemdsf20}
\sum_{\ell \leq k_1 + 20} {\| P_k Q_\ell(P_{k_1}f, P_{k_2}f) \|}_{L^2} \lesssim \e_1^2 2^k 2^{-4k^+} 2^{-2m + 9\delta m}.
\end{align}

%\subsubsection*{Subcase $\min\{k,\ell\} \leq (-1/2+5\delta)m$}
\smallskip
\subsubsection*{Subcase $\min\{k,\ell\} \leq (-1+5\delta)m + k_1$}
In this case we first use the $L^2\times L^\infty$ H\"older bound in Lemma \ref{lem:Holdertype}
together with the symbol bound \eqref{symbound104}, and the usual a priori estimates \eqref{apriori100}-\eqref{apriori101}, to deduce
\begin{align}
\label{Lemdsf25}
\begin{split}
2^{4k^+} {\| P_k Q_\ell (P_{k_1}f, P_{k_2}f) \|}_{L^2} \lesssim 2^{2\min\{k,\ell\}-k_1-k_2} \cdot \e_1  2^{(2-\delta)k_1} 2^{-m} \cdot \e_1 2^{k_2},
\end{split}
\end{align}
having also used \eqref{aprioriH-1}.
This suffices to obtain the desired bound when the sum in \eqref{Lemdsf20} is over $\ell \leq -m + k_1 + 5\delta m$ %$\ell \leq (-1/2+5\delta)m$,
or when $k \leq -m + k_1 + 5\delta m$.

We are now left with $O(m)$ terms in the sum in \eqref{Lemdsf20}, so that it suffices to show
\begin{align}
\label{Lemdsf30}
2^{4k^+} {\| P_k Q_\ell(P_{k_1}f, P_{k_2}f) \|} \lesssim \e_1^2 2^k 2^{-2m + 8\delta m},
\end{align}
under the restrictions \eqref{Lemdsfrestr1}, $|k_1-k_2| \leq 10$ and $(-1+5\delta)m + k_1 \leq k,\ell \leq k_1 + 20$.
We now further decompose our profiles in space, letting
\begin{align*}
 P_k Q_\ell(P_{k_1}f, P_{k_2}f) = \sum_{j_1,j_2} P_k Q_\ell(f_1, f_2),
\end{align*}
with the notation \eqref{Lemdsfjkdecomp}.

%\smallskip
%\noindent
%{\it Subcase $\max\{j_1,j_2\} \geq \min\{\ell,k\} + (1-5\delta)m$}.
\smallskip
\subsubsection*{Subcase $\max\{j_1,j_2\} \geq (1-4\delta)m - k_1 + \min\{\ell,k\}$}
In this case we use the H\"older estimate in Lemma \ref{lem:Holdertype} with the symbol bound \eqref{symbound104} to get
\begin{align*}
\begin{split}
{\| P_k Q_\ell(f_1, f_2) \|}_{L^2} \lesssim 2^{2\min\{k,\ell\} - 2k_1} \cdot
  \sup_{t \approx 2^m} \min \big\{ {\|f_1\|}_{L^2} {\|e^{itL_1} f_2\|}_{L^\infty}, {\|e^{itL_1} f_1\|}_{L^\infty} {\|f_2\|}_{L^2} \big\}.
\end{split}
\end{align*}
The a priori bounds \eqref{apriori100}-\eqref{apriori101} then give us
\begin{equation*}
\begin{aligned}
2^{4k^+} {\| P_k Q_\ell(f_1, f_2) \|}_{L^2}
  &\lesssim 2^{2\min\{k,\ell\}-2k_1} \cdot \e_1  2^{-m} 2^{(2-\delta)k_1} \cdot \e_1 2^{-k_1} 2^{-\max\{j_1,j_2\}}
\\ &\lesssim \e_1^2 2^k \cdot 2^{-\delta k_1} \cdot 2^{- m - k_1 + \min\{k,\ell\} - \max\{j_1,j_2\}},
\end{aligned}
\end{equation*}
which, upon summation over $j_1,j_2$, suffices to obtain \eqref{Lemdsf30} under the current assumptions. %$\max\{j_1,j_2\} \geq (1-4\delta)m - k_1 + \min\{\ell,k\}$.

%\smallskip
%\noindent
%{\it Subcase $\min\{k,\ell\} \geq (-1/2+5\delta)m$ and $\max\{j_1,j_2\} \leq \min\{\ell,k\} + (1-5\delta)m$}.
\smallskip
\subsubsection*{Subcase $\max\{j_1,j_2\} \leq (1-4\delta)m - k_1 + \min\{\ell,k\}$ and $\min\{k,\ell\} \geq (-1+5\delta)m + k_1$}
In this last remaining case we want to resort again to repeated integration by parts through Lemma \ref{lemIBP0}.
%using the formula $$e^{it\Phi} = \frac{\nabla_\eta \Phi}{t|\nabla_\eta \Phi|^2} \cdot \nabla_\eta e^{it\Phi}.$$

Before doing that, let us first look at the case $\ell \leq k+5$.
Notice that if $\ell \leq -m/2 + (3/2)k_1 + \delta m$, then the H\"older estimate \eqref{Lemdsf25} already gives us the desired conclusion.
We can then assume $\ell \geq -m/2 + (3/2)k_1 + \delta m$ in what follows.
On the support of $P_k Q_\ell(f_1, f_2)$ we have, see \eqref{nabla_xiPhi},
\begin{align*}%\label{Lemdsf35}
\begin{split}
\big| \nabla_\eta \Phi \big| 
  \approx 2^{\ell} 2^{-3k_1},
\qquad  \big| D_\eta^{\alpha} \Phi \big| \lesssim 2^{-(1+|\alpha|) k_1}, \quad |\alpha|\geq 2.
\end{split}
\end{align*}
We then let
\begin{align*}
K = s 2^{\ell} 2^{-3k_1}, \quad F(\eta) = \Phi(\xi,\eta) (2^{\ell} 2^{-3k_1})^{-1}
\end{align*}
and calculate
\begin{align*}
|D^{\alpha} F| \lesssim (2^{\ell} 2^{-3k_1})^{-1} 2^{-(1+|\alpha|) k_1} %\lesssim (2^{\min\{k,\ell\}} 2^{-k_1})^{-1} 2^{(1-|\alpha|) k_1}
  \lesssim 2^{(1-|\alpha|)\ell}, \quad |\alpha|\geq 2.
\end{align*}
Choosing $\epsilon = 2^{\ell}$, and
$g = \mathfrak{m}(\xi,\eta) \varphi_\ell(\xi-2\eta) \widehat{f_1}(\xi-\eta) \widehat{f_2}(\eta)$, the bound \eqref{IBP02} in Lemma \ref{lemIBP0} gives us
\begin{equation*}
\begin{aligned}
 \norm{P_k Q_\ell(f_1, f_2)}_{L^2} &\lesssim  (2^m 2^{\ell} 2^{-3k_1})^{-M} \big( 2^{-\ell} + 2^{\max\{j_1,j_2\}} \big)^M
  {\| f_1 \|}_{L^2} {\| f_2 \|}_{L^2}
  \\ &\lesssim 2^{-10m} {\| f_1 \|}_{L^2} {\| f_2 \|}_{L^2},
\end{aligned}
\end{equation*}
which is more than enough.

Finally we look at the case $k \leq \ell-5$. Recall that we may assume $k \geq -m + k_1 + 5\delta m$.
%and notice that if $k - 2k_1 \leq -(1-\delta)m$, then \eqref{Lemdsf25} already implies the desired conclusion \eqref{Lemdsf30}.
In the present configuration we have
\begin{align*}
\begin{split}
\big| \nabla_\eta \Phi \big| \approx 2^{k} 2^{-3k_1},
\qquad  \big| D_\eta^{\alpha} \Phi \big| \lesssim 2^{-(2+|\alpha|) k_1} 2^k, \quad |\alpha|\geq 2.
\end{split}
\end{align*}
We can then apply Lemma \ref{lemIBP0} with
$K = s 2^{k} 2^{-3k_1}, F(\eta) = \Phi(\xi,\eta) (2^{k} 2^{-3k_1})^{-1}$, $\epsilon = 2^{k_1}$, and the same choice of $g$ as above,
to obtain ${\| P_k Q_\ell(f_1, f_2) \|}_{L^2}\lesssim 2^{-5m} {\| f_1 \|}_{L^2} {\| f_2 \|}_{L^2}$.
This concludes the proof of the lemma.
\end{proof}

\vskip20pt
\section{Preliminary Bounds and Finite Speed of Propagation}\label{sec:prelim+FS}
Recall that our aim is to prove Proposition \ref{prop:weight}.
%To begin, we employ the above partition of unity to localize our time parameter on scales $2^m$, $m\in\N$, by letting
We begin by localizing our time parameter on scales $\approx 2^m$, $m \in \N$ as follows.
Given $t\in[0,T]$, we choose a suitable decomposition of the indicator function $\mathbf{1}_{[0,t]}$
by fixing functions $\tau_0,\ldots,\tau_{L+1} : \R \to [0,1]$, $\abs{L-\log_2(2+t)}\leq 2$ with the properties
\begin{align*}%\label{timedecomp}
\begin{split}
& \mathrm{supp} \,\tau_0 \subseteq [0,2], \quad \mathrm{supp} \,\tau_{L+1}\subseteq [t-2,t],
  \quad \mathrm{supp}\,\tau_m\subseteq [2^{m-1},2^{m+1}]
  \quad \text{for} \quad  m \in \{1,\dots,L\},
\\
& \sum_{m=0}^{L+1}\tau_m(s) = \mathbf{1}_{[0,t]}(s), \quad \tau_m\in C^1(\R) \quad \text{and} \quad \int_0^t|\tau'_m(s)|\,ds\lesssim 1
  \quad \text{for} \quad m\in \{1,\ldots,L\}.
\end{split}
\end{align*}
We can then decompose
\begin{align*}%\label{B_m}
\begin{split}
B(f,f) = \sum_{m} B_m(f,f),
\qquad \mathcal{F} B_m(f,f):= \int_0^t \tau_m(s) \int_{\R^2} e^{is\Phi(\xi,\eta)} \mathfrak{m}(\xi,\eta) \widehat{f}(\xi-\eta)\widehat{f}(\eta) d\eta ds.
\end{split}
\end{align*}
To obtain Proposition \ref{prop:weight} it will then suffice to show that for any $m =0,1,\dots$
\begin{equation}
\label{MainBound100}
 2^{4k^+} 2^{(k+j)(1+\delta)} \norm{Q_{jk} B_m(f,f)}_{L^2} \lesssim \e_1^2 2^{-\delta^3 m}.
\end{equation}

For convenience we recall here the a priori bounds \eqref{apriori99a}-\eqref{apriori100a},
\begin{align}
\label{apriori99}
\norm{P_k f(t)}_{L^2} &\leq \e_1 \ip{t}^{p_0} 2^{-N_0 k^+}, %\quad  C>0, %no C here
\\
\label{apriori100}
\sup_{(k,j)\in\mathcal{J}} \big( 2^{k+j} \big)^{1+\delta} 2^{4k^+} \norm{Q_{jk}f(t)}_{L^2} &\leq \e_1,
\end{align}
where we can choose $p_0 = C\e_0 \leq \delta$, for a suitable absolute constant $C>0$.
Then we also have the following consequences of \eqref{apriori99}-\eqref{apriori100}:
\begin{align}
\label{apriori101}
 &\norm{e^{itL_1} Q_{jk}f(t)}_{L^\infty}
 %\norm{e^{itL_1} f_{jk}(t)}_{L^\infty}
 %\lesssim \ip{t}^{-1}2^{3k}\norm{Q_{jk}f(t)}_{L^1} \lesssim \ip{t}^{-1}2^{3k} 2^j \norm{Q_{jk}f(t)}_{L^2} \\
 \lesssim \e_1\ip{t}^{-1}2^{-4k^+}2^{(2-\delta)k}2^{-\delta j},
\\
\label{apriori102}
 & {\| \widehat{Q_{jk}f} \|}_{L^\infty} \leq \norm{Q_{jk}f}_{L^1} \lesssim \e_1 2^{-(1+\delta) k} 2^{-4k^+} 2^{-\delta j}.
\end{align}
Also recall that by virtue of \eqref{eq:L2cons'} we have
\begin{equation}\label{aprioriH-1}
 2^{-k}\norm{P_k f}_{L^2}\lesssim \big\| \abs{\nabla}^{-1} f \big\|_{L^2}= \big\|\abs{\nabla}^{-1} \omega\big\|_{L^2} \lesssim \norm{u}_{L^2} \leq \e_0.
\end{equation}

In the remainder of this section we begin our proof of the weighted estimate \eqref{MainBound100} 
by treating first some ranges of parameters for which the estimates are easily seen to hold. 
Subsequently we present a ``finite speed of propagation'' argument, which invokes the idea that %(in this dispersive problem)
each frequency is expected to travel at its respective group velocity, in order to allow for a further reduction in the parameters to be considered.

\subsection{Basic Cases}
We first establish a simple lemma dealing with frequencies that are very large or very small with respect to the relevant parameters.
To this end we let
\begin{equation}\label{param0}
 N^\prime:= N-6, \quad N^\prime \geq \frac{2}{\delta}.
\end{equation}

\begin{lemma}[Basic Cases]%\label{Basiccaseslemma}
With the above notation and under the a priori assumptions \eqref{apriori99}-\eqref{apriori101} we have
\begin{align}
\label{Basiccases1}
\sum_{\max\{k_1,k_2\} \geq (k + j + \delta m)/N^\prime}
  2^{4k^+} 2^{(1+\delta)(k+j)} {\| Q_{jk} B_m( P_{k_1}f, P_{k_2}f) \|}_{L^2} \lesssim 2^{-\delta^3 m} \e_1^2.
\end{align}
Moreover,
\begin{align}
\label{Basiccases2}
\sum_{\min\{k_1,k_2\} \leq - 1.01(k + j + \delta m)}
  2^{4k^+} 2^{(1+\delta)(k+j)} {\| Q_{jk} B_m( P_{k_1}f, P_{k_2}f) \|}_{L^2} \lesssim 2^{-\delta^3 m} \e_1^2.
\end{align}
\end{lemma}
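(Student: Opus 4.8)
The plan is to prove the two bounds \eqref{Basiccases1} and \eqref{Basiccases2} by separately treating the two extreme frequency regimes, using only the crudest available tools: the H\"older-type product estimate from Lemma \ref{lem:Holdertype}, the symbol bound \eqref{symbound100}, the $L^2$-conservation consequence \eqref{aprioriH-1}, and the basic decay \eqref{apriori101}. The point is that in both regimes the a priori decay in the relevant high or low frequency is strong enough to beat the at most $2^{(1+\delta)(k+j)}2^{4k^+}$ loss coming from the definition of the $X$-norm, so no oscillatory integration by parts is needed.

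For \eqref{Basiccases1}, I would first note that on the support of $Q_{jk}B_m(P_{k_1}f,P_{k_2}f)$ we may assume $\max\{k_1,k_2\}\geq k-2$ (otherwise the output frequency is not $\approx 2^k$), and that the sum is over $k_1$ or $k_2$ at least $(k+j+\delta m)/N'$. By symmetry assume $k_1=\max\{k_1,k_2\}$, so $|k-k_1|\leq 5$ (high-high-to-high is impossible at the frequency of size $2^{k_1}$ unless we allow $k\le k_1$, which is fine — the multiplier just needs $k\leq\max\{k_1,k_2\}+O(1)$). Using \eqref{LemdsfHolder}-type estimates, namely the $L^2\times L^\infty$ H\"older bound with the symbol bound $\|\mathfrak{m}^{k,k_1,k_2}\|_{S^\infty}\lesssim 2^{k-\min\{k_1,k_2\}}$, together with $\|P_{k_1}f\|_{L^2}\lesssim\e_1\ip{t}^{p_0}2^{-N_0 k_1^+}$ from \eqref{apriori99} and the $\ip{t}^{-1}$ decay of the $L^\infty$ norm of the other factor from \eqref{apriori101}, one gets after the $s$-integral (which costs $2^m$) a gain of roughly $2^m\cdot 2^{-m}\cdot 2^{-(N_0-2)k_1^+}\approx 2^{-(N_0-2)k_1^+}$. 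Since $k_1^+\geq k_1\geq(k+j+\delta m)/N'$ and $N'=N-6$ with $N\geq 2.1\delta^{-1}$, the exponential decay $2^{-(N_0-2)k_1}$ dominates $2^{(1+\delta)(k+j)+4k^+}2^{\delta m}$ with room to spare, yielding the $2^{-\delta^3 m}$; the sum over the remaining finitely-many-per-scale $(k_1,k_2)$ and the geometric tail in $k_1$ converge. One should also handle the case $k_1$ large but $k$ much smaller (a genuine high-high interaction producing a low output): there $2^{4k^+}=1$, the $S^\infty$ norm is $2^{k-k_2}$, and the two $L^2$ factors give $\e_1^2 2^{-2N_0 k_1^+}$, which again crushes everything.

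For \eqref{Basiccases2}, the small-frequency regime, say $k_2=\min\{k_1,k_2\}\leq -1.01(k+j+\delta m)$, the dangerous factor is the singular symbol $2^{k-k_2}=2^{k+|k_2|}$ and the low-frequency input $P_{k_2}f$. Here I would use the H\"older variant that puts the low-frequency factor in $L^2$ with the extra $2^{\min\{k_1,k_2\}}=2^{k_2}$ gain (the third option in \eqref{LemdsfHolder}), which exactly cancels the symbol singularity, and then invoke \eqref{aprioriH-1} to bound $2^{-k_2}\|P_{k_2}f\|_{L^2}\lesssim\e_0$, i.e.\ $\|P_{k_2}f\|_{L^2}\lesssim\e_0 2^{k_2}$. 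Combined with $\|P_{k_1}f\|_{L^2}\lesssim\e_1$ (using \eqref{apriori100} or \eqref{apriori99}, noting $k_1\approx k$ or $k_1$ large) and the $s$-integral cost $2^m$, one obtains a bound of order $2^m\,\e_1\,\e_0\,2^{2k_2}2^{k-k_2}=2^m\e_1\e_0 2^{k+k_2}$; since $k_2\leq -1.01(k+j+\delta m)$ this is $\lesssim 2^m\e_1\e_0 2^{-0.01(k+j)}2^{-1.01\delta m}2^{-0.01 k}$, wait — one has to be slightly more careful and instead use the time-decayed $L^\infty$ bound on the high factor rather than $\|P_{k_1}f\|_{L^2}$, replacing the $2^m$ by $2^m\cdot 2^{-m}=1$: then the estimate is $\lesssim \e_1\e_0 2^{k+k_2}2^{-4k_1^+}\lesssim \e_1\e_0 2^{k}2^{-1.01(k+j+\delta m)}$, and after multiplying by $2^{4k^+}2^{(1+\delta)(k+j)}$ and summing the geometric series in $k_2$ one is left with $\e_1\e_0 2^{4k^+}2^{(1+\delta)(k+j)-1.01(k+j)}2^{-1.01\delta m}$, whose $(k+j)$-exponent $(1+\delta)-1.01<0$ gives decay in $k+j$ and whose $m$-exponent beats $\delta^3 m$; the $2^{4k^+}$ is controlled since in this regime the relevant $k$ cannot be large while $k+j$ stays bounded (when $k^+>0$ one has $k+j\geq k\gg 1$ and the negative power of $2^{k+j}$ wins).

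The main obstacle I anticipate is purely bookkeeping: making sure that in \eqref{Basiccases2} the singular symbol $2^{k-\min\{k_1,k_2\}}$ is always matched by a compensating factor, which forces one to commit to the "$L^2\times L^2$ with $2^{\min}$-gain" H\"older option for the low-frequency branch, and then to check that after using \eqref{aprioriH-1} the remaining budget still absorbs the weight $2^{4k^+}2^{(1+\delta)(k+j)}$ uniformly — in particular verifying that the constraint $\min\{k_1,k_2\}\leq-1.01(k+j+\delta m)$ really does force $k+j$ (and hence $k^+$) to be not too large relative to $m$, so that the $2^{4k^+}$ never spoils the $2^{-\delta^3 m}$ gain. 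Everything else is a routine interpolation between the $L^2$ conservation law and the linear dispersive decay, summed against geometrically decaying tails.
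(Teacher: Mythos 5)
Your treatment of \eqref{Basiccases1} is essentially the paper's: the $L^2\times L^\infty$ H\"older bound with the symbol estimate \eqref{symbound100}, the Sobolev bound \eqref{apriori99} on the high-frequency factor and the dispersive decay \eqref{apriori101} on the other, is exactly how the paper proceeds, and that argument already covers the high--high--to--low configuration (the $L^\infty$ factor still contributes $2^{-m}2^{(2-\delta)k_2}$ there, and $2^{-Nk_1}$ still dominates all weights). Your separate treatment of that configuration ``with two $L^2$ factors'' is, however, wrong as stated: without any dispersive decay the time integral costs $2^m$, and since the constraint only forces $k_1\gtrsim \delta m/N^\prime$, the gain $2^{-2Nk_1^+}\approx 2^{-2\delta m}$ does not come close to beating $2^m$. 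This subcase is redundant rather than fatal, because your main estimate applies to it verbatim.

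The genuine gap is in \eqref{Basiccases2}. Your final pairing puts the high-frequency factor $P_{k_1}f$ in $L^\infty$ (gaining $2^{-m}$) and the low-frequency factor $P_{k_2}f$ in $L^2$ via \eqref{aprioriH-1} (gaining one power $2^{k_2}$). The symbol costs $2^{k-k_2}$, so that single power of $2^{k_2}$ is exactly consumed by the singularity and \emph{nothing} survives: this pairing gives $\e_1\e_0\,2^{k}2^{(2-\delta)k_1}2^{-4k_1^+}$, not the $\e_1\e_0\,2^{k+k_2}2^{-4k_1^+}$ you write down --- the extra $2^{k_2}$ you carry over belongs to the $L^2\times L^2$ Bernstein option, which you had just abandoned precisely because it cannot kill the $2^m$. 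With no surviving positive power of $2^{k_2}$, the hypothesis $k_2\leq-1.01(k+j+\delta m)$ yields no gain at all and the sum over $k_2$ diverges. The correct pairing is the opposite one, used in the paper: put the \emph{low}-frequency factor in $L^\infty$, where \eqref{apriori101} supplies $\e_1 2^{-m}2^{(2-\delta)k_2}$ --- i.e.\ essentially two powers of $2^{k_2}$, enough to absorb the $2^{-k_2}$ from the symbol and leave $2^{(1-\delta)k_2}$ --- and put the high-frequency factor in $L^2$ via \eqref{aprioriH-1}, giving $\e_1 2^{k_1}2^{-4k_1^+}$. The surviving factor $2^{(1-\delta)k_2}\leq 2^{-(1-\delta)\cdot 1.01(k+j+\delta m)}$ then beats the weight $2^{(1+\delta)(k+j)}$ exactly because $(1-\delta)\cdot 1.01>1+\delta$ for small $\delta$ (this is what the $1.01$ in the statement is for), and the $2^{4k^+}$ is absorbed by $2^{-4k_1^+}$ since $k\leq k_1+O(1)$.
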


\begin{proof}
We begin by using an $L^2 \times L^\infty$ estimate, see Lemma \ref{lem:Holdertype},
together with the symbol bound \eqref{symbound100}, to deduce that
\begin{equation}
\label{Bascas1}
\begin{aligned}
 \norm{Q_{jk} B_m( P_{k_1}f, P_{k_2}f)}_{L^2} &\lesssim  2^m \cdot 2^{k-\min\{k_1,k_2\}}
  \cdot \sup_{t \approx 2^m} \min \big\{ {\| P_{k_1} f \|}_{L^2} {\| e^{itL_1} P_{k_2} f \|}_{L^\infty},
  \\
  &\qquad {\| e^{itL_1} P_{k_1} f \|}_{L^\infty} {\| P_{k_2} f \|}_{L^2}, \, {\| P_{k_1} f \|}_{L^2} {\| P_{k_2} f \|}_{L^2} 2^{\min\{k_1,k_2\}} \big\}.
\end{aligned}
\end{equation}

\medskip
\noindent
{\it Proof of \eqref{Basiccases1}.}
Without loss of generality, let us assume $k_2 \leq k_1$, so that the sum is over $k_1 \geq (k + j + \delta m)/N^\prime$.
Using the bound in the high Sobolev norm \eqref{apriori99}, the a priori decay assumption \eqref{apriori101},
and the estimate \eqref{Bascas1} above, we see that
\begin{align*}
{\| Q_{jk} B_m( P_{k_1}f, P_{k_2}f) \|}_{L^2} \lesssim  2^m \cdot 2^{k-k_2} \cdot \e_1 2^{-m} 2^{(2-\delta)k_2} 2^{-4k_2^+}
  \cdot \e_1 2^{p_0 m} 2^{-N k_1}.
%\\  \lesssim \e_1^2 2^{p_0 m} 2^{-(N-1) k_1}
\end{align*}
It follows that
\begin{align*}
\sum_{k_1 \geq (k + j + \delta m)/N^\prime, k_2}\hspace{-5pt} 2^{4k^+} 2^{(1+\delta)(k+j)}
  {\| Q_{jk} B_m( P_{k_1}f, P_{k_2}f) \|}_{L^2} \lesssim 2^{(1+\delta)(k+j)} \cdot \e_1^2 2^{p_0 m} 2^{-(N-5) (k + j + \delta m)/N^\prime}.
\end{align*}
Since $(N-5)/N^\prime \geq 1+\delta$ and $p_0 \leq \delta$ this is sufficient.

\medskip
\noindent
{\it Proof of \eqref{Basiccases2}.}
Again, without loss of generality we assume $k_2 \leq k_1$, so that the sum is over $k_2 \leq -1.01(k + j + \delta m)$.
Using the estimate \eqref{Bascas1} above, the a priori bounds \eqref{apriori100}, \eqref{apriori101} and \eqref{aprioriH-1}, we see that
\begin{align*}
{\| Q_{jk} B_m( P_{k_1}f, P_{k_2}f) \|}_{L^2} \lesssim  2^m \cdot 2^{k-k_2} \cdot \e_1 2^{-m} 2^{(2-\delta)k_2}
  \cdot \e_1 2^{k_1} 2^{-4 k_1^+}.
\end{align*}
It follows that
\begin{align*}
\sum_{k_2 \leq -1.01(k + j + \delta m), k_1} 2^{4k^+} 2^{(1+\delta)(k+j)}
  {\| Q_{jk} B_m( P_{k_1}f, P_{k_2}f) \|}_{L^2} \lesssim 2^{(1+\delta)(k+j)} \cdot \e_1^2 2^{-(1-\delta)1.01(k + j + \delta m)}
\end{align*}
which is sufficient for $\delta \leq 1/1000$.
\end{proof}

As a consequence of the above lemma we can assume from now on that
\begin{align}
\label{Basiccasescons}
\max\{k_1,k_2\} %\leq \dfrac{k + j + \delta m}{N^\prime}
\leq \frac{\delta}{2}(k + j + \delta m), \qquad \min\{k_1,k_2\} \geq - 1.01(k + j + \delta m)
\end{align}
and, in particular,
\begin{align}
\label{Basiccasescons'}
\max\{k,k_1,k_2\}\leq \delta (j+\delta m) + D.
\end{align}
where $D$ is a suitably large constant.
From now on we will use $D$ to denote an absolute constant that needs to be chosen large enough 
in the course of our proof so to verify several inequalities.
In view of \eqref{Basiccasescons}-\eqref{Basiccasescons'}, 
when decomposing our inputs into frequencies, summations are given by at most $O((j+m)^2)$ terms.

\medskip
\subsection{Finite Speed of Propagation}%\label{sec:fin_sop}
From \eqref{defPhi} one computes
\begin{align}\label{nabla_xiPhi}
 \abs{\nabla_\xi\Phi} = \frac{\abs{\eta} \abs{\eta-2\xi}}{\abs{\xi-\eta}^2 \abs{\xi}^2},
\qquad \abs{\nabla_\eta\Phi} = \frac{\abs{\xi} \abs{\xi-2\eta}}{\abs{\xi-\eta}^2 \abs{\eta}^2}.
\end{align}
Notice that applying a weight $x$ to the bilinear term $B(f,f)$ corresponds to differentiating in $\xi$ its Fourier transform, i.e.
the expression in \eqref{FSB}.
The main contribution from this can be expected to be the term where the $\xi$-derivative hits the oscillating phase, producing a
factor of $s \nabla_\xi \Phi$.
We then want to make this statement precise by proving that if the bilinear term $B(f,f)$ is restricted to locations $|x| \approx 2^j$,
then we must have ``$\abs{x} \lesssim s \abs{\nabla_\xi\Phi}$'', %that is $2^j \lesssim 2^m 2^{k_2 + r - 2k_1-2k}$ where $2^r \approx |\eta -2\xi|$.
that is, we should expect to have $2^j \lesssim 2^m 2^{-2\min\{k,k_2,k_2\}}$.
Later on in Section \ref{sec:weightI} we will also use refinements of this statement in various scenarios.

\medskip
%We now state the main Lemma of this section:
\begin{lemma}[Finite speed of propagation]\label{FSlemma}
Assume that \eqref{Basiccasescons'} holds and
\begin{align}
\label{FSassumption}
j \geq m - 2\min\{k,k_1,k_2\} + D^2,
\end{align}
then we have the bound
\begin{align}
\label{FSconclusion}
2^{4k^+} 2^{(k+j)(1+\delta)} \norm{Q_{jk} B_m (P_{k_1}f, P_{k_2}f)}_{L^2} \lesssim 2^{-\delta^2 (m+j)} \e_1^2.
\end{align}
\end{lemma}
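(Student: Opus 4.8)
\textbf{Proof plan for Lemma \ref{FSlemma} (Finite speed of propagation).}

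The plan is to exploit the fact that under \eqref{FSassumption} the frequency $\eta$ is constrained to a region where $\nabla_\xi\Phi$ is much smaller than $2^j/2^m$, so that integrating by parts in $\xi$ in the Fourier representation \eqref{FSB} of $B_m(f,f)$ yields arbitrarily large gains. Concretely, write $Q_{jk} B_m(P_{k_1}f,P_{k_2}f)$ via its Fourier transform as in \eqref{FSB}, localized by $\varphi_k(\xi)$ in frequency and by $\varphi_j^{(k)}(x)$ in physical space. Applying the weight $\varphi_j^{(k)}(x)$ corresponds, on the Fourier side, to testing against (roughly) $|x|\approx 2^j$, which we convert into a $\xi$-derivative statement: each $\partial_\xi$ acting on the oscillatory factor $e^{is\Phi(\xi,\eta)}$ produces a factor $is\nabla_\xi\Phi$, and on the support of our cutoffs one has, by \eqref{nabla_xiPhi} and \eqref{Basiccasescons'}, the bounds $|\nabla_\xi\Phi|\lesssim 2^{-2\min\{k,k_1,k_2\}+D}$ and $|D_\xi^\alpha\Phi|\lesssim 2^{-(1+|\alpha|)\min\{k,k_1,k_2\}+C|\alpha|D}$ for $|\alpha|\geq 1$ (and similarly for the symbol $\mathfrak{m}$, using the symbol bounds \eqref{symbound100} and the $S^\infty$-algebra properties). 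This places us in a position to apply the integration-by-parts Lemma \ref{lemIBP0}: choosing $K = 2^m \cdot 2^{-2\min\{k,k_1,k_2\}+D}$, $F = \Phi$ rescaled to have $O(1)$-size gradient on the relevant region, and $\epsilon$ commensurate with $2^{\min\{k,k_1,k_2\}}$, the assumption \eqref{FSassumption} (i.e. $2^j \gg 2^m 2^{-2\min\{k,k_1,k_2\}}$, with the constant $D^2$ absorbing the loss from $D$-sized factors and from the cutoffs) guarantees that $2^j K^{-1} \leq 2^{-D^2/2}$ or so, and hence that each integration by parts gains a genuine power $2^{-cD}$; iterating $M$ times gives a gain of $2^{-MD}$, or equivalently a factor $(2^j 2^{-m} 2^{2\min\{k,k_1,k_2\}})^{-M}$ times polynomial corrections in $2^{\max j_\nu}$.

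First I would reduce to a single frequency triple $(k,k_1,k_2)$ satisfying \eqref{Basiccasescons'} (so that the frequency summation costs only $O((j+m)^2)$, harmless against the exponential gain), and further decompose the two inputs into spatial atoms $f_\nu = Q_{j_\nu k_\nu}f$ as in \eqref{Lemdsfjkdecomp}. Then I would dyadically split the output region $|x|\approx 2^j$ and note that, since the inputs live at locations $\lesssim 2^{\max\{j_1,j_2\}}$, the convolution structure forces the relevant contribution to satisfy $2^j \lesssim 2^m 2^{-2\min\{k,k_1,k_2\}} + 2^{\max\{j_1,j_2\}+C}$: indeed, either the input positions already produce the output position (handled by crude Cauchy–Schwarz plus the a priori bounds \eqref{apriori99}--\eqref{apriori102}), or else the displacement must come from the group-velocity term $s\nabla_\xi\Phi$, which is the integration-by-parts regime above. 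In the first subcase, when $\max\{j_1,j_2\}$ is comparable to or larger than $j$, I would simply estimate $\|Q_{jk}B_m(f_1,f_2)\|_{L^2}$ by $2^m \cdot \|\mathfrak{m}^{k,k_1,k_2}\|_{S^\infty} \cdot \|f_1\|_{L^2}\|e^{isL_1}f_2\|_{L^\infty}$ (or the symmetric choice), plug in \eqref{apriori100}--\eqref{apriori101}, and observe that the weight $2^{(1+\delta)(k+j)}$ is more than compensated by $2^{-(1+\delta)\max\{j_1,j_2\}}$ coming from the $X$-norm/decay of the input, together with \eqref{Basiccasescons'}; this yields the gain $2^{-\delta^2(m+j)}$. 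In the complementary subcase $\max\{j_1,j_2\} \leq j - D$, the integration-by-parts argument of the previous paragraph applies and produces an even stronger (essentially $2^{-MD}$) gain, easily absorbing all the $2^{4k^+}2^{(1+\delta)(k+j)}$ factors using \eqref{Basiccasescons'} to bound $k^+, k+j$ linearly in $j+m$, and the a priori $L^2$ bounds \eqref{apriori99} for $\|f_1\|_{L^2}\|f_2\|_{L^2}$.

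The main obstacle I anticipate is bookkeeping rather than conceptual: carefully verifying that the derivatives of $\Phi$ and of the localized symbol $\mathfrak{m}^{k,k_1,k_2}$ obey the hypotheses of Lemma \ref{lemIBP0} on the correct region — in particular tracking the interplay between the three scales $k, k_1, k_2$ in the denominators of $\nabla_\xi\Phi$ in \eqref{nabla_xiPhi} and making sure that the constant $D^2$ in \eqref{FSassumption} genuinely dominates the cumulative $D$-sized losses coming from \eqref{Basiccasescons'}, from the $S^\infty$ symbol bounds, and from the $O((j+m)^2)$ frequency summation. A secondary delicate point is the dichotomy in the spatial supports: one must confirm that when $2^j \gg 2^m 2^{-2\min\{k,k_1,k_2\}}$ \emph{and} $2^j \gg 2^{\max\{j_1,j_2\}}$ simultaneously, the contribution is truly negligible — this is exactly where the "finite speed of propagation" heuristic is being made rigorous, and it is the only place the hypothesis \eqref{FSassumption} is used in an essential way. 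Once these are in hand, the remaining estimates are routine applications of Hölder (Lemma \ref{lem:Holdertype}) and the a priori bounds.
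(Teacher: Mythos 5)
Your overall architecture matches the paper's: decompose into spatial atoms, treat the regime where the inputs already sit at distance $\approx 2^j$ by H\"older with the a priori bounds, and treat the complement by integration by parts in $\xi$, using that under \eqref{FSassumption} the group-velocity displacement $s\nabla_\xi\Phi$ cannot reach $\abs{x}\approx 2^j$. The gap is quantitative but genuine: as written, neither of your two cases produces the exponential factor $2^{-\delta^2(m+j)}$ required by \eqref{FSconclusion} -- both only break even up to constants.

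In the integration-by-parts case you take $F=\Phi$ rescaled, $K=2^m2^{-2\min\{k,k_1,k_2\}+D}$, and quantify the gain as $(2^j2^{-m}2^{2\min\{k,k_1,k_2\}})^{-M}$. Under \eqref{FSassumption} this is only guaranteed to be $\leq 2^{-MD^2}$, a constant independent of $j$ and $m$, which cannot absorb the unbounded prefactor $2^{(1+\delta)(k+j)}\cdot 2^m$; letting $M$ grow with $j$ is not permitted because the implicit constant in \eqref{IBP02} depends on $M$. (Moreover, with $F=\Phi$ the factor $e^{ix\cdot\xi}$ belongs to $g$ and each of its derivatives costs $2^j\gg K$, so \eqref{IBP02} literally loses.) The correct setup is $F=2^{-j}\big[x\cdot\xi+s\Phi(\xi,\eta)\big]$ with $K\approx 2^j$ (the lower bound $\abs{x+s\nabla_\xi\Phi}\gtrsim 2^j$ is exactly where \eqref{FSassumption} enters) and $\epsilon=2^{\min\{k,k_1\}}$; each step then gains $2^{-j}$ against a derivative cost of at most $\max\{2^{-k},2^{-k_1},2^{j_\nu}\}$ for the one input placed inside the $\xi$-integral, giving a net $2^{-\delta^2 jM}$ \emph{provided} that cost is $\leq 2^{(1-\delta^2)j}$. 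This forces the dichotomy threshold to be $(1-\delta^2)j$ on $\min\{j_1,j_2\}$ (and on $-k$), not $j-O(D)$ on $\max\{j_1,j_2\}$: with your cutoff $\max\{j_1,j_2\}\leq j-D$ the derivative cost $2^{j-D}$ cancels $K^{-1}=2^{-j}$ up to $2^{-D}$ and again only a constant is gained. Correspondingly, your H\"older case $\max\{j_1,j_2\}\gtrsim j$ with $\min\{j_1,j_2\}$ unconstrained does not close: placing the input with the larger $j_\nu$ in $L^2$ yields $2^{-(1+\delta)\max\{j_1,j_2\}}$, which merely cancels $2^{(1+\delta)j}$, while the $L^\infty$ factor's $2^{-m}$ is exactly eaten by the $2^m$ from the time integral; if $\min\{j_1,j_2\}$ is small there is no residual decay in $j$ or $m$, so you end with $O(\e_1^2)$ rather than $2^{-\delta^2(m+j)}\e_1^2$. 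The paper's H\"older case is restricted to $\min\{j_1,j_2\}\geq(1-\delta^2)j$ precisely so that the extra $2^{-\delta j_2}$ from \eqref{apriori101} supplies the gain $2^{-\delta(1-\delta^2)j}$.
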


\medskip
\begin{proof}%[Proof of Lemma \ref{FSlemma}]
We subdivide the proof in several cases and subcases.

\medskip
\noindent
{\bf Case 1: $k_1 \geq k_2+10$}.
In this case we must have $|k_1 - k| \leq 10$ and the assumption \eqref{FSassumption} implies
\begin{align}
\label{FSassumption1}
j \geq m - 2k_2 + D^2.
\end{align}
Notice that in view of \eqref{Basiccasescons'} we must have $j \geq m/2$.

\medskip
\noindent
{\it Subcase 1.1: $k \leq -(1-\delta^2)j$}.
In this case %$\max\{k,k_1,k_2\} \leq -m/3$ and
we can use an $L^2\times L^\infty$ estimate, see Lemma \ref{lem:Holdertype} and the symbol bound \eqref{symbound100},
with the a priori bounds \eqref{apriori100}-\eqref{apriori101}, to obtain
\begin{equation*}%\label{kleq-j}
\begin{aligned}
&2^{(1+\delta)(k+j)} \norm{Q_{jk} B_{m}(P_{k_1}f, P_{k_2}f)}_{L^2}
  \lesssim 2^{\delta j} \norm{Q_{jk} B_{m}(P_{k_1}f, P_{k_2}f)}_{L^2}
\\
&\quad \lesssim 2^{\delta j} \cdot 2^m \cdot 2^{k - k_2} \cdot \sup_{t \approx 2^m} {\| P_{k_1} f \|}_{L^2} {\| e^{itL_1} P_{k_2} f \|}_{L^\infty}
\\
&\quad \lesssim 2^{\delta j} \cdot 2^m \cdot 2^{k} \cdot \e_1 \cdot \e_1 2^{-m},
\end{aligned}
\end{equation*}
which suffices to obtain \eqref{FSconclusion}.

We now further decompose the profiles according to their spatial localization by defining, see \eqref{phijk}-\eqref{Qjkdecomp},
\begin{align}
\label{FSfjkdecomp}
 f_1 =  Q_{j_1 k_1} f, \quad f_2 = Q_{j_2 k_2} f, \quad j_\nu+k_\nu \geq 0, \, \nu=1,2. 
 %f_1 =  P_{[k_1-2,k_1+2]} \varphi_{j_1}^{(k_1)}(x)P_{k_1} f, \qquad f_2 =  P_{[k_2-2,k_2+2]} \varphi_{j_2}^{(k_2)}(x)P_{k_2} f.
\end{align}

\medskip
\noindent
{\it Subcase 1.2: $\min\{j_1,j_2\} \geq (1-\delta^2)j$}.
Here we use again an $L^2 \times L^\infty$ estimate and the a priori bounds \eqref{apriori100}-\eqref{apriori101}:
\begin{align*}
& 2^{4k^+} 2^{(1+\delta)(k+j)} \norm{Q_{jk} B_{m}(f_1, f_2)}_{L^2}
\\
&\quad \lesssim 2^{4k^+} 2^{(1+\delta)(k+j)} \cdot 2^m \cdot 2^{k - k_2} \cdot \sup_{t \approx 2^m} {\| f_1 \|}_{L^2} {\| e^{itL_1} f_2 \|}_{L^\infty}
\\
&\quad \lesssim 2^{5k^+} 2^{(1+\delta)(k+j)} \cdot 2^m \cdot \e_1 2^{-4k_1^+} 2^{-(1+\delta) (k_1+j_1)} \cdot \e_1 2^{-m} 2^{-4k_2^+} 2^{(1-\delta)k_2} 2^{-\delta j_2}.
\end{align*}
Using the assumption $\min\{j_1,j_2\} \geq (1-\delta^2)j$ this can be bounded by
\begin{align*}
\e_1^2 2^{k^+} 2^{(1+\delta) j} \cdot  2^{-(1+\delta) j_1} \cdot 2^{-\delta j_2}
  \lesssim \e_1^2 2^{k^+}  2^{-4\delta j/5} 2^{-\delta^2 j_1} 2^{-\delta^2 j_2}.
\end{align*}
Upon summing over $j_1$ and $j_2$ we obtain the bound \eqref{FSconclusion} also in view of $k \leq 2\delta j/3 + \delta^2 m+D$,
see \eqref{Basiccasescons}. %and $j \geq m/20$

\medskip
\noindent
{\it Subcase 1.3: $-k, \min\{j_1,j_2\} \leq (1-\delta^2)j$}.
In this case we want to integrate by parts in $\xi$ using the main assumption \eqref{FSassumption}.
More precisely, let us decompose according to \eqref{FSfjkdecomp} and inspect the formula
\begin{align}
 \label{FSIBPxi0}
\begin{split} %Q_{jk} 
\varphi_j^{(k)}(x) P_k B_{m}(f_1,f_2)(x)
= \varphi_j^{(k)}(x) \int_0^t \tau_m(s) \int_{\R^2\times\R^2}
  e^{i[ x\cdot\xi + s\Phi(\xi,\eta) ]} \mathfrak{m}(\xi,\eta) \varphi_k(\xi)
  \\ \times \widehat{f_1}(\xi-\eta) \widehat{f_2}(\eta) \, d\eta d\xi \, ds.
\end{split}
\end{align}
Let us assume first that $j_1 \leq (1-\delta^2)j$.
Notice that \eqref{nabla_xiPhi} and the hypothesis \eqref{FSassumption1} imply
\begin{align}
 \label{FSIBPxi0'}
\big| \nabla_{\xi} \big[ x\cdot\xi + s\Phi(\xi,\eta) \big] \big| = \big| x + s \nabla_\xi \Phi \big| \gtrsim 2^j.
\end{align}
We then want to apply Lemma \ref{lemIBP0} to
\begin{align*}
\int_{\R^2} e^{i[ x\cdot\xi + s\Phi(\xi,\eta) ]} \mathfrak{m}(\xi,\eta) \varphi_k(\xi) \widehat{f_1}(\xi-\eta) \, d\xi.
\end{align*}
%where $\xi$ plays the role of the variable $x$. %as in the explanation after \eqref{lemIBPapp}.
Let us explain this in detail since similar arguments will be used repeatedly below.
We let
\begin{align}
\label{FSIBPphase}
F(\xi) & =  2^{-j} \big[ x\cdot\xi + s\Phi(\xi,\eta) \big], \quad K \approx 2^j,
\end{align}
and have, for $\abs{\alpha} \geq 2$,
\begin{align*}
 \abs{D^{\alpha} F} \lesssim  2^{-j} s \, \abs{D^{\alpha}_\xi \Phi(\xi,\eta)} \lesssim 2^{-j + m} 2^{-(\abs{\alpha}+1)\min\{k,k_1\}} \lesssim 2^{(1-\abs{\alpha})\min\{k,k_1\}}.
\end{align*}
We can then choose $\epsilon = 2^{\min\{k,k_1\}}$,
make the natural choice of the integrand $$g(\xi) = \mathfrak{m}(\xi,\eta) \varphi_k(\xi) \widehat{f_1}(\xi-\eta),$$
and use the bound \eqref{IBP02} to obtain
\begin{align*}
{\| Q_{jk} B_{m}(f_1,f_2) \|}_{L^2} \lesssim
  2^{m+j} \cdot {\| \what{f_2} \|}_{L^1} \cdot \frac{1}{(2^{j+\min\{k,k_1\}})^M} \sum_{|\alpha|\leq M} 2^{\min\{k,k_1\} |\alpha|} \, {\| D^{\alpha} g \|}_{L^1}
\\
\lesssim 2^{m+j} \e_1  \cdot 2^{-jM} \big[ 2^{-\min\{k,k_1\}M} + 2^{-k M} + 2^{j_1 M}]\cdot \e_1
\lesssim 2^{-10j} \e_1^2.
\end{align*}
For the last inequality we have used \eqref{symbound100}, %the apriori bound \eqref{apriori99},
the fact that $\max\{-k,-k_1,j_1\} \leq (1-\delta^2)j$, and chosen $M = O(\delta^{-2})$ sufficiently large.
This gives \eqref{FSconclusion} when $j_1 \leq (1-\delta^2)j$.
%We will apply Lemma \ref{lemIBP1} in a similar fashion several times below.

When $j_2 \leq (1-\delta^2)j$ we can use a similar argument. More precisely we look at the formula \eqref{FSIBPxi0} and change variables to write
\begin{align*}
\begin{split}
Q_{jk} B_{m}(f_1,f_2)(x)
= \varphi_j^{(k)}(x) \int_0^t \tau_m(s) \int_{\R^2\times\R^2}
  \left[ e^{i[ x\cdot\xi + s\Phi(\xi,\eta) ]} \varphi_k(\xi) \mathfrak{m}(\xi,\xi-\eta) \right.
  \\ \left. \times \widehat{f_2}(\xi-\eta) \, d\xi \right] \widehat{f_1}(\eta) d\eta \, ds.
\end{split}
\end{align*}
%having used $\Phi(\xi,\eta) = \Phi(\xi,\xi-\eta)$.
Notice that \eqref{FSIBPxi0'} still holds. Therefore we can apply Lemma \ref{lemIBP0} %to
%\begin{align*}
%\int_{\R^2} e^{i[ x\cdot\xi + s\Phi(\xi,\eta) ]} \mathfrak{m}(\xi,\xi-\eta)  \varphi_k(\xi) \widehat{f_2}(\xi-\eta) \, d\xi
%\end{align*}
with the same phase as in \eqref{FSIBPphase} above, $\epsilon = 2^{-k_2}$, and the natural choice of the integrand $g$, obtaining
\begin{align*}
{\| Q_{jk} B_{m}(f_1,f_2) \|}_{L^2}
  %\lesssim \frac{1}{(2^{j+k_2})^N} \sum_{|\alpha|\leq N} (2^{-j+m-2k_2})^{N-|\alpha|} 2^{k_2|\alpha|} \, {\| D^{\alpha} g \|}_{L^1}
%\\
\lesssim 2^{m+j} \cdot 2^{ -(j+k_2)M} \e_1^2 \big[ 1 + 2^{(k_2+j_2)M} \big] \lesssim 2^{-10j} \e_1^2,
\end{align*}
since $-k_2 \leq j_2 \leq (1-\delta^2)j$.

\medskip
\noindent
{\bf Case 2: $k_2 \geq k_1+10$}.
This case is completely analogous to Case 1 since our main assumption is symmetric upon exchanging $k_1$ and $k_2$.

\medskip
\noindent
{\bf Case 3: $|k_1 - k_2| \leq 10$}.
In this case we have
\begin{equation*}
k \leq \min\{k_1,k_2\} + 20,
\end{equation*}
and the main assumption \eqref{FSassumption} implies
\begin{equation*}%\label{FSassumption3}
j \geq m - 2k + D.
\end{equation*}
Recall that in view of \eqref{Basiccasescons'} we must have $j \geq m/2$.
Also, using the same estimate of Subcase 1.1 above, we may assume $k \geq -(1-\delta^2)j$.

\smallskip
\noindent
{\it Subcase 3.1: $\min\{j_1,j_2\} \geq (1-\delta^2)j$}.
This case can be treated like we have done in the analogous subcases above via an $L^\infty \times L^2$ estimate: %using the symbol bound \eqref{symbound102}
\begin{align*}
& 2^{4k^+} 2^{(1+\delta)(k+j)} \norm{Q_{jk} B_{m}(f_1,f_2)}_{L^2}
\\
&\quad \lesssim 2^{4k^+} 2^{(1+\delta)(k+j)} \cdot 2^m \cdot \sup_{t \approx 2^m} {\| e^{itL_1} P_{k_1} f \|}_{L^\infty} {\| P_{k_2} f \|}_{L^2}
\\
%& \lesssim 2^{4k^+} 2^{(1+\delta)(k+j)} \cdot 2^m \cdot \e_1 2^{-m} 2^{-4k_1^+} 2^{(2-\delta)k_1} 2^{-\delta j_1} \cdot \e_1 2^{-4k_2^+} 2^{-(1+\delta) (k_2+j_2)}.
%\\
&\quad \lesssim \e_1^2 2^{(1+\delta) j} \cdot  2^{-\delta j_1} \cdot  2^{-(1+\delta) j_2} \lesssim \e_1^2 2^{-\delta j/2} 2^{-\delta^2 j_1} 2^{-\delta^2 j_2}.
\end{align*}
Summing over $j_1,j_2$ we get the desired bound \eqref{FSconclusion}.

\smallskip
\noindent
{\it Subcase 3.2: $\min\{j_1,j_2\} \leq (1-\delta^2)j$}.
In this case we can integrate by parts in $\xi$ as previously done after \eqref{FSIBPxi0}, using Lemma \ref{lemIBP0},
the lower bound \eqref{FSIBPxi0'} and $-k \leq (1-\delta^2)j$.
%having treated the complementary cases before.
\end{proof}

\vskip20pt
\section{The Weighted Estimate: Part I}\label{sec:weightI}
In this section we begin the proof of the main weighted bound
\begin{equation}
\label{eq:w_bound}
 \sup_{(k,j)\in\mathcal{J}}2^{4k^+}2^{(k+j)(1+\delta)}\norm{Q_{jk}B_{m}(f,f)}_{L^2}\lesssim 2^{-\delta^3 m}\varepsilon_1^2,
\end{equation}
showing how this can be reduced to a similar one
where the size of various important quantities can be restricted to specific ranges depending on the time variable.
More precisely we will show how to restrict the size of the input and output
frequencies to a range close to $1$ (a range of the form $[2^{-c_1\delta m}, 2^{c_2\delta m}]$ for some constants $c_1,c_2>0$),
the size of the phase $\Phi=\Phi(\xi,\eta)$ close to $2^{-m}$, and the size of its gradients in $\xi$ and $\eta$ close to $1$.
%All of these will be done by using IBPs and Schur type bounds...
In Section \ref{sec:weightII} we will then conclude our proof by treating the remaining cases.

\medskip
\vskip10pt
\subsection{Main Reduction of Interaction Frequencies}
Here we show how to treat the contributions from input and output frequencies that are much smaller than $1$,
more precisely smaller than $2^{-c\delta m}$ for some $c>0$.

\begin{prop}\label{prop:red_deltas}
Under the a priori assumptions \eqref{apriori100}-\eqref{apriori101} we have, for all $(k,j) \in \mathcal{J}$,
\begin{align}
\label{eq:weightbound}
\begin{split}
&\sum_{\substack{|k_1-k_2| \geq 10 \\ \min\{k_1,k_2\} \leq -5\delta m + D}}
   2^{4k^+}2^{(k+j)(1+\delta)} \| Q_{jk}B_{m}(P_{k_1}f,P_{k_2}f) \|_{L^2} \lesssim 2^{-2\delta^3 m} \e_1^2.
%\\
%& \mbox{if} \quad |k_1-k_2| \geq 10 \quad \mbox{and} \quad \min\{k_1,k_2\} \leq -25\delta m + D.
\end{split}
\end{align}
%for all $(j,k) \in \mathcal{J}$ and $-2.1m \leq k,k_1,k_2 \leq 4\delta m$.
Furthermore, for all $(k,j) \in \mathcal{J}$ %with $k \geq -2.1m$,
we have
\begin{equation}
\label{eq:weightbound'}
  \sum_{|k_1-k_2| \leq 10} 2^{4k^+}2^{(k+j)(1+\delta)} %\Big\| \sum_{\substack{|k_1-k_2| \leq 10 \\ -2.1m\leq k_1,k_2 \leq 4\delta m}} Q_{jk}B_{m}(P_{k_1}f,P_{k_2}f) \Big\|_{L^2}
  {\| Q_{jk}B_{m}(P_{k_1}f,P_{k_2}f) \|}_{L^2} \lesssim 2^{-2\delta^3 m} \e_1^2 \quad \text{ if }\; k \leq -5\delta m + D.
%\\ \mbox{if}
%\quad |k_1-k_2| \leq 10 \quad \mbox{and}
%\quad k \leq -25\delta m + D. %% Fix CONSTANT at the end
\end{equation}
\end{prop}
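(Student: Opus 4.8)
The plan is to establish Proposition \ref{prop:red_deltas} by following the same pattern already used in the ``Basic Cases'' lemma and in Lemma \ref{FSlemma}: an $L^2\times L^\infty$ H\"older bound handles the bulk of the parameter range, while the contributions where the spatial localizations $j_1,j_2$ are comparatively small are absorbed by repeated integration by parts in $\eta$ using Lemma \ref{lemIBP0}. Thanks to the reductions already in force we may assume \eqref{Basiccasescons}--\eqref{Basiccasescons'}, so that $\max\{k,k_1,k_2\}\lesssim \delta(j+\delta m)+D$ and all frequency sums have only $O((j+m)^2)$ terms; moreover by Lemma \ref{FSlemma} we may assume $j \leq m - 2\min\{k,k_1,k_2\} + D^2$. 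The crucial new input here is the \emph{improved} symbol bound \eqref{symbound104}, $\|\mathfrak{m}^{k,k_1,k_2}(\xi,\eta)\varphi_\ell(\xi-2\eta)\|_{S^\infty}\lesssim 2^{2\ell+2k-2k_1-2k_2}$, which reflects the double null form; in the high--high regime $|k_1-k_2|\le 10$ one first decomposes $B_m=\sum_{\ell\le k_1+20} B_{m,\ell}$ according to the size $2^\ell$ of $|\xi-2\eta|$ (equivalently of $\nabla_\eta\Phi$, cf.\ \eqref{eq:nabla_etaPhi}), so that the quadratic vanishing can be cashed in.

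For \eqref{eq:weightbound} (the high--low case with a small input frequency), assume WLOG $k_2\le k_1-10$, so $|k-k_1|\le 5$ and $k_2\le -5\delta m+D$. First I would run the $L^2\times L^\infty$ estimate of Lemma \ref{lem:Holdertype} with \eqref{symbound100}: using \eqref{apriori100}, \eqref{apriori101} and \eqref{aprioriH-1} for the low-frequency factor (in the form $\|P_{k_2}f\|_{L^2}\lesssim \e_1 2^{k_2}$) this yields a bound gaining a power $2^{\sigma k_2}$ for some positive $\sigma$, which after the $2^{(1+\delta)(k+j)}2^{4k^+}$ weight and summation over the $O((j+m)^2)$ admissible triples gives $2^{-2\delta^3 m}\e_1^2$ once $k_2\le -5\delta m+D$ — the small frequency $2^{k_2}$ is precisely what beats the polynomial loss in $(j+m)$ and the $\ip{t}^{p_0}$ factor. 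The same bound also disposes of the subregion where $\max\{j_1,j_2\}$ is large (say $\ge (1-\delta^2)j$), since the extra weight decay $2^{-(1+\delta)\max\{j_1,j_2\}}$ from \eqref{apriori100}--\eqref{apriori101} is then available. In the complementary subregion $\max\{j_1,j_2\}\le (1-\delta^2)j$ and $j\ge m-2k_2+D$ fails only on a range controlled by Lemma \ref{FSlemma}; otherwise, on the support we have $|\nabla_\eta\Phi|\approx 2^{-2k_2}$ and $|D_\eta^\alpha\Phi|\lesssim 2^{-(1+|\alpha|)k_2}$, so applying Lemma \ref{lemIBP0} with $K\approx s2^{-2k_2}$, $\epsilon=2^{k_2}$, and $g=\mathfrak{m}\,\widehat{f_1}\widehat{f_2}$ produces arbitrary decay $(2^m 2^{-2k_2})^{-M}(1+2^{k_2+\max\{j_1,j_2\}})^M$, which with $M=O(\delta^{-2})$ and the frequency restrictions is more than enough.

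For \eqref{eq:weightbound'} (the high--high case $|k_1-k_2|\le 10$ with $k\le -5\delta m+D$, hence $k\le\min\{k_1,k_2\}+20$ may fail and in fact $k$ is the genuinely small parameter), I would decompose in $\ell$ as above. On each piece, the $L^2\times L^\infty$ estimate with the refined bound \eqref{symbound104} gives a factor $2^{2\ell+2k-2k_1-2k_2}$; combined with the a priori bounds this produces a net power of $2^{2\min\{k,\ell\}+\cdots}$ which, after the output weight $2^{(1+\delta)(k+j)}$ and summing $\ell\le k_1+20$ and the $O((j+m)^2)$ frequency triples, is summable to $2^{-2\delta^3 m}\e_1^2$ precisely when $k\le -5\delta m+D$ — here it is the smallness of the \emph{output} frequency $2^k$, amplified by the double null form, that wins. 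The region where $\max\{j_1,j_2\}$ or $\ell$ is too small to make this work is again handled by integration by parts in $\eta$ via Lemma \ref{lemIBP0}, using $|\nabla_\eta\Phi|\approx 2^\ell 2^{-3k_1}$ (resp.\ $\approx 2^k 2^{-3k_1}$ when $k\le\ell$) and $|D_\eta^\alpha\Phi|\lesssim 2^{-(1+|\alpha|)k_1}$, exactly as in the ``$|k_1-k_2|<10$'' case of Lemma \ref{Lemdsf}; Lemma \ref{FSlemma} restricts $j$ enough that the surviving ranges are finite.

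The main obstacle is the bookkeeping in the high--high case: because $k,\ell$ and the $k_i$ can all be negative and of comparable but not equal size, one must carefully track which of $\min\{k,\ell\}$, $k_1$, $2^{k_i}$ (from $\dot H^{-1}$) and $\max\{j_1,j_2\}$ controls the gain in each sub-subcase, and verify that the choice $k\le -5\delta m+D$ together with $\delta\le 0.5\cdot 10^{-4}$ and $p_0\le\delta$ always leaves room for the $O((j+m)^2)$ loss and the target $2^{-2\delta^3 m}$. The integration-by-parts steps themselves are routine given Lemma \ref{lemIBP0}; the delicate point is only ensuring that after Lemma \ref{FSlemma} and the localizations $\ell\le k_1+20$, $\max\{j_1,j_2\}\le(1-\delta^2)j$, etc., one is left with $O(m)$ or $O(1)$ residual configurations on each of which a single application of Lemma \ref{lemIBP0} with a well-chosen $\epsilon$ and $M=O(\delta^{-2})$ closes the estimate.
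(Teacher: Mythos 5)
Your overall architecture (the $L^2\times L^\infty$ H\"older bound for most of the parameter range, stationary phase in $\eta$ via Lemma \ref{lemIBP0} when the spatial supports are small, the refined symbol bound \eqref{symbound104} in the high--high case) matches the paper's in broad outline, but two essential steps are missing, and in both cases the regions they cover cannot be closed by the tools you list. In the high--low case \eqref{eq:weightbound} you restrict $j$ only via Lemma \ref{FSlemma}, i.e.\ $j\le m-2k_2+D$. The paper's proof performs an additional integration by parts \emph{in $\xi$} (its Case 3), exploiting that $|\nabla_\xi\Phi|\approx 2^{k_2-3k_1}$ is much smaller than $2^{-2k_2}$ in this configuration, to sharpen the restriction to $j\le k_2-3k_1+m+D$. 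This is not cosmetic: in the region where $j_1$ is small but $j_2\gtrsim m-2k_2-\delta^2 m$ (so that integration by parts in $\eta$ is unavailable), the weight $2^{(1+\delta)(k+j)}$ can be as large as $2^{(1+\delta)(k_1+m-2k_2)}$, and either H\"older placement then leaves an uncancelled factor of order $2^{-(2+\delta)k_2}$ or $2^{-(1+3\delta)k_2}$, which diverges as $k_2\to-\infty$ (and $k_2$ can be as negative as $-O(m)$ by \eqref{Basiccasescons}). Only after the $\xi$-integration by parts does the weight carry $2^{+(1+\delta)k_2}$, which is what makes the positive power $2^{k_2}$ you are counting on actually appear.

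In the high--high case \eqref{eq:weightbound'} your toolkit omits the integration by parts in time entirely, and this cannot be avoided. In the regime $k\le\ell-5$ (output frequency much smaller than the inputs and than $|\xi-2\eta|$) with $\max\{j_1,j_2\}\ge m+k-3k_1-\delta m$, integration by parts in $\eta$ fails by definition of that threshold, and the H\"older estimate with the $2^{-\max\{j_1,j_2\}}$ gain does not close: that gain is $\approx2^{-m-k+3k_1}$ and contains the \emph{loss} $2^{-k}$ with $k$ possibly as negative as $-2m$, against a weight $2^{(1+\delta)(m-k)}$; the net result is of order $2^{O(\delta)m}2^{(2-2\delta)k_1}$, which is not $\lesssim 2^{-2\delta^3 m}$ once $k_1$ is, say, larger than $-2\delta m$. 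The paper closes this regime (and the very small $\ell$ regime of its Case 1) by localizing in the size of the phase $\Phi$, integrating by parts in $s$ as in \eqref{prored221}, and crucially invoking the almost-$t^{-2}$ decay of $\partial_tf$ from Lemma \ref{Lemdsf} together with the Schur bounds on the sub-level sets of $\Phi$ from Lemma \ref{lem:S+set}. None of these inputs appears in your outline, so the proposal does not cover the near--space-resonant, spatially delocalized interactions that are the real difficulty of this proposition.
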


\begin{proof}[Proof of Proposition \ref{prop:red_deltas}]
We split the proof into several scenarios, the most difficult ones being %the cases when, cfr. \eqref{B_m},
the high-high interactions.
%$|\xi-\eta| \ll |\eta|$ in \eqref{eq:weightbound},
%and when  $\abs{\xi}$ or $\abs{\xi-2\eta}$ are small compared to $|\eta|$ and $|\xi-\eta|$ in \eqref{eq:weightbound'}.

\bigskip
\noindent
{\it Proof of \eqref{eq:weightbound}.} %. Case 1: $k_2+10\leq k_1$}.
Because of the symmetry in $k_1,k_2$ we may assume $k_2+10\leq k_1$, $|k-k_1| \leq 10$.

\medskip
\noindent
{\it Case 1: $k \leq -(1-\delta^2)j$}.
In this case we can use an $L^2\times L^\infty$ estimate, see Lemma \ref{lem:Holdertype} and the symbol bound \eqref{symbound100},
with the a priori bounds \eqref{apriori100}-\eqref{apriori101} to obtain
\begin{equation*}%\label{kleq-j'}
\begin{aligned}
 2^{(1+\delta)(k+j)} \norm{Q_{jk} B_{m}(P_{k_1}f, P_{k_2}f)}_{L^2}
&\lesssim 2^{\delta j} \cdot 2^m \cdot 2^{k - k_2} \cdot \sup_{t \approx 2^m} \norm{P_{k_1} f}_{L^2} \norm{e^{itL_1} P_{k_2} f}_{L^\infty}
\\
&\lesssim 2^{\delta j} \cdot 2^m \cdot 2^{k} \cdot \e_1 \cdot \e_1 2^{-m} 2^{(1-\delta) k_2},
\end{aligned}
\end{equation*}
which suffices to obtain \eqref{eq:weightbound}. From now on we may assume $-k \leq (1-\delta^2)j$.

Let us now decompose the profiles according to their spatial localization, adopting the same notation as in \eqref{FSfjkdecomp}:
\begin{align}
\label{prored20}
 f_1 =  Q_{j_1 k_1} f, \quad f_2 = Q_{j_2 k_2} f, \quad j_\nu+k_\nu \geq 0, \, \nu=1,2. 
 %f_1 =  P_{[k_1-2,k_1+2]} \varphi_{j_1}^{(k_1)}(x)P_{k_1} f, \quad f_2 =  P_{[k_2-2,k_2+2]} \varphi_{j_2}^{(k_2)}(x)P_{k_2} f, \quad j_\nu+k_\nu \geq 0, \, \nu=1,2,
\end{align}
%and prove \eqref{eq:weightbound} in a few steps depending on the relative size of the frequencies and the spatial localizations.

\medskip
\noindent
{\it Case 2:  $j_1 \geq (1-\delta^2)j$}.
Here we use again an $L^2 \times L^\infty$ estimate and the a priori bounds \eqref{apriori100}-\eqref{apriori101}:
\begin{align*}
& 2^{4k^+} 2^{(1+\delta)(k+j)} \norm{Q_{jk} B_m(f_1, f_2)}_{L^2}
\\
&\qquad \lesssim 2^{4k^+} 2^{(1+\delta)(k+j)} \cdot 2^m \cdot 2^{k - k_2} \cdot \sup_{t \approx 2^m} {\| f_1 \|}_{L^2} {\| e^{itL_1} f_2 \|}_{L^\infty}
\\
&\qquad \lesssim 2^{5k^+} 2^{(1+\delta)(k+j)} \cdot 2^m \cdot \e_1 2^{-4k_1^+} 2^{-(1+\delta) (k_1+j_1)} \cdot \e_1 2^{-m} 2^{(1-\delta)k_2} 2^{-\delta j_2}.
\end{align*}
Using the assumption $j_1 \geq (1-\delta^2)j$, the finite speed of propagation Lemma \ref{FSlemma} to bound $j \leq m - 2k_2 + D$,
and that $k \leq 4\delta j/5+ \delta^2 m+D$ by \eqref{Basiccasescons}, we can estimate
\begin{align*}
2^{4k^+} 2^{(1+\delta)(k+j)} \norm{Q_{jk} B_m(f_1, f_2)}_{L^2}
  &\lesssim \e_1^2 2^{k^+} 2^{3 \delta^2 j} \cdot  2^{-\delta^2 j_1} \cdot 2^{(1-\delta)k_2} 2^{-\delta j_2}
\\
  &\lesssim \e_1^2 2^{2\delta m} \cdot 2^{(1-3\delta)k_2} 2^{-\delta^2 j_1} 2^{-\delta j_2}.
\end{align*}
Summing over $j_1$ and $j_2$ we obtain \eqref{eq:weightbound}. From now on we may assume $j_1 \leq (1-\delta^2)j$.
%$6\delta^2 \leq \delta$,

\medskip
\noindent
{\it Case 3: $j \geq k_2 - 3k_1 + m + D$}.
%When $j_1 \leq (1-\delta^2)j$ we can refine the finite speed of propagation estimate
In this case we proceed in a similar way as we did in the proof of Lemma \ref{FSlemma}, resorting to integration by parts in $\xi$.
We look again at the formula \eqref{FSIBPxi0} and notice that $|\nabla_\xi \Phi| \approx 2^{k_2 - 3k_1}$, see \eqref{nabla_xiPhi}.
Then we have the same lower bound as in \eqref{FSIBPxi0'}, that is
\begin{align*}
% \label{FSIBPxi0'}
\big| \nabla_{\xi} \big[ x\cdot\xi + s\Phi(\xi,\eta) \big] \big|  \gtrsim 2^j,
\end{align*}
and we can apply Lemma \ref{lemIBP0} to
\begin{align*}
\int_{\R^2} e^{i[ x\cdot\xi + s\Phi(\xi,\eta) ]} \mathfrak{m}(\xi,\eta) \varphi_k(\xi) \widehat{f_1}(\xi-\eta) \, d\xi.
\end{align*}
More precisely we do this by choosing again $F(\xi) =  2^{-j} [ x\cdot\xi + s\Phi(\xi,\eta) ]$, $K=2^j$, and using that for $|\alpha| \geq 2$
\begin{align*}
| D^{\alpha} F | \lesssim  2^{-j} s \, | D^{\alpha}_\xi \Phi(\xi,\eta) | \lesssim 2^{-j + m} \cdot 2^{-(|\alpha|+2)\min\{k,k_1\}} 2^{k_2}
  \lesssim 2^{(1-|\alpha|)k_1},
\end{align*}
so that we can let $\epsilon = 2^{k_1}$.
%We also make the natural choice of the integrand $$g(\xi) = \mathfrak{m}(\xi,\eta) \varphi_k(\xi) \widehat{f_1}(\xi-\eta),$$
%and invoke the bound \eqref{IBP02} to get
%\begin{align*}
%{\| Q_{j,k} B_{m,r}(f_1,f_2) \|}_{L^2} \lesssim
%  \frac{1}{(2^{j+\min\{k,k_1\}})^N} \sum_{|\alpha|\leq N} 2^{\min\{k,k_1\} |\alpha|} \, {\| D^{\alpha} g \|}_{L^1}
%\\
%\lesssim 2^{-jN} \cdot \e_1^2 \cdot \big[ 2^{-\min\{k,k_1\}N} + 2^{-k N} + 2^{j_1 N}]
%\lesssim 2^{-10j} \e_1^2.
%\end{align*}
Using the bound \eqref{IBP02}, and the a priori bounds \eqref{apriori100} and \eqref{aprioriH-1}, we can deduce
\begin{align*}
{\| Q_{jk} B_{m}(f_1,f_2) \|}_{L^2} \lesssim 2^m 2^{-10j} \cdot 2^{k_1-k_2} \cdot {\| \what{f_1} \|}_{L^1} {\| \what{f_2} \|}_{L^1}
%\lesssim 2^{-10j} \e_1^2 2^{-2k_1^+}
\lesssim 2^{-5j} 2^{-2k_1^+} \e_1^2,
\end{align*}
which can be multiplied by the factor $2^{(j+k)(1+\delta)}$ and summed over all indices 
%, using the a priori bounds \eqref{apriori100}-\eqref{aprioriH-1},
to give the desired estimate. %also in view of \eqref{Basiccasescons}-\eqref{Basiccasescons'}.
From now on we may assume $j \leq k_2 - 3k_1 + m + D$.

\medskip
\noindent
{\it Case 4}: $\max\{j_1,j_2\} \geq m-2k_2-\delta^2m$.
%Without loss of generality\footnote{If $j_1=j_{\max}$ the estimates are even stronger.}
%we may assume that $j_2=j_{\max}$, so that in particular $k_2+j_2> m-k_2-\delta^2m$ from the previous Step 2.
We use an H\"older estimate together with the usual a priori bounds,
placing the term with larger localization in $L^2$ and the other one in $L^\infty$, and obtain %(recall that $k_2\leq k_1$)
\begin{align*}
& %2^{(k+j)(1+\delta)}
2^{4 k^+} 2^{(k_2 - 2k_1 + m)(1+\delta)} \| P_k B_{m}(f_1,f_2)\|_{L^2}
\\
& \qquad \lesssim 2^{(k_2-2k_1+m)(1+\delta)} \cdot 2^m 2^{k_1-k_2}
  \cdot 2^{-m} 2^{(2-\delta)k_1}\e_1 \cdot 2^{-\max\{j_1,j_2\}} 2^{-(1+\delta)k_2}\e_1 \cdot 2^{-\delta (j_1+j_2)}
\\
%& \quad \lesssim \varepsilon_1^2 2^{k_2(1+\delta-1+1-\delta)-4k_2^+}2^{k_1(-2-2\delta+1+2-\delta)-4k_1^+}
%  2^{m(1+\delta+1-1-1+\delta^2)}2^{-\delta j_1}2^{-\delta j_2} \\
& \qquad \lesssim \varepsilon_1^2 2^{2\delta m} 2^{(1-3\delta)k_1} 2^{k_2} 2^{-\delta(j_1+j_2)}.
\end{align*}
Also in view of $j \leq - 2k_1 + m + D$ and \eqref{Basiccasescons'} we have $k_1\leq 2\delta m + D$,
thus summing the bound above over $j_1,j_2$ we obtain \eqref{eq:weightbound} whenever $k_2 \leq -5\delta m$.

\medskip
\noindent
{\it Case 5}: $\max\{j_1,j_2\} \leq m-2k_2-\delta^2m$.
Notice that since $k_2 \leq k_1 -10$ we have, see \eqref{nabla_xiPhi},
\begin{align*}
|\nabla_\eta \Phi(\xi,\eta)| \approx 2^{-2k_2}, \quad |D_\eta^{\alpha} \Phi(\xi,\eta)| \lesssim 2^{-k_2(|\alpha|-1)}, \quad |\alpha|\geq 2.
\end{align*}
We then resort to multiple integrations by parts in $\eta$, that is, we apply Lemma \ref{lemIBP0} with $F = 2^{2k_2} \Phi$, $K = s 2^{-2k_2}$,
$\epsilon = 2^{k_2}$ and $g = \mathfrak{m}(\xi,\eta)\widehat{f_1}(\xi-\eta) \widehat{f_2}(\eta)$.
Using the bound \eqref{IBP02} we have
\begin{align*}
\| Q_{jk}B_{m}(f_1,f_2) \|_{L^2} \lesssim 2^k \big\| \mathcal{F}(Q_{jk}B_{m}(f_1,f_2)) \big\|_{L^\infty}
  \lesssim 2^k 2^{-10m} 2^{k_1-k_2} {\| f_1 \|}_{L^2} {\| f_2 \|}_{L^2},
\end{align*}
which is more than sufficient to obtain \eqref{eq:weightbound} using also $j+k \leq k_2 -2k_1 + m + D$ and \eqref{apriori100}-\eqref{aprioriH-1}.

\bigskip
\noindent
{\it Proof of \eqref{eq:weightbound'}}.
In this scenario we will make crucial use of the symmetrization argument which gives better bounds on the null structure.
In view of Lemma \ref{FSlemma} (and the assumption that $k\leq -5\delta m + D$), in the current frequency configuration it is enough to show
\begin{align}
\label{prored100'}
\sum_{|k_1-k_2| \leq 10} 2^{(-k + m)(1+\delta)} {\| P_k B_{m}(P_{k_1}f,P_{k_2}f) \|}_{L^2} \lesssim \e_1^2 2^{-2\delta^3 m}.
\end{align}

\medskip
\noindent
{\it Localization in the size of $|\xi-2\eta|$}.
We now introduce a further localization in the size of $|\xi-2\eta|$ by writing
\begin{align}
\label{prored101}
\begin{split}
& \mathcal{F}B_{m,\ell}(f,g) = \int_0^t \tau_m(s)\int_{\R^2} W_{\ell}(f,g)\, d\eta ds,
\\
& W_{\ell}(f,g) := e^{is\Phi} \mathfrak{m}(\xi,\eta) \varphi_\ell(\xi-2\eta) \what{f}(\xi-\eta) \what{g}(\eta).
\end{split}
\end{align}
Notice that $B_{m,\ell}(P_{k_1}f, P_{k_2}f)$ vanishes if $\ell \geq k_1 + 20$.
Also, notice that the symbol obeys the refined bound
\begin{align}
\label{proredsymbound}
{\| \mathfrak{m}^{k,k_1,k_2} \varphi_\ell(\xi-2\eta) \|}_{S^\infty} \lesssim 2^{2\ell+2k-2k_1-2k_2}.
\end{align}
%which, in the present high-high to low frequency configuration, is better than the bounds \eqref{symbound100} and \eqref{symbound102}
%used for $m_0$ so far.
%Because of this we can still apply Lemma \ref{FSlemma} to $B_{m,r,\ell}$
Using this bound and standard H\"older estimates, 
we can reduce \eqref{prored100'} to proving the following:
\begin{equation}
\label{prored200}
\begin{aligned}
&2^{(m - k)(1+\delta)}  \norm{B_{m,\ell}(P_{k_1}f,P_{k_2}f)}_{L^2} \lesssim \e_1^2 2^{-\delta^2 m},
\\
&\text{with } \abs{k_1-k_2}\leq 10,  \quad -2m \leq \ell,k_1,k_2 \leq 4\delta m, \quad -2m \leq k \leq -5\delta m + D.
\end{aligned}
\end{equation}

The rest of this proof is dedicated to showing \eqref{prored200} and split into two cases, depending on which of the parameters $\ell$ or $k$ is smaller.

\medskip
\noindent
{\bf Case 1: $\ell \leq k + 5$.} %$\ell+10 \leq \min\{k_1,k_2\}$.
In this case we must have $k \geq \min\{k_1,k_2\} - 15$, so that $k,k_1,k_2$ are all comparable to each other and smaller than $-5\delta m + D$.
In particular \eqref{proredsymbound} gives
\begin{equation}
\label{prored205}
{\| \mathfrak{m}^{k,k_1,k_2} \varphi_\ell(\xi-2\eta) \|}_{S^\infty} \lesssim 2^{2\ell-2k_1}.
\end{equation}
We proceed in three steps.

%\comment{Simpler proof available\dots But should leave it as is, so that can use it directly in the case $k \leq \ell - 5$.}

\medskip
\noindent
{\it Step 1: $\ell - k_1 \leq -\frac{4m}{9}$}.
In this case we use integration by parts in time. We introduce a further localization in the size of the phase $\Phi$
in the bilinear operators $B_{m,\ell}$ defined in \eqref{prored101}. %in analogy with what was done in \eqref{prored50} above.
More precisely, we write
\begin{align}
\label{prored210}
\begin{split}
& B_{m,\ell}(f,g) = B_{m,\ell,\leq p_0} (f,g) + \sum_{p > p_0} B_{m,\ell,p} (f,g), \qquad  p_0 := -3m,
\\
& B_{m,\ell,\ast}(f,g) := \int_0^t \tau_m(s) \int_{\R^2} \varphi_{\ast}(\Phi(\xi,\eta)) W_{\ell}(f, g)(\xi,\eta) \, d\eta ds.
\end{split}
\end{align}
where $W_{\ell}$ is given in \eqref{prored101}.

Notice that in analyzing the terms in \eqref{prored210} we will be dealing with a kernel of the form
\begin{align}
\label{prored211}
K_{p,\ell}(\xi,\eta) := \varphi_p(\Phi(\xi,\eta)) \varphi_\ell(\xi-2\eta) \wt{\varphi}_k(\xi) \wt{\varphi}_{k_1}(\xi-\eta) \wt{\varphi}_{k_2}(\eta).
\end{align}
Since $k,k_1,k_2$ are all comparable and much larger than $\ell$ we see, using \eqref{eq:Schur_p3} in Lemma \ref{lem:S+set}, that
\begin{equation}
\label{eq:pureSchur_l}
%\norm{T_K}_{L^2\to L^2}\lesssim 2^{p+\frac{5k}{2}+\frac{l}{2}}.
{\|K_{p,\ell} \|}_{Sch} \lesssim 2^{p+\frac{5k_1}{2}+\frac{\ell}{2}}.
\end{equation}
We can directly use this estimate to obtain the desired bound \eqref{prored200} for the term $B_{m,\ell,\leq p_0}$.
Since we must also have $\abs{\Phi} \lesssim 2^{-2k_1} \lesssim 2^{5m}$, there are only $O(m)$ terms in the sum in \eqref{prored210},
and it will thus suffice to prove
\begin{equation}
\label{prored220}
 2^{(m - k)(1+\delta)}  {\| B_{m,\ell,p}(P_{k_1}f,P_{k_2}f) \|}_{L^2} \lesssim \e_1^2 2^{-3\delta^2 m},
\end{equation}
for fixed $p \in [-3m,5m]$.

Integrating by parts in $s$ %as already done in \eqref{prored56}
we can write:
\begin{equation}
\label{prored221}
\begin{aligned}
 B_{m,\ell,p}(P_{k_1}f,P_{k_2}f) & = I_{m,\ell,p}(P_{k_1}f,P_{k_2}f) - I\!I_{m,\ell,p}(\partial_t P_{k_1}f,P_{k_2}f) - I\!I_{m,\ell,p}(P_{k_1}f,\partial_t P_{k_2}f),
\\
 I_{m,\ell,p}(f,g) &:= \int_0^t 2^{-m} \tau^\prime_m(s) \int_{\R^2} \frac{\varphi_{p}(\Phi(\xi,\eta))}{i\Phi(\xi,\eta)} W_{\ell}(f,g)(\xi,\eta) \, d\eta ds,
\\
 I\!I_{m,\ell,p}(f,g) &:= \int_0^t \tau_m(s) \int_{\R^2} \frac{\varphi_{p}(\Phi(\xi,\eta))}{i\Phi(\xi,\eta)} W_{\ell}(f,g)(\xi,\eta) \, d\eta ds.
\end{aligned}
\end{equation}

For the first above term, using the a priori bounds \eqref{apriori100}-\eqref{aprioriH-1}, the bound on the symbol \eqref{prored205}
and the bound on the kernel \eqref{prored211}, we have the estimate
\begin{equation*}
\begin{aligned}
&2^{(m-k)(1+\delta)} \norm{I_{m,\ell,p}(P_{k_1}f,P_{k_2}f)}_{L^2}
\\
&\qquad \lesssim 2^{(m-k)(1+\delta)} \cdot 2^{2\ell -2k_1} \cdot 2^{-p} \cdot 
  {\| K_{p,\ell}(\xi,\eta) \what{P_{k_1}f}(\xi-\eta) \|}_{Sch} \norm{P_{k_2} f}_{L^2}
\\
&\qquad  \lesssim 2^{(m-k)(1+\delta)} \cdot 2^{\frac{k_1}{2}+\frac{5\ell}{2}} 2^{-k_1(1+\delta)} \e_1 \cdot 2^{k_2} \e_1
\\
&\qquad  \lesssim 2^{-(\frac{1}{2} + 2\delta)k_1} 2^{(1+\delta)m} 2^{\frac{5\ell}{2}} \e_1^2 \lesssim \e_1^2 2^{-m/40},
\end{aligned}
\end{equation*}
having used the assumption $\ell \leq -\frac{4m}{9} + k_1$ %and $k_1 \leq 4\delta m + D^2$ 
for the last step.

For the remaining terms in \eqref{prored221} we can use a similar bound together with \eqref{Lemdsfconc} to obtain
\begin{equation*}
\begin{aligned}
& 2^{(m-k)(1+\delta)} \norm{I\!I_{m,\ell,p} (\partial_t P_{k_1}f,P_{k_2}f)}_{L^2}
\\
& \qquad \lesssim 2^{(m-k)(1+\delta)} \cdot 2^m \cdot 2^{\frac{k_1}{2}+\frac{5\ell}{2}} \cdot
  {\| \what{P_{k_1}f} \|}_{L^\infty} \sup_{s\approx 2^m} \norm{\partial_s P_{k_2}f}_{L^2}
\\
& \qquad \lesssim 2^{-(1+\delta)k} 2^{(2+\delta)m} \cdot 2^{\frac{k_1}{2}+\frac{5\ell}{2}} \cdot \e_1 2^{-(1+\delta)k_1} \cdot \e_1^2 2^{k_2} 2^{-2m+10\delta m}
\\
& \qquad \lesssim 2^{11\delta m} 2^{-(\frac{1}{2}+2\delta)k_1} 2^{\frac{5\ell}{2}} \e_1^3 \lesssim \e_1^3 2^{-m/40}.
\end{aligned}
\end{equation*}
%which is generously sufficient under the current assumptions.
The same bound can be similarly obtained for $I\!I_{m,p}(P_{k_1}f, \partial_t P_{k_2}f)$ and this concludes the proof of \eqref{prored220}
when $\ell - k_1 \leq -\frac{4m}{9}$.

To deal with the remaining cases %in the case $\ell +20 \leq k_1$
we introduce the usual spatial localizations as defined in \eqref{prored20}, and aim to show
\begin{equation*}%\label{prored300}
 2^{(m - k)(1+\delta)} \sum_{j_1,j_2} {\| B_{m,\ell}(f_1,f_2) \|}_{L^2} \lesssim \e_1^2 2^{-2\delta^2 m},
\end{equation*}
under the assumptions in \eqref{prored200} and with $\ell - k_1 \geq -\frac{4m}{9}$.

\medskip
\noindent
{\it Step 2: $\ell - k_1 \geq -\frac{4m}{9}$ and $\max\{j_1,j_2\} \leq m+\ell - 3k_1 - \delta m$}.
In this case we can repeatedly integrate by parts. %similarly to what we did in Case 2 in the proof of Lemma \ref{Lemdsf}. %as in Case 1.5 above.
Indeed, in our current frequency configuration we have $|\nabla_\eta \Phi| \approx 2^\ell 2^{-3k_1}$, see \eqref{nabla_xiPhi}.
Then we can use Lemma \ref{lemIBP0} by letting $K = s (2^\ell 2^{-3k_1})^{-1}$, $F(\eta) = \Phi 2^\ell 2^{-3k_1}$ and $\epsilon = 2^{\ell}$.
From \eqref{IBP02}, choosing $M$ large enough, we then obtain $ {\| B_{m,\ell}(f_1,f_2) \|}_{L^2} \lesssim 2^{-10m} {\| f_1\|}_{L^2}{\| f_2\|}_{L^2}$,
which is more than sufficient to obtain \eqref{prored200}.

\medskip
\noindent
{\it Step 3: $\max\{j_1,j_2\} \geq m+\ell - 3k_1 - \delta m$}.
In this case a standard H\"older estimate, placing the input with largest position in $L^2$, suffices:
\begin{align*}
& 2^{(m-k)(1+\delta)} \norm{B_{m,\ell}(f_1,f_2)}_{L^2}
\\
& \qquad \lesssim 2^{(m-k)(1+\delta)} \cdot 2^m \cdot 2^{2\ell -2k_1}
  \cdot 2^{-m} 2^{(2-\delta)k_1}2^{-4k_1^+} \e_1 \cdot 2^{-\max\{j_1,j_2\}} 2^{-(1+\delta)k_2} \e_1 \cdot 2^{-\delta (j_1+j_2)}
\\
%\label{estl}
& \qquad \lesssim 2^{2\delta m} 2^{\ell} 2^{(1-3\delta)k_1}2^{-4k_1^+} 2^{-\delta(j_1+j_2)} \e_1^2,
\end{align*}
having used the a priori bounds \eqref{apriori100}-\eqref{apriori101}, and the symbol bound \eqref{prored205}.
Summing over $j_1,j_2$ we see that this implies the desired bound \eqref{prored200}
since  $\min\{k,k_1,k_2\} \leq -5\delta m + D$ holds.

\begin{remark}
\label{remestl}
Notice that the bounds proved above suffice to obtain an estimate as in \eqref{eq:weightbound'} for $\sum_\ell B_{m,\ell}$ instead of $B_m$,
provided that $\ell \leq -5\delta m$, and placing no additional smallness restriction on $k$.
\end{remark}

\medskip
\noindent
{\bf Case 2: $k \leq \ell-5$.} %$k + 10 \leq \min\{k_1,k_2\}$
Here we have $k \leq -5\delta m + D$ and $\abs{\ell - k_1} \leq 20$, and similar arguments to those of Case 1
can be used essentially by reversing the roles of $k$ and $\ell$.
Note that in this case stronger bounds are available for the kernel that we need to consider, see \eqref{eq:pureSchur_k} below.
We decompose the profiles according to their spatial localization as done above and proceed as follows.

\medskip
\noindent
{\it Step 1: $\max\{j_1,j_2\} \leq m + k- 3k_1 - \delta m$}.
Note that this case will be empty if $k < -m+3k_1 + \delta m$ and only Step 2 below needs to be performed.
In the current scenario we have $|\nabla_\eta \Phi| \approx 2^{k-3k_1}$ and $|D^\alpha_\eta \Phi| \lesssim 2^{k} 2^{-(|\alpha|+2)k_1}$, $|\alpha| \geq 1$.
We can then use Lemma \ref{lemIBP0} by letting $K = s (2^k 2^{-3k_1})^{-1}$, $F(\eta) = \Phi 2^k 2^{-3k_1}$ and $\epsilon = 2^{k_1}$,
obtaining ${\| B_{m,\ell}(f_1,f_2) \|}_{L^2} \lesssim 2^{-10m} {\| f_1\|}_{L^2}{\| f_2\|}_{L^2}$.

\medskip
\noindent
{\it Step 2: $\max\{j_1,j_2\} \geq m + k - 3k_1 - \delta m$}.
In this case we want to use integration by parts in $s$ similarly to Step 1 of Case 1 above.
From the formula for the symmetrized symbol we see that the bound \eqref{prored205} used before
can be substituted by
\begin{align*}
{\| \mathfrak{m}^{k,k_1,k_2} \varphi_\ell(\xi-2\eta) \|}_{S^\infty} \lesssim 2^{2k-2k_1}.
\end{align*}
Moreover, notice that we have a bound stronger than \eqref{eq:pureSchur_l} for the relevant kernel, that is
\begin{align}
\label{eq:pureSchur_k}
%\norm{T_K}_{L^2\to L^2}\lesssim 2^{p+k+2k_1}.
{\| \varphi_p(\Phi(\xi,\eta)) \varphi_\ell(\xi-2\eta) \wt{\varphi}_k(\xi) \wt{\varphi}_{k_1}(\xi-\eta) \wt{\varphi}_{k_2}(\eta) \|}_{Sch}
  \lesssim 2^{p+k+2k_1},
\end{align}
as per \eqref{eq:Schur_p3} in Lemma \ref{lem:S+set}.
Then the same arguments as in Step 1 of Case 1 above go through and give the main conclusion \eqref{eq:weightbound} 
when $k  \leq \min\{k_1, -5\delta m \}+D$. This concludes the proof of the Proposition.
\end{proof}

\medskip
As a consequence of Proposition \ref{prop:red_deltas} we have the following:
\begin{cor}
In order to prove the main bound \eqref{eq:w_bound} it will be enough to prove the following claim:
for all $(k,j) \in \mathcal{J}$ we have
\begin{equation}
\label{eq:w_bound1}
\begin{aligned}
& 2^{4k_+} 2^{m-2\min\{k,k_1,k_2\}+k}
 \norm{P_k B_{m,\ell}(P_{k_1}f,P_{k_2}f)}_{L^2} \lesssim 2^{-2\delta m} \e_1^2,  %2^{-60\delta m} \e_1^2
\\ & \text{whenever} \quad -5\delta m \leq k,k_1,k_2,\ell \leq 4\delta m + D^2,
\end{aligned}
\end{equation}
where $B_{m,\ell}$ is defined as %, in analogy with \eqref{prored101},
\begin{equation}
\label{prored101'}
\begin{aligned}
& \mathcal{F} B_{m,\ell}(f,g) = \int_0^t \tau_m(s) \int_{\R^2} W_{\ell}(f,g)\, d\eta ds,
\\
& W_{\ell}(f,g)(\xi,\eta) := e^{is\Phi(\xi,\eta)} \mathfrak{m}(\xi,\eta) \varphi_\ell(\xi-2\eta)\what{f}(\xi-\eta)\what{g}(\eta).
\end{aligned}
\end{equation}
\end{cor}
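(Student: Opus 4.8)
The plan is to obtain the Corollary purely by stringing together the three reductions already in hand — the ``Basic Cases'' lemma with its consequences \eqref{Basiccasescons}--\eqref{Basiccasescons'}, the finite speed of propagation Lemma~\ref{FSlemma}, and Proposition~\ref{prop:red_deltas} — which together confine $k,k_1,k_2,\ell$ to a window of width $O(\delta m)$ and $j$ to $[0,\,m+O(\delta m)]$; at that point a trivial passage from $Q_{jk}$ to $P_k$, using the finite speed of propagation bound on $j$, reduces the whole matter to \eqref{eq:w_bound1}.

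Concretely, first expand, using bilinearity of $B_m$ and $\sum_\ell\varphi_\ell(\xi-2\eta)=1$,
\[
Q_{jk}B_m(f,f)=\sum_{k_1,k_2,\ell}Q_{jk}B_{m,\ell}(P_{k_1}f,P_{k_2}f),
\]
with $B_{m,\ell}$ as in \eqref{prored101'}. The bounds \eqref{Basiccases1}--\eqref{Basiccases2} remove all $(k,k_1,k_2)$ violating \eqref{Basiccasescons}--\eqref{Basiccasescons'}, so we may assume \eqref{Basiccasescons'}. Lemma~\ref{FSlemma} then removes all $(k,j)$ with $j\geq m-2\min\{k,k_1,k_2\}+D^2$ — for such $(k,j)$ one sums its gain $2^{-\delta^2(m+j)}$ over the $O((m+j)^2)$ admissible frequency triples to obtain a bound $\lesssim 2^{-\delta^3 m}\e_1^2$ — so we may also assume $j\leq m-2\min\{k,k_1,k_2\}+D^2$. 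Finally, Proposition~\ref{prop:red_deltas} removes the high-low contributions with $\min\{k_1,k_2\}\leq-5\delta m+D$ and the high-high contributions with $k\leq-5\delta m+D$; in the high-high regime Remark~\ref{remestl} additionally absorbs the tail $\ell\leq-5\delta m$, whereas in the high-low regime $|k_1-k_2|\geq 10$ the support condition on $\varphi_\ell(\xi-2\eta)$ forces $\ell=\max\{k_1,k_2\}+O(1)$, so no separate restriction on $\ell$ is needed there. In all cases we are left with $\min\{k,k_1,k_2\}\geq-5\delta m+D$, hence $j\leq m+10\delta m+D^2$, and feeding this into \eqref{Basiccasescons'} gives $\max\{k,k_1,k_2\}\leq\delta(j+\delta m)+D\leq 4\delta m+D^2$, with correspondingly $-5\delta m\leq\ell\leq 4\delta m+D^2$.

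At this stage only $O(m^4)$ quadruples $(k,k_1,k_2,\ell)$ survive. For each of them and each admissible $j$, using $j\leq m-2\min\{k,k_1,k_2\}+D^2$ together with the elementary bound $\|Q_{jk}g\|_{L^2}\lesssim\|P_kg\|_{L^2}$ (valid because $Q_{jk}=P_{[k-2,k+2]}\varphi_j^{(k)}P_k$ with $\|\varphi_j^{(k)}\|_{L^\infty}\leq 1$), one gets
\[
2^{4k^+}2^{(1+\delta)(k+j)}\big\|Q_{jk}B_{m,\ell}(P_{k_1}f,P_{k_2}f)\big\|_{L^2}\lesssim 2^{\delta(k+m-2\min\{k,k_1,k_2\})}\cdot 2^{4k^+}2^{k+m-2\min\{k,k_1,k_2\}}\big\|P_kB_{m,\ell}(P_{k_1}f,P_{k_2}f)\big\|_{L^2},
\]
absolute constants coming from $D$ being absorbed into $\lesssim$. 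Granting the claim \eqref{eq:w_bound1}, the last product is $\lesssim 2^{-2\delta m}\e_1^2$; since $k+m-2\min\{k,k_1,k_2\}\leq(1+14\delta)m+O(1)$ in the surviving range, the remaining prefactor is $\lesssim 2^{\delta m(1+14\delta)}$, so every term is $\lesssim 2^{-\delta m(1-14\delta)}\e_1^2\lesssim 2^{-\delta m/2}\e_1^2$ for $\delta$ small. Summing over the $O(m^4)$ surviving triples $(k_1,k_2,\ell)$ at fixed $(k,j)$ and using $2^{-\delta m/2}\lesssim 2^{-\delta^3 m}$ yields the desired bound \eqref{eq:w_bound}.

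There is no genuine analytic obstacle here — the Corollary is a bookkeeping step — so the only points that require care are organizational: propagating the $\ell$-range correctly (the tail $\ell\leq-5\delta m$ being hidden inside Remark~\ref{remestl} in the high-high case, and $\ell$ being pinned to $\max\{k_1,k_2\}$ in the high-low case), and checking that the exponent $2^{-2\delta m}$ of \eqref{eq:w_bound1} genuinely dominates both the $2^{O(\delta m)}$ loss from replacing the sharp weight $2^{(1+\delta)(k+j)}$ by $2^{k+m-2\min\{k,k_1,k_2\}}$ and the $O(m^4)$ count of surviving terms.
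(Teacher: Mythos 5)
Your proposal is correct and follows essentially the same route as the paper: apply the Basic Cases bounds, the finite speed of propagation Lemma~\ref{FSlemma}, and Proposition~\ref{prop:red_deltas} (together with Remark~\ref{remestl} for the small-$\ell$ tail in the high-high case) to confine $k,k_1,k_2,\ell$ to the window $[-5\delta m, 4\delta m + O(1)]$ and $j$ to $j\lesssim m-2\min\{k,k_1,k_2\}$, then trade $2^{(1+\delta)(k+j)}$ for $2^{m-2\min\{k,k_1,k_2\}+k}$ at the cost of a $2^{(3/2)\delta m}$ loss absorbed by the $2^{-2\delta m}$ gain in \eqref{eq:w_bound1}. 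The bookkeeping of the $\ell$-range (pinned to $\max\{k_1,k_2\}$ in the high-low case) and the polynomial-in-$m$ count of surviving terms are handled exactly as in the paper.
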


%\smallskip
\begin{proof}
In view the estimates \eqref{eq:weightbound}, \eqref{eq:weightbound'} in Proposition \ref{prop:red_deltas},
we know that to obtain the main bound \eqref{eq:w_bound} it will suffice to show
\begin{align}
\label{eq:w_bound3}
\begin{split}
\sup_{\substack{k+j\geq 0\\ k \geq -5\delta m}} & 2^{4k_+} 2^{(k+j)(1+\delta)}
 \sum_{k_1,k_2 \geq -5\delta m} \norm{Q_{jk} B_{m}(P_{k_1}f,P_{k_2}f)}_{L^2} \lesssim 2^{-\delta^3 m} \e_1^2.
\end{split}
\end{align}
Recall that from \eqref{Basiccasescons'} we have the upper bound $\max\{k,k_1,k_2\} \leq \delta(j+m) + D$.
Then the finite speed of propagation Lemma \ref{FSlemma} suffices to bound the sum in \eqref{eq:w_bound3}
whenever $j \geq m - 2\min\{k,k_1,k_2\} + D$.
We may therefore restrict ourselves to $j \leq m - 2\min\{k,k_1,k_2\} + D \leq (1+10\delta)m + D$, and thus also to $\max\{k,k_1,k_2\} \leq 4\delta m +D$.
We then have a sum over at most $O(m^2)$ terms so that it suffices to prove the bound
\begin{align*}%\label{eq:w_bound3'}
\begin{split}
2^{4k_+} 2^{(k+j)(1+\delta)} \norm{Q_{jk} B_{m}(P_{k_1}f,P_{k_2}f)}_{L^2} \lesssim 2^{-(3/2)\delta^3 m} \e_1^2
\end{split}
\end{align*}
for each fixed triple $k,k_1,k_2$ with $-5\delta m \leq k,k_1,k_2 \leq 4\delta m + D$, and $(k,j) \in \mathcal{J}$.
Moreover, in view of Remark \ref{remestl} we may also replace $B_m$ above with $B_{m,\ell}$ and assume that $\ell \geq -5\delta m$.
The claim follows since $\delta(m - 2\min\{k,k_1,k_2\} + k) \leq (3/2)\delta m$.
\end{proof}

\medskip
\vskip10pt
\subsection{Further Reductions}%\label{secfred}
We now turn to further reductions on the size of the phase $\Phi$ and the spatial localization of the profiles
in the bilinear term $B_{m,\ell}(P_{k_1}f,P_{k_2}f)$ in \eqref{prored101'}.
For this purpose let us write
\begin{align}
\label{fred1.1}
& B_{m,\ell}(P_{k_1}f,P_{k_2}f) = \sum_{p \in \Z} B_{m,\ell,p}(P_{k_1}f,P_{k_2}f) = \sum_{r,p \in \Z} B_{m,\ell,r,p}(P_{k_1}f,P_{k_2}f),
\\
\label{fred1.2}
& B_{m,\ell,p}(f,g) := \mathcal{F} \int_0^t \tau_m(s) \int_{\R^2} \varphi_p(\Phi(\xi,\eta)) W_{\ell}(f,g)\, d\eta ds,
\\
\label{fred1.3}
& B_{m,\ell,r,p}(f,g) := \mathcal{F} \int_0^t \tau_m(s) \int_{\R^2} \varphi_p(\Phi(\xi,\eta)) \varphi_r(\eta-2\xi) W_{\ell}(f,g)\, d\eta ds.
\end{align}
where $W_\ell$ is as in \eqref{prored101'}. %analogous to $B_{m,\ell,p}$ already defined in \eqref{prored210}
Notice that $B_{m,\ell,p}(P_{k_1}f,P_{k_2}f)$ is trivial unless $p \leq -2\min\{k,k_1,k_2\} + D \leq 10\delta m + D$ and
$r \leq \max\{k_1,k_2\} + D \leq 4\delta m + 2D^2$.
Also note that a Schur-type estimate using Lemma \ref{lem:S+set} will give the desired bound for the sum of the terms
$B_{m,\ell,p}$ when $p \leq -3m$. Similarly, it is not hard to see that one can obtain the bound \eqref{eq:w_bound1} for
the terms $B_{m,\ell,p,r}$ when $r \leq -3m$.
Therefore the summations in \eqref{fred1.1} are all over at most $O(m^2)$ terms, and it suffices to prove the bound for each element in the sum.

\begin{prop}\label{propfred}
With the usual notation
$f_\nu = P_{[k_\nu-2,k_\nu+2]} \varphi_{j_\nu}^{(k_\nu)}(x)P_{k_\nu} f, j_\nu+k_\nu \geq 0, \nu=1,2$,
and under the frequency restriction in \eqref{eq:w_bound1}, namely
\begin{equation*}%\label{propfred0}
-5\delta m \leq k,k_1,k_2,\ell \leq 4\delta m + D^2,
\end{equation*}
we have
\begin{align}
\label{propfred1}
\begin{split}
 \norm{P_k B_{m,\ell}(f_1,f_2)}_{L^2} \lesssim 2^{-2m} \e_1^2
\quad \text{if }\; \max\{ j_1,j_2\} \leq m + \min\{k,\ell\} - 3k_1 -\delta m.
\end{split}
\end{align}
If instead $\max\{ j_1,j_2\} \geq m + \min\{k,\ell\} - 3k_1 -\delta m$, then we have the following bounds:
\begin{alignat}{2}
\label{propfred2}
& 2^{4k_+} 2^{m-2\min\{k,k_1,k_2\}+k} \norm{ P_k B_{m,\ell,p}(f_1,f_2) }_{L^2} \lesssim 2^{-3\delta m} \e_1^2 \quad &&\text{if }\; p \geq  -m +40\delta m,
\\ \nonumber
\\
\label{propfred3}
& 2^{4k_+} 2^{m-2\min\{k,k_1,k_2\}+k} \norm{ P_k B_{m,\ell,p,r}(f_1,f_2) }_{L^2} \lesssim 2^{-4\delta m} \e_1^2 \quad &&\text{if }\; r \leq - 35\delta m,
\\ \nonumber
\\
\label{propfred4}
& 2^{4k_+} 2^{m-2\min\{k,k_1,k_2\}+k} \norm{ P_k B_{m,\ell,p,r}(f_1,f_2) }_{L^2} \lesssim 2^{-4\delta m} \e_1^2 \quad &&\text{if }
  \; \min\{ j_1,j_2\} \geq \frac{m}{2} + 60\delta m.
\end{alignat}
\end{prop}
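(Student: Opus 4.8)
The plan is to prove the four bounds of Proposition~\ref{propfred} by supplying, in each regime, a surplus factor $2^{-c\delta m}$ over the trivial H\"older estimate $2^m\|f_1\|_{L^2}\|e^{isL_1}f_2\|_{L^\infty}$ for the time-integral (recall that the prefactor $2^{4k_+}2^{m-2\min\{k,k_1,k_2\}+k}$ is only $2^{m+O(\delta m)}$ in the range \eqref{eq:w_bound1}). For \eqref{propfred1} I would integrate by parts in $\eta$: on the support of $W_\ell$ one has $|\nabla_\eta\Phi|\approx 2^{\min\{k,\ell\}-3k_1}$ and $|D^\alpha_\eta\Phi|\lesssim 2^{-(1+|\alpha|)k_1}$ (see \eqref{nabla_xiPhi}), while $\widehat{f_1},\widehat{f_2}$ oscillate in $\eta$ only at rate $\lesssim 2^{\max\{j_1,j_2\}}$; the hypothesis forces $s|\nabla_\eta\Phi|\geq 2^{\max\{j_1,j_2\}+\delta m}$, so that Lemma~\ref{lemIBP0} applied with $K\approx 2^{m+\min\{k,\ell\}-3k_1}$, $\epsilon\approx 2^{k_1}$, and $g=\mathfrak{m}\varphi_\ell\widehat{f_1}(\xi-\eta)\widehat{f_2}(\eta)$ gains $2^{-\delta m}$ per step and, after $M=O(\delta^{-1})$ steps, yields $\lesssim 2^{-10m}\|f_1\|_{L^2}\|f_2\|_{L^2}$ --- far more than needed. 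This is the integration-by-parts scheme already used in the proof of Lemma~\ref{Lemdsf}.

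For \eqref{propfred2} I would integrate by parts in $s$, exactly as in \eqref{prored221}: with $e^{is\Phi}=(i\Phi)^{-1}\partial_se^{is\Phi}$ and the cutoff $\varphi_p(\Phi)$, the term $B_{m,\ell,p}$ becomes a $\tau_m'$-term, whose time-integral contributes only $O(1)$ rather than $O(2^m)$, plus two $\partial_tf$-terms. Since $|\Phi|^{-1}\lesssim 2^{-p}\leq 2^{m-40\delta m}$, the $\tau_m'$-term carries a net surplus $2^{-40\delta m}$, while the $\partial_tf$-terms are controlled by the almost-$t^{-2}$ decay \eqref{Lemdsfconc} of Lemma~\ref{Lemdsf}, which absorbs $2^{-p}$ with room to spare. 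The only remaining ingredient is a Schur bound for $\varphi_p(\Phi)\varphi_\ell(\xi-2\eta)\widetilde\varphi_k(\xi)\widetilde\varphi_{k_1}(\xi-\eta)\widetilde\varphi_{k_2}(\eta)$, provided by Lemma~\ref{lem:S+set} as in \eqref{eq:pureSchur_l}; since $2^{-40\delta m}$ dominates the $O(\delta m)$ losses from $2^{4k_+}$ and from the normalization, the bound closes comfortably. The bound \eqref{propfred3} in turn rests on the double null form: on $\{|\eta-2\xi|\approx 2^r\}$ one has $\|\mathfrak{m}^{k,k_1,k_2}\varphi_r(\eta-2\xi)\|_{S^\infty}\lesssim 2^{r-\min\{k_1,k_2\}}$, a gain of $2^{r-k}\leq 2^{-30\delta m}$ over the generic bound on \eqref{FSBsym}. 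Inserting this into an $L^2\times L^\infty$ H\"older estimate via \eqref{apriori100}--\eqref{apriori101} (the input with the larger $j_\nu$ in $L^2$), and, in the sub-range where the profile decay is by itself insufficient, performing in addition a few $\eta$-integrations by parts as for \eqref{propfred1} --- legitimate because $|\nabla_\eta\Phi|\approx 2^{-2k}$ is non-degenerate near $\eta=2\xi$ --- the $2^{-30\delta m}$ surplus beats the accumulated $O(\delta m)$ losses; the threshold $r\leq-35\delta m$ is chosen precisely so that this accounting succeeds.

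The genuinely delicate case is \eqref{propfred4}. Here $\min\{j_1,j_2\}\geq m/2+60\delta m$ and, by the standing assumption, $\max\{j_1,j_2\}\gtrsim m$, so $\|f_1\|_{L^2}\|f_2\|_{L^2}\lesssim\e_1^2\,2^{-3m/2+O(\delta m)}$; yet this is \emph{not} enough, because the extra $2^m$ from the $s$-integration together with the $2^m$ in the normalization overwhelm the available $2^{-3m/2}$ by a factor $2^{m/2}$, and a Schur bound on $\{|\Phi|\approx 2^p\}$ does not rescue it. The missing $2^{-m}$ must come from oscillation. In the physical-space form, $B_{m,\ell,p,r}(f_1,f_2)(x)$ equals the integral over $(s,\xi,\eta,y_1,y_2)$ of $\tau_m(s)\,e^{i[\xi\cdot(x-y_1)+\eta\cdot(y_1-y_2)+s\Phi(\xi,\eta)]}\,\mathfrak{m}\,\varphi_p(\Phi)\varphi_r(\eta-2\xi)\varphi_\ell(\xi-2\eta)\,f_1(y_1)f_2(y_2)$, whose $\eta$-phase gradient is $y_1-y_2+s\nabla_\eta\Phi$ and whose $\xi$-phase gradient is $x-y_1+s\nabla_\xi\Phi$; after a dyadic and angular decomposition of $f_1,f_2$ fixing the centers $y_1,y_2$, at least one of these gradients is of size $\gtrsim 2^m$ unless $(y_1,y_2)$ lies on a codimension-two ``resonant-center'' set where the transport terms $s\nabla\Phi$ cancel the input positions. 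Integrating by parts, in whichever of $\xi,\eta$ is non-stationary, off that set, and estimating the residual small-measure contribution by combining the profile decay with the $\varphi_r$-localization, supplies the missing $2^{-m}$. I expect this to be the main obstacle: one must set up the integration by parts so that the per-step loss $2^{\max\{j_1,j_2\}-m}$ is offset, and deal with the resonant-center piece by hand. This is exactly why \eqref{propfred4} is singled out here --- as a first, simpler instance of the $TT^*$ mechanism developed in full in Section~\ref{sec:weightII}.
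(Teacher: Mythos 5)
Your treatment of \eqref{propfred1} coincides with the paper's (repeated integration by parts in $\eta$ via Lemma \ref{lemIBP0}), and for \eqref{propfred2} the outline --- integration by parts in $s$ as in \eqref{prored221} plus Lemma \ref{Lemdsf} for the $\partial_t f$ terms --- is also the paper's. But already there your accounting for the $\tau_m'$ boundary term does not close: the Schur bound you invoke is $\approx 2^{p+O(\delta m)}$ and exactly cancels the factor $2^{-p}$ coming from $|\Phi|^{-1}$, so the claimed ``net surplus $2^{-40\delta m}$'' evaporates, and you remain short a full power $2^{-m}$ against the normalization $2^{m-2\min\{k,k_1,k_2\}+k}$. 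The paper instead runs an $L^\infty\times L^2$ estimate (Lemma \ref{lem:H+ibpt}), placing the profile with the larger $j_\nu$ in $L^2$ so that the standing hypothesis $\ol{j}\geq m+\min\{k,\ell\}-3k_1-\delta m$ gives $\|f_{\ol{j}}\|_{L^2}\lesssim\e_1 2^{-m+O(\delta m)}$; it is the combination of this $2^{-\ol{j}}$ gain, the explicit $2^{-p}\leq 2^{m-40\delta m}$, and the $2^{-m}$ dispersive decay of the other input that closes \eqref{propfred2}. You use the $2^{-\ol{j}}$ gain for \eqref{propfred3} but not here, where it is equally indispensable.

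The genuine gaps are in \eqref{propfred3} and \eqref{propfred4}. For \eqref{propfred3}, the null-form gain $2^{r-k}\leq 2^{-30\delta m}$ combined with $L^\infty\times L^2$ H\"older and the profile decay leaves you with roughly $2^{+30\delta m}$ after the normalization (the prefactor constants and the slack in $\ol{j}\geq m-O(\delta m)$ consume the $2^{-30\delta m}$), and your fallback of a few $\eta$-integrations by parts is unavailable: the standing assumption $\ol{j}\geq m+\min\{k,\ell\}-3k_1-\delta m$ means $2^{\ol{j}}\gtrsim s|\nabla_\eta\Phi|$, which is exactly the regime where Lemma \ref{lemIBP0} produces no gain (this is why \eqref{propfred1} and \eqref{propfred2}--\eqref{propfred4} are complementary). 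The missing ingredient is the set-size bound of Lemma \ref{lem:S+set}\eqref{it:Schur_p2} for the kernel carrying both $\varphi_p(\Phi)$ and $\varphi_r(\eta-2\xi)$, contributing $2^{p+\frac{r}{2}+O(\delta m)}$ with $p\leq -m+40\delta m$ from the previous step; only the combination of the symbol gain $2^{r}$, this Schur gain, and the $2^{-\ol{j}}$ decay closes the estimate. For \eqref{propfred4} you correctly compute that $\|f_1\|_{L^2}\|f_2\|_{L^2}\lesssim 2^{-3m/2+O(\delta m)}$ leaves a deficit of $2^{m/2}$, but your assertion that ``a Schur bound on $\{|\Phi|\approx 2^p\}$ does not rescue it'' is precisely where you go astray: the paper uses the elementary interpolated bound $\|K(\xi,\eta)g(\xi-\eta)\|_{Sch}\lesssim\|K\|_{Sch}^{1/2}\|g\|_{L^2}$ (valid for $|K|\lesssim 1$), so Schur's test retains the $L^2$ smallness of \emph{both} profiles while still gaining half the set size, $2^{p/2}\leq 2^{-m/2+20\delta m}$ --- exactly the missing $2^{-m/2}$ once $\ul{j}\geq m/2+60\delta m$. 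Your proposed substitute, a physical-space non-stationary-phase argument off a ``resonant-center set,'' is not carried out and would face the same obstruction as above (no gain from integration by parts when the profile spread is comparable to $s|\nabla\Phi|$); the genuine oscillation argument is deliberately deferred to the $TT^\ast$ analysis of Section \ref{sec:weightII} and is not needed for \eqref{propfred4}.
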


For convenience we introduce the notation
\begin{equation}
\label{notmaxmin}
\ul{k} := \min\{k_1,k_2\}, \quad \ol{k} := \max\{k_1,k_2\}, \qquad \ul{j} := \min\{j_1,j_2\}, \quad \ol{j} := \max\{j_1,j_2\}.
\end{equation}

\begin{proof}
Each one of the bounds in the statement can be proven via similar techniques to those used in the proof of Proposition \eqref{prop:red_deltas} above.

\medskip
\noindent
{\it Proof of \eqref{propfred1}}.
This follows by integrating by parts in $\eta$ sufficiently many times, i.e.\ by applying Lemma \ref{lemIBP0}
using the fact that $|\nabla_\eta \Phi| \approx 2^{k+\ell-4k_1}$ and $|D^\alpha_\eta \Phi| \lesssim 2^{-(|\alpha|+1) \min\{k_1,k_2\}}$
on the support of the integral.

\medskip
\noindent
{\it Proof of \eqref{propfred2}}.
Now we treat the term $B_{m,\ell,p}$ as defined in \eqref{fred1.2} analogously to what was done in \eqref{prored210} %for $p \geq -m + 70\delta m$
and integrate by parts in $s$.
Similarly to \eqref{prored221} we obtain
$B_{m,\ell,p}(f_1,f_2) = I_{m,p}(f_1,f_2) - I\!I_{m,p}(\partial_tf_1,f_2) - I\!I_{m,p}(f_1,\partial_tf_2)$ where
\begin{align}
\label{propfred10}
\begin{split}
%& B_{m,\ell,p}(P_{k_1}f,P_{k_2}f) = I_{m,p}(P_{k_1}f,P_{k_2}f) - I\!I_{m,p}(\partial_t P_{k_1}f,P_{k_2}f) - I\!I_{m,p}(P_{k_1}f,\partial_t P_{k_2}f),
%\\
& I_{m,\ell,p}(f,g) := \int_0^t 2^{-m} \tau^\prime_m(s) \int_{\R^2} \frac{\varphi_{p}(\Phi(\xi,\eta))}{i\Phi(\xi,\eta)} W_\ell(f,g)(\xi,\eta) \, d\eta ds,
\\
& I\!I_{m,\ell,p}(f,g) := \int_0^t \tau_m(s) \int_{\R^2} \frac{\varphi_{p}(\Phi(\xi,\eta))}{i\Phi(\xi,\eta)} W_\ell(f,g)(\xi,\eta) \, d\eta ds.
\end{split}
\end{align}

For the first term in \eqref{propfred10} we use Lemma \ref{lem:H+ibpt} and the a priori bounds,
estimating the profile with the largest spatial localization in $L^2$ and obtain
\begin{align*}
{\| P_k I_{m,\ell,p}(f_1,f_2) \|}_{L^2} & \lesssim 2^{-p} \cdot {\|\mathfrak{m}^{k,k_1,k_2} \varphi_\ell(\xi-2\eta) \|}_{S^\infty} \cdot 2^{-m} 2^{-2\ol{k}^+} \e_1
  \cdot 2^{-\ol{j}} 2^{-\ul{k}} \e_1
\\
& \lesssim 2^{-m - 39\delta m} \cdot {\|\mathfrak{m}^{k,k_1,k_2} \|}_{S^\infty}
  \cdot 2^{-\ul{k}-2\ol{k}^+} 2^{-\min\{k,\ell\} + 3k_1} \e_1^2.
\end{align*}
Using the bound ${\|\mathfrak{m}^{k,k_1,k_2} \|}_{S^\infty} \lesssim 2^{-\ul{k} + \ol{k}}$, we see that
\begin{align*}
2^{m+k-2\min\{k,k_1,k_2\}} \norm{P_k I_{m,\ell,p}(f_1,f_2)}_{L^2} & \lesssim \e_1^2 2^{-39\delta m} \cdot 2^{-4\min\{0,k,k_1,k_2,\ell\}}2^{2\max\{0,k,k_1,k_2,\ell\}}, %+ 2\max\{k_1,k_2\}},
\end{align*}
which suffices to obtain \eqref{propfred2} in view of the restrictions in \eqref{eq:w_bound1}.

For the terms $I\!I_{m,p}$ we use Lemma \ref{lem:H+ibpt}, estimating in $L^2$ the term involving the time derivative of the profile
via \eqref{Lemdsfconc}, together with the bound for the symbol used above:
\begin{align*}
{\| P_k I\!I_{m,\ell,p}(f_1,f_2) \|}_{L^2} & \lesssim  2^{m} \cdot 2^{-p}\cdot {\| \mathfrak{m}^{k,k_1,k_2} \varphi_\ell(\xi-2\eta) \|}_{S^\infty}
  \cdot 2^{-m} 2^{2\ol{k}} \e_1 \cdot 2^{\ul{k}} 2^{-2m + 10\delta m} \e_1 2^{-4\ol{k}^+}
\\
& \lesssim 2^{-m - 30\delta m} \cdot 2^{3\ol{k} - 4\ol{k}^+} \e_1^2.
\end{align*}
This suffices to prove \eqref{propfred2}.

\medskip
\noindent
{\it Proof of \eqref{propfred3}}.
We now look at the bilinear term $B_{m,\ell,p,r}$ defined in \eqref{fred1.3}
with $r \leq -35\delta m \leq \min\{k,k_1,k_2\} - D$, so that $k,k_1,k_2$ and $\ell$ are all comparable.
In view of the previous step we may assume $p \leq -m + 35\delta m$.
Using the estimate \eqref{eq:Schur_p2} in Lemma \ref{lem:S+set}\eqref{it:Schur_p2} %, and $\ol{k} \leq 4\delta m + D^2$, 
we see that
\begin{align*}
 \norm{\varphi_p(\Phi(\xi,\eta)) \wt{\varphi}_k(\xi) \wt{\varphi}_{k_1}(\xi-\eta) \wt{\varphi}_{k_2}(\eta)
  \wt{\varphi}_\ell(\xi-2\eta) \wt{\varphi}_r(\eta-2\xi)}_{Sch} \lesssim 2^{p + \frac{r}{2} + \frac{5}{2}k}.
\end{align*}
Using this bound with Schur's test, $|\mathfrak{m}^{k,k_1,k_2}| \lesssim 2^{r-k}$, $\ol{j} \geq (1-\delta) m - 2k$,
and the usual a priori bounds, we see that
\begin{equation*}
\begin{aligned}
& 2^{4k^+} 2^{m-2\min\{k,k_1,k_2\}+k} \norm{P_k B_{m,\ell,p,r}(f_1,f_2)}_{L^2}
\\
& \qquad \lesssim 2^{m-k} \cdot 2^m\cdot 2^{p + \frac{r}{2} + \frac{5}{2}k} \cdot 2^{r-k} \cdot 2^{-k}\e_1 \cdot 2^{-\ol{j}} 2^{-k} \e_1
\\
& \qquad \lesssim 2^{m+\delta m} 2^{p + \frac{3}{2}r + \frac{1}{2}k} \e_1^2,
\end{aligned}
\end{equation*}
which is sufficient to obtain \eqref{propfred3}.

\medskip
\noindent
{\it Proof of \eqref{propfred4}}.
In view of the previous step we may assume $p \leq -m +40\delta m$ and $r \geq -35\delta m$.
Just for the purpose of this proof let us define
\begin{align*}
K(\xi,\eta) := \varphi_p(\Phi(\xi,\eta)) \varphi_\ell(\xi-2\eta) \varphi_r(\xi-2\eta) 
  \wt{\varphi}_k(\xi) \wt{\varphi}_{k_1}(\xi-\eta) \wt{\varphi}_{k_2}(\eta).
\end{align*}
In view of Lemma \ref{lem:S+set}\eqref{it:Schur_p2} we have, recall the notation \eqref{notmaxmin}, 
${\| K(\xi,\eta) \|}_{Sch} + {\| K(\xi,\xi-\eta) \|}_{Sch} \lesssim 2^{p + (1/2)\ul{k} + (3/2)\ol{k}}$.
%\begin{align}\label{eq:pureSchur_k10}
%{\| K(\xi,\eta) \|}_{Sch} + {\| K(\xi,\xi-\eta) \|}_{Sch} \lesssim 2^{p+3\ol{k}^+}.
%\end{align}
Also notice that for any kernel with $\abs{K} \lesssim 1$ one has $\norm{K(\xi,\eta) g(\xi-\eta)}_{Sch}
  \lesssim \norm{K(\xi,\eta) }_{Sch}^{1/2} \norm{g}_{L^2}$.
Then, using Schur's test by estimating in $L^2$ the profile corresponding to the larger localization $2^{\ol{j}}$ we can bound
\begin{align*}
\begin{split}
 \norm{P_k B_{m,\ell,p,r}(f_1,f_2) }_{L^2} & \lesssim 2^m \cdot \big( 2^{p + \frac{1}{2}\ul{k} + \frac{3}{2}\ol{k}} \big)^{\frac{1}{2}}
  \cdot {\| \mathfrak{m}^{k,k_1,k_2} \|}_{S^\infty}  \cdot  {\| f_{\ul{j}} \|}_{L^2} \cdot {\| f_{\ol{j}} \|}_{L^2}
\\
& \lesssim 2^m \cdot 2^{\frac{p}{2} + \frac{3}{4}\ol{k} + \frac{1}{4}\ul{k}} \cdot 2^{\ol{k}-\ul{k}} \cdot 2^{-\ol{j} -\ul{j}} \cdot 2^{-\ol{k} - \ul{k}-4\ol{k}^+}\e_1^2.
\end{split}
\end{align*}
Using the assumptions $p \leq -m +40\delta m$, $\ol{j} \geq (1-\delta)m - 3k_1 + \min\{k,\ell\}$ and $\ul{j} \geq \frac{1}{2}m + 60\delta m$, 
we see that
\begin{align*}
\begin{split}
2^{4k^+} 2^{m-2\min\{k,k_1,k_2\}+k} {\| P_k B_{m,\ell,p,r}(f_1,f_2) \|}_{L^2}
& \lesssim \e_1^2 2^{-39\delta m} \cdot 2^{k-2\min\{k,k_1,k_2\}} \cdot 2^{\frac{3}{4}\ol{k} - \frac{7}{4}\ul{k} } \cdot 2^{3k_1 - \min\{k,\ell\}}
\\
& \lesssim \e_1^2 2^{-39\delta m} \cdot 2^{-\frac{15}{4}\min\{0,k,k_1,k_2,\ell\} + \frac{15}{4}\max\{0,k_1,k_2\}},
\end{split}
\end{align*}
which is sufficient for \eqref{propfred4}, again in view of \eqref{eq:w_bound1}.
\end{proof}

\vskip20pt
\section{The Weighted Estimate: Part II}\label{sec:weightII}

%\subsection{Set up}
Recall that the main weighted bound \eqref{eq:w_bound} is implied by \eqref{eq:w_bound1}.
Combining this fact with the estimates in Proposition \ref{propfred} we can reduce
the proof of the main desired bound to showing that
\begin{equation}
\label{estweightII}
 2^{4k^+} 2^{m-2\min\{k,k_1,k_2\}+k} \norm{ P_k B_{m,\ell,\leq p_0,r}(f_1,f_2) }_{L^2} \lesssim 2^{-4\delta m},
\end{equation}
where
\begin{equation*}
\begin{aligned}
B_{m,\ell,\leq p_0,r}(f,g) & := \mathcal{F}^{-1}
    \int_0^t \tau_m(s)\int_{\R^2} e^{is\Phi(\xi,\eta)} \varphi_{\leq p_0}(\Phi(\xi,\eta))
    \\ 
    &\qquad \times \mathfrak{m}(\xi,\eta) \, \varphi_\ell(\xi-2\eta) \varphi_r(2\xi-\eta)\, \hat{f}(\xi-\eta)\hat{g}(\eta)\, d\eta,
\end{aligned}
\end{equation*}
and whenever
\begin{equation}
\label{restII}
\begin{aligned}
& -5\delta m \leq k,k_1,k_2,\ell \leq 4\delta m + D^2,\quad r \geq -35\delta m,
\\
& p_0 := -m +40\delta m,
\\
& \ol{j} := \max\{ j_1,j_2\} \geq m + \min\{k,\ell\} - 3k_1 -\delta m \geq m -20\delta m ,
\\
& \ul{j} := \min\{ j_1,j_2\} \leq \frac{m}{2} + 60\delta m.
\end{aligned}
\end{equation}

\begin{remark}%\label{rem:nomoreibp}
Intuitively speaking the reductions to the configuration \eqref{restII} have placed us in a framework where neither integration by parts in time nor space produces any gain:
$\abs{\Phi}$ is of the order of $s^{-1}$ and $\abs{\nabla_\eta\Phi}$ is of order about $1$, with $\ol{j}$ of the order about $s$.
Notice that this is not a localization to, but rather away from the resonant set.
\end{remark}

\vskip10pt
\subsubsection*{{\bf Anisotropic Decomposition}}
We now decompose the bilinear term into two pieces, according to the size of $|\xi_1-\eta_1|$:
\begin{equation}
\label{B_q}
\begin{aligned}
& B_{m,\ell,\leq p_0,r}(f_1,f_2) = \mathcal{B}_{\leq q_0}(f_1,f_2) + \sum_{q > q_0} \mathcal{B}_{q}(f_1,f_2) , 
\qquad \qquad  q_0:= -\frac{m}{20},
\\
& \mathcal{B}_{\ast}(f,g) := \mathcal{F}^{-1}
    \int_0^t \tau_m(s)\int_{\R^2} e^{is\Phi(\xi,\eta)} \varphi_{\leq p_0}(\Phi(\xi,\eta))  \varphi_\ast(\xi_1-\eta_1)
    m_{\ell,r}(\xi,\eta) \, \hat{f}(\xi-\eta)\hat{g}(\eta)\, d\eta,
\\
\\
& m_{\ell,r}(\xi,\eta) := \mathfrak{m}^{k,k_1,k_2}(\xi,\eta) \varphi_\ell(\xi-2\eta) \varphi_r(2\xi-\eta),
\end{aligned}
\end{equation}
see also the notation \eqref{mloc}, and recall the formula \eqref{FSBsym} for the symbol $\mathfrak{m}$.
Note that in order to simplify notation we suppress the dependence on $m,\ell,p_0,r$ in $\mathcal{B}_{\ast}$.

\vskip10pt
\subsection{Estimate of $\mathcal{B}_{\leq q_0}$}

Here we show how we can exploit the smallness in the localization in $|\xi_1-\eta_1|$ to close our bounds.
The main tool here is given by improved Schur kernel bounds.

Let us introduce the notation
\begin{align*}%\label{K_q0}
K_{q_0}(\xi,\eta) := \varphi_{\leq p_0}(\Phi(\xi,\eta))  \varphi_{\leq q_0} (\xi_1-\eta_1) m_{\ell,r}(\xi,\eta),
\end{align*}
where $m_{\ell,r}$ is as in \eqref{B_q}, and so that
\begin{align*}%\label{B_q0}
\begin{split}
& \mathcal{B}_{\leq q_0}(f,g) = \mathcal{F}^{-1} \int_0^t \tau_m(s) \int_{\R^2} e^{is\Phi(\xi,\eta)} K_{q_0}(\xi,\eta) \, \hat{f}(\xi-\eta)\hat{g}(\eta)\, d\eta.
\end{split}
\end{align*}

\begin{prop}\label{prop:qsmall}
Under the assumptions \eqref{restII} the following holds true:
\begin{align}
\label{estweightqsmall}
\begin{split}
& 2^{4k^+} 2^{m-2\min\{k,k_1,k_2\}+k} \norm{P_k \mathcal{B}_{\leq q_0}(f_1,f_2) }_{L^2} \lesssim 2^{-4\delta m}.
\end{split}
\end{align}
\end{prop}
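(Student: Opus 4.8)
The plan is to prove \eqref{estweightqsmall} by a pure Schur test, exploiting the fact that the additional localization $\varphi_{\leq q_0}(\xi_1-\eta_1)$ with $q_0 = -m/20$ forces $\xi_1-\eta_1$ to be very small, which (together with $\varphi_\ell(\xi-2\eta)$) pins $\eta$ to a very thin set once $\xi$ is fixed, and symmetrically pins $\xi$ once $\eta$ is fixed. Concretely, first I would observe that the bilinear operator $\mathcal{B}_{\leq q_0}$ can be estimated by
\begin{equation*}
\norm{P_k \mathcal{B}_{\leq q_0}(f_1,f_2)}_{L^2} \lesssim 2^m \cdot \norm{\mathfrak{m}^{k,k_1,k_2}}_{S^\infty} \cdot \norm{K_{q_0}^{\mathrm{norm}}}_{Sch} \cdot \norm{f_1}_{L^2} \norm{f_2}_{L^2},
\end{equation*}
where $K_{q_0}^{\mathrm{norm}}$ denotes the normalized cutoff kernel (the product of the $\varphi$-cutoffs, without the symbol size, which is absorbed into the $S^\infty$ norm via the symbol bound \eqref{symbound100} or \eqref{symbound104}), and the $2^m$ comes from the time integral $\int \tau_m(s)\,ds \lesssim 2^m$. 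The key point, to be extracted from Lemma \ref{lem:S+set} (the Schur bounds for kernels localized in $\Phi$, in $\xi-2\eta$, and here also in $\xi_1-\eta_1$), is that the extra factor $\varphi_{\leq q_0}(\xi_1-\eta_1)$ improves the Schur norm by roughly a factor $2^{q_0/2}$ (a half-power, as is typical when one extra linear constraint of size $2^{q_0}$ is added and the remaining direction is controlled by the $\Phi$-localization of size $2^{p_0}$). Since $p_0 = -m+40\delta m$ and $q_0 = -m/20$, this produces a net gain like $2^{p_0/2} 2^{q_0/2} 2^{O(\delta m) + O(\max\{k,k_1,k_2,\ell,r\})}$ in the Schur norm, which, multiplied by the $2^m$ from the time integration and by the remaining polynomial-in-$2^{k}$ weights coming from $2^{4k^+}2^{m-2\min\{k,k_1,k_2\}+k}$, the symbol size, and the a priori $L^2$ bounds, still leaves room for a gain of $2^{-4\delta m}$ once one uses the frequency restrictions $-5\delta m \leq k,k_1,k_2,\ell \leq 4\delta m + D^2$ and $r \geq -35\delta m$ from \eqref{restII}.

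The steps, in order, would be: (i) set up the $L^2 \times L^2$ Schur estimate as above, absorbing the symbol into $S^\infty$ via \eqref{symbound100} (in the high-high regime where $|k_1-k_2|\leq 10$, which is the relevant one here since $\ell,r$ are both constrained, one has $\norm{\mathfrak{m}^{k,k_1,k_2}}_{S^\infty} \lesssim 2^{k-\min\{k_1,k_2\}}$, possibly refined using $\varphi_\ell$ or $\varphi_r$); (ii) invoke the appropriate part of Lemma \ref{lem:S+set} to bound $\norm{K_{q_0}^{\mathrm{norm}}}_{Sch}$, carefully identifying which three scalar localizations ($\Phi \sim 2^{p_0}$, $\xi-2\eta \sim 2^{\ell}$ or $\xi_1-\eta_1 \lesssim 2^{q_0}$, and one of the frequency annuli) are the active ones — the geometry is that on the support one has both $\Phi$ small and $\xi_1-\eta_1$ small, and these two constraints together with the annular constraints on $|\xi|,|\xi-\eta|,|\eta|$ cut out a set of measure $\lesssim 2^{p_0+q_0} 2^{O(\max)}$ in each fiber; (iii) collect all the powers of $2$: combine $2^{4k^+}$, $2^{m-2\min\{k,k_1,k_2\}+k}$, the $2^m$ from $\tau_m$, the $S^\infty$ bound, the Schur bound $\sim 2^{p_0/2+q_0/2+O(\delta m)+O(\max)}$, and the a priori bounds $\norm{f_\nu}_{L^2} \leq \e_1 2^{-(1+\delta)(k_\nu+j_\nu)}2^{-4k_\nu^+}$ from \eqref{apriori100} — actually, since there is no gain available from the spatial localizations $j_1,j_2$ in this regime (we are away from resonances and $\ol j \sim m$ is not helpful against a Schur bound), one should use the cruder $L^2$ bound $\norm{P_{k_\nu} f}_{L^2} \lesssim \e_1$ or the $\dot H^{-1}$ bound \eqref{aprioriH-1}, whichever closes; (iv) verify that the exponent of $2$ is $\leq -4\delta m$ using $p_0 + q_0 \leq -m$, $m - 2\min\{k,k_1,k_2\} + k \leq m + O(\delta m)$, and the range restrictions on $k,k_1,k_2,\ell,r$, so that the dominant term is $2^{m} \cdot 2^{(p_0+q_0)/2} = 2^{m/2 + O(\delta m) - m/40} = 2^{m/2 + \cdots}$ — wait, this requires a second half-power of decay, which must come from either a second application of integration-by-parts-in-$\Phi$-type smallness or from using the full $2^{p_0}$ (not just $2^{p_0/2}$) in the Schur bound when the kernel is additionally localized in $\xi_1-\eta_1$; the correct accounting is that the two constraints $\Phi\sim 2^{p_0}$ and $\xi_1-\eta_1\sim 2^{q_0}$ together with one frequency annulus give Schur norm $\lesssim 2^{p_0/2+q_0/2}2^{O(\max)}$ and the second frequency annulus is free, so $2^m\cdot 2^{p_0/2+q_0/2} = 2^{m/2}2^{20\delta m}2^{-m/40} = 2^{m(1/2 - 1/40 + 20\delta)} \cdot$ — this is still growing, so one in fact needs the $\dot H^{-1}$ improvement: $\norm{f_\nu}_{L^2} \lesssim 2^{k_\nu}\norm{|\nabla|^{-1}f}_{L^2} \lesssim 2^{k_\nu}\e_0$, contributing $2^{k_1+k_2}$, which in the high-high regime with $k_1,k_2$ possibly as small as $-5\delta m$ is a loss, not a gain; so one must instead use a \emph{two-fold} improvement in the Schur bound. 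I expect the actual argument to use Lemma \ref{lem:S+set}\eqref{it:Schur_p2} with \emph{both} the $\Phi$ and the $\xi_1-\eta_1$ localizations entering as linear constraints transverse to the remaining parameter, yielding a Schur bound of order $2^{p_0}\cdot 2^{q_0}\cdot 2^{O(\max)}$ — i.e., a full power of each — in which case $2^m\cdot 2^{p_0+q_0}\cdot 2^{O(\delta m)} = 2^{m}2^{-m+40\delta m}2^{-m/20}2^{O(\delta m)} = 2^{-m/20 + O(\delta m)} \leq 2^{-4\delta m}$ for $\delta$ small, which closes.

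The main obstacle I anticipate is exactly this bookkeeping of the Schur-norm gain: correctly determining whether the extra localization $\varphi_{\leq q_0}(\xi_1-\eta_1)$ buys a half-power $2^{q_0/2}$ or a full power $2^{q_0}$ of improvement, which depends delicately on the relative geometry of the level sets of $\Phi$, the line $\{\xi_1=\eta_1\}$, the line $\{\xi=2\eta\}$, and the line $\{\eta=2\xi\}$ on the relevant (order-one, by \eqref{restII}) frequency region. The resolution should come from the non-degeneracy of the Hessian of $\Phi$ (already used for the dispersive estimate \eqref{eq:dispest}, where the Hessian of $\xi_1|\xi|^{-2}$ is $4|\xi|^{-6}$, hence nondegenerate) and the transversality of $\{\xi_1-\eta_1 = \mathrm{const}\}$ to the $\eta$-level sets of $\Phi(\xi,\cdot)$ — note $\partial_{\eta_1}\Phi$ is one component of $\nabla_\eta\Phi$, which by \eqref{eq:nabla_etaPhi} has size $\approx 1$ in the region \eqref{restII}, so the constraint $|\xi_1-\eta_1|\leq 2^{q_0}$ is genuinely transverse to the constraint $|\Phi|\leq 2^{p_0}$ and the two together cut the $\eta$-fiber down to a set of area $\lesssim 2^{p_0+q_0}$ up to harmless $2^{O(\max\{k,k_1,k_2,\ell,r\})}$ factors. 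Once that geometric fact is in hand (it is presumably packaged in Lemma \ref{lem:S+set}), the rest is the routine collection of exponents sketched above, and the conclusion \eqref{estweightqsmall} follows for all $\delta \leq \delta_0$ and $m$ large, the finitely many small $m$ being trivial.
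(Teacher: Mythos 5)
Your overall scheme (a factor $2^m$ from the $s$-integral times a Schur bound on the kernel localized in $\Phi$ and in $\xi_1-\eta_1$) is the right skeleton, but there are two decisive gaps. First, you explicitly discard the spatial localizations and propose to estimate both profiles by $\|P_{k_\nu}f\|_{L^2}\lesssim \e_1$. The paper's proof does the opposite: it puts the profile with the \emph{smaller} localization $\ul{j}$ into the kernel via $\|\widehat{Q_{\ul{j}k}f}\|_{L^\infty}\lesssim \e_1 2^{O(\delta m)}$ (see \eqref{apriori102}) and estimates the one with $\ol{j}\geq m-20\delta m$ in $L^2$, where \eqref{apriori100} gives $\|f_{\ol{j}}\|_{L^2}\lesssim \e_1 2^{-\ol{j}}\lesssim \e_1 2^{-m+25\delta m}$. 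This extra full power $2^{-m}$ is indispensable: the prefactor $2^{m-2\min\{k,k_1,k_2\}+k}\approx 2^{m+O(\delta m)}$ together with the $2^m$ from the time integral means the remaining factors must supply roughly $2^{-2m}$, and no Schur bound on a set where $|\Phi|\lesssim 2^{p_0}=2^{-m+40\delta m}$ and $|\xi_1-\eta_1|\lesssim 2^{q_0}=2^{-m/20}$ can do better than $2^{p_0+q_0}\approx 2^{-21m/20+40\delta m}$, which misses by $2^{19m/20}$. (Your own final tally, $2^m\cdot 2^{p_0+q_0}\leq 2^{-4\delta m}$, also omits the weight $2^{m-2\min\{k,k_1,k_2\}+k}$ entirely.)

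Second, the full-power gain $2^{p_0+q_0}$ you posit is not available. Transversality of the vertical strip $\{|\xi_1-\eta_1|\lesssim 2^{q_0}\}$ to the level curves of $\Phi(\xi,\cdot)$ requires a lower bound on $\partial_{\eta_2}\Phi$ (not on $\partial_{\eta_1}\Phi$, and in any case $|\nabla_\eta\Phi|\approx 1$ controls neither component individually). On the support one computes $\partial_{\eta_2}\Phi = 2\eta_1\eta_2|\eta|^{-4}+O(2^{q_0+O(\delta m)})$, which degenerates when $\eta_1$ is small. This is exactly why the paper splits $\mathcal{B}_{\leq q_0}$ according to the size of $|\eta_1|$, using the algebraic consequence $|\eta_1|\,\big||\eta|^{-2}-|\xi|^{-2}\big|\lesssim 2^{q_0+10\delta m}$ of the two localizations: when $|\eta_1|\lesssim 2^{q_0/3+10\delta m}$ the gain comes not from the measure of the support but from the null factor $\xi\cdot\eta^{\perp}$ in $\mathfrak{m}$, which is then $\lesssim 2^{q_0/3+15\delta m}$; when $|\eta_1|\gtrsim 2^{q_0/3+10\delta m}$ one instead deduces $|\xi_2^2-\eta_2^2|\lesssim 2^{2q_0/3+16\delta m}$, pinning $\eta$ near $(\xi_1,\pm\xi_2)$ and shrinking the support. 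Either way the extra gain over the bare $2^{p_0+O(\delta m)}$ Schur bound is only a fractional power ($2^{q_0/3}$ or $2^{q_0/6}$) -- enough to absorb the $O(\delta m)$ losses once the $2^{-\ol{j}}$ decay is in play, but far from the full $2^{q_0}$ your accounting requires. As written, the proposal does not close.
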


\begin{proof}
Observe that
\begin{align*}
2^{p_0} \gtrsim |\Phi(\xi,\eta) | =
  \Big| (\xi_1-\eta_1)\left(\frac{1}{|\eta|^2}-\frac{1}{|\xi-\eta|^2}\right)-\eta_1\left(\frac{1}{|\eta|^2}-\frac{1}{|\xi|^2}\right) \Big|.
\end{align*}
Since on the support of the integral \eqref{B_q} we have $|\xi_1-\eta_1|\leq 2^{q_0}$, we see that
\begin{equation}
\label{qsmall2}
\abs{\eta_1} \abs{\frac{1}{\abs{\eta}^2}-\frac{1}{\abs{\xi}^2} } \lesssim 2^{p_0} + 2^{q_0} \abs{\frac{1}{\abs{\eta}^2}-\frac{1}{\abs{\xi-\eta}^2} }
  \lesssim 2^{q_0 + 10\delta m}.
\end{equation}
We then distinguish two main cases depending on the size of $\abs{\eta_1}$ relative to $2^{\frac{q_0}{3} + 10\delta m}$.
More precisely we write
\begin{align*}%\label{B_q0split}
\begin{split}
& \mathcal{B}_{\leq q_0}(f,g) = \mathcal{B}_{\leq q_0}^{-}(f,g) + \mathcal{B}_{\leq q_0}^{+}(f,g),
\\
& \mathcal{B}_{\leq q_0}^{\pm}(f,g) := \mathcal{F}^{-1} \int_0^t \tau_m(s) \int_{\R^2} e^{is\Phi(\xi,\eta)} K_{q_0}(\xi,\eta) \chi_\pm(\eta_1)
  \, \what{f}(\xi-\eta)\what{g}(\eta) \, d\eta,
\\
& \chi_-(\eta_1) := \varphi_{\leq \frac{q_0}{3} + 10\delta m}(\eta_1), \quad \chi_+(\eta_1) := 1 - \chi_-(\eta_1).
\end{split}
\end{align*}

\medskip
\noindent
\textit{Estimate of $\mathcal{B}_{\leq q_0}^-$}.
In this case $|\eta_1| \lesssim 2^{\frac{q_0}{3} + 10\delta m}$ and we see that
\begin{align*}
|\xi\cdot\eta^\perp| \lesssim |(\xi_1-\eta_1) \eta_2| + |(\xi_2-\eta_2) \eta_1| \lesssim 2^{\frac{q_0}{3} + 15\delta m}.
\end{align*}
This gives us an improved estimate on the symbol $\mathfrak{m}$, see \eqref{FSBsym}, 
and hence on the kernel: Using %also \eqref{eq:Schur_p2} from 
Lemma \ref{lem:S+set}\eqref{it:Schur_p2} and the restrictions \eqref{restII} %with the localizations in $k,k_1$ and $k_2$ gives
we see that
\begin{align*}
\norm{K_{q_0}(\xi,\eta)\chi_-(\eta_1)}_{Sch} + \norm{K_{q_0}(\xi,\xi-\eta)\chi_-(\xi_1-\eta_1)}_{Sch}
  % & \lesssim 2^{\frac{q_0}{2}+15\delta m} \cdot 2^{k+\ell-2k_1-2k_2}  \cdot 2^{p_0} 2^{\frac{1}{2}(k+k_2)+2k_1} \\
  & \lesssim 2^{\frac{q_0}{3}}  \cdot 2^{p_0} \cdot  2^{40\delta m}.
\end{align*}
We then apply Schur's test incorporating the profile with localization $\ul{j}$ in the kernel 
and estimating the one with largest $\ol{j}$ in $L^2$:
Using the a priori bounds \eqref{apriori100} and \eqref{apriori102}
together with the restrictions \eqref{restII} we have
\begin{equation*}
\begin{aligned}
{\| \mathcal{B}_{\leq q_0}^-(f_1,f_2)\|}_{L^2}
  &\lesssim 2^m \cdot 2^{\frac{q_0}{3} + p_0 + 40\delta m} \cdot \e_1 2^{5\delta m} \cdot \e_1 2^{-m + 25 \delta m}
\\
  &\lesssim 2^{-m} \cdot 2^{-\frac{m}{60}} \cdot 2^{110 \delta m} \cdot \e_1^2.
\end{aligned}
\end{equation*}
This is sufficient to obtain \eqref{estweightqsmall},
given that the restrictions \eqref{restII} imply %$2^{4k^+} 2^{m-2\min\{k,k_1,k_2\}+k} \leq 2^{m} 2^{30\delta m}$.
$2^{m-2\min\{k,k_1,k_2\}+k} \leq 2^{m} 2^{15\delta m}$ and $\delta \leq 2 \cdot 10^{-4}$.

\medskip
\noindent
%\textit{Case 2: $|\eta_1| \geq 2^{\frac{q_0}{2} + 50\delta m}$.}
\textit{Estimate of $\mathcal{B}_{\leq q_0}^+$}.
In this case $|\eta_1| \gtrsim 2^{\frac{q_0}{3} + 10\delta m}$ and in view of \eqref{qsmall2} 
we must have $\abs{\abs{\eta}^{-2} - \abs{\xi}^{-2}} \leq 2^{\frac{2}{3}q_0}$.
Since $|\eta|^{-2} - |\xi|^{-2} = |\xi|^{-2} |\eta|^{-2} (\xi_2^2 - \eta_2^2 + \xi_1^2 - \eta_1^2)$ we see that
\begin{align*}%\label{qsmall20}
|\xi_2^2 - \eta_2^2| \lesssim |\xi|^{2} |\eta|^{2} 2^{\frac{2}{3}q_0} + |\xi_1^2 - \eta_1^2| \lesssim 2^{\frac{q_0}{2} + 16\delta m}.
\end{align*}
Therefore we know that on the support of the integral
\begin{align*}%\label{qsmall21}
\begin{split}
& |\xi_1-\eta_1| \lesssim 2^{q_0}, \qquad |\xi_2^2 - \eta_2^2| \lesssim 2^{\frac{2}{3}q_0 + 16\delta m}, \qquad
|\nabla_\xi \Phi(\xi,\eta)|, |\nabla_\eta \Phi(\xi,\eta)| \geq 2^{-50\delta m}, 
%\\ & |\nabla_\xi \Phi(\xi,\eta)| \geq 2^{-50\delta m}, \qquad |\nabla_\eta \Phi(\xi,\eta)| \geq 2^{-20\delta m}, 
\end{split}
\end{align*}
see \eqref{nabla_xiPhi} and the restrictions \eqref{restII}.
Using these we claim that we can estimate
\begin{align}
\label{qsmall30} 
 \norm{K_{q_0}(\xi,\eta)\chi_+(\eta_1)}_{Sch} + \norm{ K_{q_0}(\xi,\xi-\eta)\chi_+(\xi_1-\eta_1) }_{Sch}
  \lesssim 2^{\frac{q_0}{6}}  \cdot 2^{p_0} \cdot  2^{70\delta m}.
\end{align}

To see why this holds true first observe that for the support of the kernel we have
\begin{align*}
{\supp} (K_{q_0}(\xi,\eta)) \subseteq \big\{ (\xi,\eta) \in \R^2\times\R^2 \, : \, \eta \in S^+(\xi) \cup S^-(\xi) \big\},
\end{align*}
where
\begin{align*}
S^\pm(\xi) := \big\{ \eta \in \R^2 \, : \, \abs{\Phi(\xi,\eta)} \lesssim 2^{p_0}, \quad \abs{\nabla_\eta \Phi(\xi,\eta)}, 
\abs{\nabla_\xi \Phi(\xi,\eta)} \gtrsim 2^{-50\delta m},
  \\ \, \abs{\eta_1 - \xi_1 } \lesssim 2^{q_0}, \quad \abs{\eta_2 \pm \xi_2} \lesssim 2^{\frac{q_0}{3} + 8\delta m} \big\}.
\end{align*}
From this observation, and arguments similar to the ones in Lemma \ref{lem:S+set}\eqref{it:Schur_p1}, it follows that
\begin{align*}
\sup_{\xi \in \R^2} \int_{\R^2} \abs{K_{q_0}(\xi,\eta) \chi_+(\eta_1)} \, d\eta \lesssim 2^{p_0 + 60\delta m} \cdot 2^{\frac{q_0}{3} + 8\delta m},
\end{align*}
having also used $|\mathfrak{m}| \lesssim 2^{10\delta m}$.
The same bound can be also deduced for $K_{q_0}(\xi,\xi-\eta) \chi_+(\xi_1-\eta_1)$.
Combing these bounds with the similar but cruder estimate
\begin{align*}
\sup_{\eta \in \R^2} \Big( \int_{\R^2} \abs{K_{q_0}(\xi,\eta)} \, d\xi + \int_{\R^2} \abs{K_{q_0}(\xi,\xi-\eta)} \, d\xi \Big)
  \lesssim 2^{p_0 + 65\delta m}
\end{align*}
we see that \eqref{qsmall30} follows.

We finally use \eqref{qsmall30} and Schur's test to obtain
\begin{equation*}
\begin{aligned}
\norm{\mathcal{B}_{\leq q_0}^+(f_1,f_2)}_{L^2} %= {\Big\| \int_{\R^2} K_q(\xi,\eta) \hat{f}_{j_{\max}}(\eta) d\eta \Big\|}_{L^2}
  &\lesssim 2^m \cdot 2^{\frac{q_0}{6} + p_0 + 70\delta m} \cdot \e_1 2^{5\delta m}\cdot \e_1 2^{-m + 25 \delta m}
\\
  &\lesssim 2^{-m} 2^{-\frac{m}{120}} \cdot 2^{140 \delta m} \e_1^2.
\end{aligned}
\end{equation*}
We can then conclude as before, since $\delta$ is small enough.
This suffices to prove the desired bound \eqref{estweightqsmall} and concludes the proof of the Proposition.
\end{proof}

\vskip10pt
\subsection{Estimates of the Terms $\mathcal{B}_q$}

In view of the decomposition \eqref{B_q} and Proposition \ref{prop:qsmall}, the main bound \eqref{estweightII} can be reduced to showing
\begin{align}
\label{estweightIIb}
\begin{split}
& 2^{4k^+} 2^{m-2\min\{k,k_1,k_2\}+k} \norm{P_k \mathcal{B}_q(f_1,f_2) }_{L^2} \lesssim 2^{-5\delta m}, \qquad q\geq q_0,
\end{split}
\end{align}
under the restrictions \eqref{restII}.
This bound can in turn be reduced to the proof of the following Proposition about Fourier integral operators.

\begin{prop}\label{L2prop}
Let
\begin{equation}
\label{L2prop0}
p = -m + 40\delta m, \qquad -\frac{m}{20} \leq q \leq 4\delta m+D^2,
\end{equation} 
with $\delta \leq 10^{-4}$.
For any $g \in L^2$ and $s \in [2^{m-1},2^{m+1}]$ define the operator
\begin{equation}
\label{L2prop1}
\begin{aligned}
 & T_{p,q} (g) (\xi) := \int_{\R^2} e^{is \Phi(\xi,\eta)} \varphi_{\leq p}(\Phi(\xi,\eta)) \varphi_q(\xi_1-\eta_1) \rho(\xi,\eta) \, g(\eta) \, d\eta,
\\
 & \Phi(\xi,\eta) = -L(\xi) + L(\xi-\eta) + L(\eta), \qquad L(x)=\frac{x_1}{\abs{x}^2},
\end{aligned}
\end{equation}
and assume that the symbol $\rho$ has the properties
\begin{equation}
\label{L2prop2.0}
\begin{aligned}
\mathrm{supp}(\rho) \subseteq \big\{ &(\xi,\eta)\in\R^2\times\R^2 \, : \, 2^{-A\delta m} \lesssim \abs{\xi},\abs{\eta} \lesssim 2^{A\delta m}, \,
  \\ 
&\abs{\xi-\eta},\abs{\xi-2\eta} \gtrsim 2^{-A\delta m}, \, \abs{2\xi-\eta} \gtrsim 2^{-7A\delta m}   \big\}
\end{aligned}
\end{equation}
for some absolute positive constant $A \leq 5$, %$A \leq 5, B \leq 4$, 
and
\begin{align}
\label{L2prop2}
%& \abs{\rho(\xi,\eta)} \lesssim 2^{20\delta m}, \qquad 
\big| D^\alpha_{(\xi,\eta)} \rho(\xi,\eta) \big| \lesssim 2^{\abs{\alpha}(m/2 + 60\delta m)} 2^{20\delta m}, \quad \abs{\alpha}\geq 0.
\end{align}
Then $T_{p,q}$ satisfies the operator bound
\begin{align}
\label{L2propconc}
 \norm{T_{p,q}}_{L^2 \rightarrow L^2} \lesssim 2^{-m - 100\delta m}.
\end{align}
\end{prop}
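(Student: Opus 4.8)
The plan is to establish \eqref{L2propconc} through a $TT^*$ argument exploiting the non-degeneracy of the mixed Hessian of $\Phi$. Since $\norm{T_{p,q}}_{L^2\to L^2}^2 = \norm{T_{p,q}T_{p,q}^*}_{L^2\to L^2}$ and $T_{p,q}T_{p,q}^*$ has integral kernel
\[
 K(\xi,\xi') = \int_{\R^2} e^{is[\Phi(\xi,\eta)-\Phi(\xi',\eta)]}\, \varphi_{\leq p}(\Phi(\xi,\eta))\,\varphi_{\leq p}(\Phi(\xi',\eta))\,\varphi_q(\xi_1-\eta_1)\,\varphi_q(\xi_1'-\eta_1)\,\rho(\xi,\eta)\,\ol{\rho(\xi',\eta)}\,d\eta,
\]
it suffices, by Schur's test and the symmetry $K(\xi,\xi')=\ol{K(\xi',\xi)}$, to prove $\sup_{\xi}\int_{\R^2}\abs{K(\xi,\xi')}\,d\xi' \lesssim 2^{-2m-200\delta m}$. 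Writing $\Psi(\eta):=\Phi(\xi,\eta)-\Phi(\xi',\eta)$, and using $\Phi(\xi,\eta)=-L(\xi)+L(\xi-\eta)+L(\eta)$, we have $\nabla_\eta\Psi = \nabla L(\xi'-\eta)-\nabla L(\xi-\eta)$ and $\nabla_\xi\nabla_\eta\Phi(\xi,\eta) = -\mathrm{Hess}\,L(\xi-\eta)$.

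Two geometric inputs drive the argument. First, on $\mathrm{supp}\,\rho$ all of $\abs{\xi},\abs{\eta},\abs{\xi-\eta},\abs{\xi-2\eta},\abs{2\xi-\eta}$ lie in $[2^{-C\delta m},2^{C\delta m}]$ for an absolute $C$, so by \eqref{nabla_xiPhi} both $\abs{\nabla_\xi\Phi}$ and $\abs{\nabla_\eta\Phi}$ are comparable to $2^{O(\delta m)}$, bounded above \emph{and} below. Second, $\mathrm{Hess}\,L(x)$ is non-degenerate, with Hessian determinant of magnitude $4\abs{x}^{-6}$ — the same non-degeneracy that underlies the dispersive estimate \eqref{eq:dispest} — and, since $L$ is harmonic, its two eigenvalues have the common magnitude $2\abs{x}^{-3}$, so that $\mathrm{Hess}\,L$ is $2\abs{x}^{-3}$ times a reflection. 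Hence $\abs{\nabla_\eta\Psi}=\abs{\nabla L(\xi'-\eta)-\nabla L(\xi-\eta)}\gtrsim 2^{-C\delta m}\abs{\xi-\xi'}$ on $\mathrm{supp}\,\rho$, with the angular distortion produced by $\mathrm{Hess}\,L$ uniformly controlled. We also record that at every point of $\mathrm{supp}\,\rho$ at least one of $\abs{\partial_{\eta_1}\Phi},\abs{\partial_{\eta_2}\Phi}$ is $\gtrsim 2^{-C\delta m}$, which is checked by direct inspection, distinguishing $\abs{\eta_1}$ small from $\abs{\eta_1}$ not small exactly as in the proof of Proposition \ref{prop:qsmall}.

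Fixing $\xi$, decompose $\xi'$ dyadically by $2^{\ell'}\approx\abs{\xi-\xi'}$ (only $\ell'\lesssim C\delta m$ contribute); one may also restrict the angle between $\xi-\xi'$ and the exceptional one-dimensional direction $\big[\mathrm{Hess}\,L(\xi-\eta)\big]^{-1}\nabla_\eta\Phi(\xi,\eta)$ to be $\gtrsim 2^{-5\delta m}$, the complementary thin sectors being disposed of in the spirit of the degenerate case of Proposition \ref{prop:qsmall} using the anisotropic strip $\{\abs{\xi_1-\eta_1}\approx 2^q\}$ and the restrictions \eqref{restII}. On each annular-sector piece, change variables $\eta\mapsto(\zeta_1,\zeta_2)$ with $\zeta_1=\Phi(\xi,\eta)$ and $\zeta_2$ an arclength parameter along the level curves of $\Phi(\xi,\cdot)$, so that $d\eta=\abs{\nabla_\eta\Phi(\xi,\eta)}^{-1}d\zeta_1 d\zeta_2$, the cutoff $\varphi_{\leq p}(\Phi(\xi,\eta))=\varphi_{\leq p}(\zeta_1)$ becomes inert, and the $\zeta_1$-integral costs only $\approx 2^p$. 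Integrating by parts repeatedly in $\zeta_2$, along which $\Psi$ oscillates at rate $\abs{\partial_{\zeta_2}\Psi}$ equal to the component of $\nabla_\eta\Phi(\xi',\eta)$ orthogonal to $\nabla_\eta\Phi(\xi,\eta)$, which is $\approx 2^{-C\delta m}\abs{\xi-\xi'}$ by the second input, the crucial observation is that the worst amplitude loss — from differentiating the remaining cutoff $\varphi_{\leq p}(\Phi(\xi',\eta))$ — is a factor $2^{-p}\abs{\partial_{\zeta_2}\Phi(\xi',\eta)}\lesssim 2^{m-40\delta m}2^{C\delta m}\abs{\xi-\xi'}$, while each integration by parts gains $(s\abs{\partial_{\zeta_2}\Psi})^{-1}\lesssim 2^{-m}2^{C\delta m}\abs{\xi-\xi'}^{-1}$, so the factors of $\abs{\xi-\xi'}$ cancel and the net gain is $2^{-40\delta m+C\delta m}<1$, uniformly in $\ell'$ (the losses from differentiating $\varphi_q(\xi_1-\eta_1)$, at most $2^{-q}\leq 2^{m/20}$, and $\rho$, at most $2^{m/2+60\delta m}$ per order by \eqref{L2prop2}, are more favorable in the relevant range). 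Carrying $M=M(\delta)$ integrations by parts, together with the leading bound $\abs{K(\xi,\xi')}\lesssim 2^{40\delta m}\cdot 2^p\cdot 2^{p+C\delta m}\abs{\xi-\xi'}^{-1}$ (the last factor being the $\zeta_2$-length on which $\varphi_{\leq p}(\Phi(\xi',\cdot))\neq 0$) and integration of $\abs{\xi-\xi'}^{-1}$ over the annulus, yields $\sup_\xi\int\abs{K(\xi,\xi')}\,d\xi'\lesssim 2^{-2m-200\delta m}$ after summing over $\ell'$, which is \eqref{L2propconc}.

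The main obstacle is exactly the tension between the extreme thinness of $\varphi_{\leq p}(\Phi)$ (scale $2^{-m+40\delta m}$) and the large derivative losses $2^{m/2+60\delta m}$ carried by $\rho$ — the latter reflecting the spatial localization at scale $\approx 2^{\ul{j}}\lesssim 2^{m/2+60\delta m}$ of the ``small'' profile absorbed into $\rho$ in the application, cf.\ the restrictions \eqref{restII}. One can therefore afford only boundedly many integrations by parts, each of which must be essentially loss-free; the mechanism above — resolving one copy of the resonant cutoff by a change of variables and noting that the derivative of the \emph{other} copy is proportional to the oscillation itself — is what makes this possible, but it rests entirely on the non-degeneracy, and the equal-magnitude-eigenvalue structure, of $\mathrm{Hess}\,L(\xi-\eta)=-\nabla_\xi\nabla_\eta\Phi$. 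Establishing the transversality away from the exceptional direction, and cleanly handling that direction and the degenerate configurations where $\partial_{\eta_i}\Phi$ vanishes via the anisotropic strip $\{\abs{\xi_1-\eta_1}\approx 2^q\}$ and arguments parallel to Proposition \ref{prop:qsmall}, is the technical heart of the proof; this is also where the connection to the papers \cite{DIPP,DIPau} enters, although here the $TT^*$ mechanism serves to extract extra oscillation rather than to prove an energy estimate.
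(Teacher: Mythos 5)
Your skeleton matches the paper's: a $TT^\ast$ argument, integration by parts along the level sets of $\Phi(\xi,\cdot)$ (your arclength variable $\zeta_2$ is the paper's direction $\mathcal{V}=\nabla_\eta^\perp\Phi/\abs{\nabla_\eta\Phi}$), the observation that this annihilates $\varphi_{\leq p}(\Phi(\xi,\eta))$ while the derivative of $\varphi_{\leq p}(\Phi(\xi',\eta))$ is proportional to the tangential derivative $\mathcal{D}=\partial_{\zeta_2}[\Phi(\xi,\eta)-\Phi(\xi',\eta)]$ itself, so that the worst loss per step is $2^{-p}/s=2^{-40\delta m}$ (this is exactly \eqref{eq:nolossibp}--\eqref{eq:nolossibp'}), and the accounting of the losses from $\rho$ and $\varphi_q$. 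The genuine gap is in the transversality claim $\abs{\partial_{\zeta_2}\Psi}\approx 2^{-C\delta m}\abs{\xi-\xi'}$, on which every integration by parts rests. You correctly identify that this fails along the exceptional direction $[\mathrm{Hess}\,L(\xi-\eta)]^{-1}\nabla_\eta\Phi(\xi,\eta)$, but that direction depends on $\eta$, and over the $\eta$-support (a $2^{p+O(\delta m)}$-neighborhood of a level curve of length $2^{O(\delta m)}$) it sweeps out an $O(1)$ arc of directions. So for a typical fixed $\xi'$ the degeneracy occurs on a curve's worth of $\eta$ inside the integration domain, and your sector decomposition in $\xi'$ does not localize it. Even granting a well-defined thin sector, discarding it by measure cannot work: the trivial kernel bound is $\abs{S(\xi,\xi')}\lesssim 2^{p+O(\delta m)}$, the sector has $\xi'$-measure $\gtrsim 2^{-5\delta m+O(\delta m)}$, so its contribution to $\sup_\xi\int\abs{S}\,d\xi'$ is $\gtrsim 2^{-m+35\delta m}$, off by essentially a full factor of $2^{m}$ from the required $2^{-2m-200\delta m}$; the strip $\abs{\xi_1-\eta_1}\approx 2^q$ and \eqref{restII} only improve this to $2^{p+q+O(\delta m)}\geq 2^{-1.05m+O(\delta m)}$. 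Nothing in the spirit of Proposition \ref{prop:qsmall} closes this.

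What the paper does instead, and what your proposal is missing, is a mechanism that bounds $\mathcal{D}$ from below \emph{uniformly in $\eta$} once $\abs{\xi-\xi'}\gtrsim 2^{p/2}$. Three ingredients are needed. First, the algebraic identity \eqref{eq:C-eq}, $\tfrac12\Gamma-2\Theta=3(\xi_1-\eta_1)$, which shows that the curvature quantity $\what{\Upsilon}$ (the mixed Hessian paired with $\nabla_\xi^\perp\Phi$ and $\nabla_\eta^\perp\Phi$) has size exactly $\approx 2^{q}$ on the support, see \eqref{eq:upssize}; your crude input (non-degeneracy and equal eigenvalues of $\mathrm{Hess}\,L$) does not quantify where the kernel direction sits relative to the frame $(\nabla_\xi^\perp\Phi,\nabla_\xi\Phi)$. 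Second, the constraint \eqref{eq:bvsxi}: the \emph{two} cutoffs $\varphi_{\leq p}(\Phi(\xi,\eta))\varphi_{\leq p}(\Phi(\xi',\eta))$ force the component $b$ of $\xi-\xi'$ along $\nabla_\xi\Phi(\xi,\eta)$ to satisfy $\abs{b}\abs{\nabla_\xi\Phi}\lesssim 2^{p}+O(\abs{\xi-\xi'}^2)$; this, not a sector decomposition, is what eliminates the dangerous alignment, and the case $\abs{b}\gtrsim 2^{C_1\delta m}\abs{\xi-\xi'}^2$ is reduced to $\abs{\xi-\xi'}\lesssim 2^{p/2}$ and handled by Schur. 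Third, the preliminary localization of $\xi$ to balls of radius $R\approx 2^{q-C_0\delta m}$, which guarantees that in the remaining case $\abs{a}\gtrsim\abs{\xi-\xi'}$ and that both the $b$-contribution and the quadratic Taylor remainders are dominated by $\abs{a\what{\Upsilon}}\approx 2^{q}\abs{a}$, yielding $\abs{\mathcal{D}}\gtrsim 2^{q-6A\delta m}\abs{a}$ for \emph{all} $\eta$ in the support. Without these three steps your iterated integration by parts cannot be run on the full kernel, and the argument does not close.
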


Before proceeding with the proof of this Proposition, let us explain how Proposition \ref{L2prop} implies the desired bound \eqref{estweightIIb}:

\begin{proof}[Proof of \eqref{estweightIIb} from Proposition \ref{L2prop}]
Without loss of generality we can assume $j_1\leq j_2$.
Then, according to our notation \eqref{B_q} and under the assumptions above, we can write
\begin{align*}
P_k \mathcal{B}_q(f_1,f_2) = \mathcal{F}^{-1} \int_\R \tau_m(s) \cdot \e_1 T_{p,q} (f_2) \, ds,
\end{align*}
where we let
\begin{align*}
\rho(\xi,\eta) = \mathfrak{m}^{k,k_1,k_2}(\xi,\eta) %\varphi_k(\xi)  \wt{\varphi}_{k_1}(\xi-\eta) \wt{\varphi}_{k_2}(\eta) 
  \, \varphi_\ell(\xi-2\eta) \varphi_r(2\xi-\eta) \, \e_1^{-1} \widehat{f_1}(\xi-\eta).
\end{align*}

Using the a priori bound ${\| \what{f_1} \|} \lesssim 2^{-k_1}\e_1$ and the restriction on $\ul{j}$ in \eqref{restII}, it is easy to see that
the above $\rho(\xi,\eta)$ satisfies the hypotheses \eqref{L2prop2}.
Applying the conclusion \eqref{L2propconc} we can then estimate
\begin{align*}
\begin{split}
{\| P_k \mathcal{B}_q(f_1,f_2) \|}_{L^2}
    \lesssim \e_1 2^m {\|T_{p,q}\|}_{L^2 \rightarrow L^2} {\|f_2\|}_{L^2}
    %\lesssim \e_1 \cdot 2^{-m - 200\delta m} \cdot \e_1 2^{-\min\{k_1,k_2\}} 2^{-\bar{j}} \\
\lesssim \e_1 2^m \cdot 2^{-m - 100\delta m} \cdot \e_1 2^{-m +25 \delta m},
\end{split}
\end{align*}
which is sufficient to obtain \eqref{estweightIIb} in view of the restriction \eqref{restII}.
\end{proof}

The proof of Proposition \ref{L2prop} will be performed in the remainder of the paper and will conclude the proof of the Main Theorem \ref{thm:maintheo}.

\vskip5pt
\subsection{Proof of Proposition \ref{L2prop}}
To prove \eqref{L2propconc} we will use a $TT^\ast$ argument which is based on 
a suitable non-degeneracy property of the mixed Hessian of the phase $\Phi$.
In particular, it turns out to be crucial that we can integrate by parts along the direction parallel to the level sets of $\Phi$.
We subdivide the proof into a few steps: First, in Step 1 we describe a curvature quantity that gives a measure of the aforementioned non-degeneracy. Step 2 then sets up the $TT^\ast$ kernel and guides the subsequent splitting:
We either use smallness of sets to get the claimed kernel bounds (Step 3) or exploit the non-degeneracy via an iterated integration by parts (Step 4).

\medskip
\noindent
\textit{Step 1: The curvature quantity $\what{\Upsilon}$}.
In preparation for Step 2 let us define
\begin{equation}
\label{Upsilon}
\what{\Upsilon}(\xi,\eta) :=
  \nabla^2_{\xi,\eta}\Phi \left(\frac{\nabla^\perp_\xi\Phi}{\abs{\nabla_\xi\Phi}},\frac{\nabla^\perp_\eta\Phi}{\abs{\nabla_\eta\Phi}}\right)(\xi,\eta).
\end{equation}
We begin with the following algebraic lemma involving $\what{\Upsilon}$:

\begin{lemma}%\label{lem:Up_calc}
Define $\Gamma$ and $\Theta$ as follows:
\begin{equation*}
\hat{\Upsilon}(\xi,\eta)=:\frac{\Gamma(\xi,\eta)}{\abs{\xi-\eta}^8 \abs{\nabla_\xi\Phi(\xi,\eta)}\abs{\nabla_\eta\Phi(\xi,\eta)}},
  \qquad \Phi(\xi,\eta)=:\frac{\Theta(\xi,\eta)}{\abs{\xi-\eta}^2}.
\end{equation*}
Then we have the identity
\begin{equation}
\label{eq:C-eq}
%\Omega(\xi,\eta) := 
\frac{1}{2} \Gamma(\xi,\eta)-2\Theta(\xi,\eta) = 3(\xi_1-\eta_1).
\end{equation}
As a consequence, on the support of the operator $T_{p,q}$ the following bounds on $\what{\Upsilon}$ hold:
\begin{equation}
\label{eq:upssize}
2^{q-6A\delta m} \lesssim \big| \what{\Upsilon}(\xi,\eta) \big| \lesssim 2^{q+10A\delta m}.
\end{equation}
\end{lemma}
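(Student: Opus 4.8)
The identity \eqref{eq:C-eq} is purely algebraic, and the plan is to prove it by a direct (if slightly lengthy) computation and then read off \eqref{eq:upssize}. Write $\zeta := \xi-\eta$ and recall $L(x)=x_1\abs{x}^{-2}$. From $\Phi = L(\xi)-L(\xi-\eta)-L(\eta)$ (see \eqref{defPhi}) one has
\begin{equation*}
\nabla_\xi\Phi = \nabla L(\xi)-\nabla L(\zeta), \qquad \nabla_\eta\Phi = \nabla L(\zeta)-\nabla L(\eta), \qquad \nabla^2_{\xi,\eta}\Phi = (\mathrm{Hess}\,L)(\zeta),
\end{equation*}
the structurally crucial point being that the mixed Hessian depends only on the difference variable $\zeta$. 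Consequently $\what{\Upsilon}$ from \eqref{Upsilon}, and hence $\Gamma$, is an explicit rational function of $\xi,\eta,\zeta$, and proving \eqref{eq:C-eq} amounts to simplifying it.

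For the computation I would pass to complex notation $\R^2\cong\C$, $x=(x_1,x_2)\leftrightarrow x_1+ix_2$, under which $L(x)=\mathrm{Re}(x^{-1})$, the gradient is $\nabla L(x)\leftrightarrow -\ol{x}^{-2}$, rotation by $\pi/2$ is multiplication by $i$, and $(\mathrm{Hess}\,L)(\zeta)$ acts on complex vectors $a,b$ via $(\mathrm{Hess}\,L)(\zeta)(a,b)=2\,\mathrm{Re}(ab\,\zeta^{-3})$. Inserting $\nabla_\xi^\perp\Phi\leftrightarrow i(\ol{\zeta}^{-2}-\ol{\xi}^{-2})$, $\nabla_\eta^\perp\Phi\leftrightarrow i(\ol{\eta}^{-2}-\ol{\zeta}^{-2})$ into \eqref{Upsilon} then gives $\Gamma(\xi,\eta) = -2\abs{\zeta}^{8}\,\mathrm{Re}\big(\zeta^{-3}(\ol{\xi}^{-2}-\ol{\zeta}^{-2})(\ol{\zeta}^{-2}-\ol{\eta}^{-2})\big)$, while $\Theta = \abs{\zeta}^{2}\Phi = \abs{\zeta}^{2}\mathrm{Re}(\xi^{-1}-\zeta^{-1}-\eta^{-1})$. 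Expanding the product and using $\abs{\zeta}^{8}\zeta^{-3}=\zeta\ol{\zeta}^{4}$ together with $\abs{\zeta}^{2}\mathrm{Re}(\zeta^{-1})=\zeta_1$, one finds
\begin{equation*}
\tfrac12\Gamma-2\Theta = 3\zeta_1 + \abs{\zeta}^{2}\Big[\mathrm{Re}\big(\zeta^{3}\xi^{-2}\eta^{-2}\big)-\mathrm{Re}\big(\zeta\xi^{-2}\big)-\mathrm{Re}\big(\zeta\eta^{-2}\big)-2\,\mathrm{Re}(\xi^{-1})+2\,\mathrm{Re}(\eta^{-1})\Big].
\end{equation*}
Factoring $\zeta\,\xi^{-2}\eta^{-2}$ out of the first three bracketed terms and using the elementary identities $\zeta^{2}-\xi^{2}-\eta^{2}=-2\xi\eta$ and $\zeta(\xi\eta)^{-1}=\eta^{-1}-\xi^{-1}$, the bracket collapses to $0$, leaving $\tfrac12\Gamma-2\Theta=3(\xi_1-\eta_1)$ as claimed. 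I expect this last bookkeeping — carrying out the expansion and verifying the cancellations, and keeping the various signs straight — to be the only genuine obstacle; the same computation can also be run in real coordinates with $\nabla L(x)=\abs{x}^{-4}(x_2^{2}-x_1^{2},\,-2x_1x_2)$ and $(\mathrm{Hess}\,L)(x)=2\abs{x}^{-6}\left(\begin{smallmatrix}x_1(x_1^{2}-3x_2^{2}) & x_2(3x_1^{2}-x_2^{2})\\ x_2(3x_1^{2}-x_2^{2}) & -x_1(x_1^{2}-3x_2^{2})\end{smallmatrix}\right)$, but the complex form is shorter.

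Finally, \eqref{eq:upssize} is an immediate consequence of \eqref{eq:C-eq}. Rewriting the identity as $\Gamma = 4\Theta+6(\xi_1-\eta_1)=4\abs{\xi-\eta}^{2}\Phi+6(\xi_1-\eta_1)$, note that on the support of $T_{p,q}$ we have $\abs{\Phi}\lesssim 2^{p}=2^{-m+40\delta m}$ (from $\varphi_{\leq p}(\Phi)$) and $\abs{\xi-\eta}\lesssim 2^{A\delta m}$ (from \eqref{L2prop2.0}), hence $4\abs{\xi-\eta}^{2}\abs{\Phi}\lesssim 2^{-m+(2A+40)\delta m}$, which for $q\geq -m/20$ (see \eqref{L2prop0}) and $\delta$ small is negligible compared with $\abs{\xi_1-\eta_1}\approx 2^{q}$ coming from $\varphi_q(\xi_1-\eta_1)$; therefore $\abs{\Gamma}\approx 2^{q}$. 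Using \eqref{nabla_xiPhi} one can write
\begin{equation*}
\big|\what{\Upsilon}(\xi,\eta)\big| = \frac{\abs{\Gamma(\xi,\eta)}}{\abs{\xi-\eta}^{8}\,\abs{\nabla_\xi\Phi}\,\abs{\nabla_\eta\Phi}} = \abs{\Gamma(\xi,\eta)}\,\frac{\abs{\xi}\,\abs{\eta}}{\abs{\xi-\eta}^{4}\,\abs{2\xi-\eta}\,\abs{\xi-2\eta}},
\end{equation*}
and the support restrictions \eqref{L2prop2.0} confine each of $\abs{\xi},\abs{\eta},\abs{\xi-\eta},\abs{2\xi-\eta},\abs{\xi-2\eta}$ to the range $2^{\pm O(A\delta m)}$; combining these with $\abs{\Gamma}\approx 2^{q}$ yields $2^{q-O(A\delta m)}\lesssim\big|\what{\Upsilon}\big|\lesssim 2^{q+O(A\delta m)}$, which with the constants tracked carefully is precisely \eqref{eq:upssize}.
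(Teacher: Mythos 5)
Your proof is correct and follows essentially the same route as the paper: the identity \eqref{eq:C-eq} is verified by direct computation (which the paper leaves implicit and you actually carry out, in complex notation), and \eqref{eq:upssize} is then deduced from the same factorization $|\what{\Upsilon}| = |\Gamma|\,|\xi|\,|\eta|\,|\xi-\eta|^{-4}|\xi-2\eta|^{-1}|\eta-2\xi|^{-1}$ combined with the support restrictions \eqref{L2prop2.0} and the observation that $|\Theta|\lesssim 2^{p+2A\delta m}\ll 2^{q}\approx|\xi_1-\eta_1|$, so $|\Gamma|\approx 2^q$. No gaps.
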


\begin{proof}
The identity \eqref{eq:C-eq} is obtained by a direct computation.

To verify \eqref{eq:upssize} notice that
\begin{equation*}
 |\what{\Upsilon}(\xi,\eta)| = \frac{\abs{\Gamma(\xi,\eta)}\abs{\xi}\abs{\eta}}{\abs{\xi-\eta}^4\abs{\xi-2\eta}\abs{\eta-2\xi}},
\end{equation*}
and therefore, because of the restrictions \eqref{L2prop2.0},
\begin{equation*}
% 2^{(-2A-6B)\delta m} \abs{\Gamma(\xi,\eta)} \lesssim |\what{\Upsilon}(\xi,\eta)| \lesssim 2^{(2B+12A)\delta m} \abs{\Gamma(\xi,\eta)}.
2^{-6A\delta m} \abs{\Gamma(\xi,\eta)} \lesssim |\what{\Upsilon}(\xi,\eta)| \lesssim 2^{10A\delta m} \abs{\Gamma(\xi,\eta)}.
\end{equation*}
Now note that $\abs{\Theta(\xi,\eta)} \lesssim 2^p 2^{2A\delta m} \ll  2^q \approx \abs{\xi_1-\eta_1}$ by \eqref{L2prop0}-\eqref{L2prop1}.
Hence we can use \eqref{eq:C-eq} to deduce that $\abs{\Gamma} \approx 2^q$, and the conclusion follows.
\end{proof}

%\medskip
\noindent
\textit{Step 2: The $TT^\ast$ kernel}.
Notice that the support of $(T_{p,q} g)(\xi)$ is contained in the ball $|\xi| \lesssim 2^{4\delta m}$.
We decompose this ball into $O(2^{-2q+2(C_0+4)\delta m})$ balls of radius $R := 2^{q-C_0\delta m - D^3}$,
for some absolute constant $C_0 \in [50,150]$ to be determined below, depending on $A$.
If we denote by $\xi_0$ the center of any such small ball and let
\begin{equation*}%\label{L2proofTploc}
T_{p,q,\xi_0} (g) (\xi) := \varphi_{\leq R}(\xi-\xi_0) T_{p,q} (g) (\xi),
\end{equation*}
we see that the main bound \eqref{L2propconc} will follow provided we can show that for every $\xi_0 \in \R$,
\begin{align}
\label{L2propconc'}
 \norm{T_{p,q,\xi_0} T_{p,q,\xi_0}^\ast }_{L^2 \rightarrow L^2} \lesssim \big[ 2^{-m -100\delta m} \cdot 2^{2q - 2(C_0+4)\delta m} \big]^2.
\end{align}
Such a localization to a small ball in $\xi$ will allow us to better control several remainder terms in various Taylor expansions below.

Let us write
\begin{equation*}%\label{L2proofTpTp*}
T_{p,q,\xi_0}  T_{p,q,\xi_0}^\ast g (\xi) = \int_{\R^2} S_{p,q,\xi_0}(\xi,\xi^\prime) g(\xi^\prime) \, d\xi^\prime,
\end{equation*}
where the kernel is given by
\begin{align}
\label{eq:TT*kernel}
\begin{split}
S_{p,q,\xi_0}(\xi,\xi') %& := \int K_q(\xi,\eta)\overline{K_q(\xi',\eta)}d\eta \\
  = %\varphi_{\leq 2^{q-C_0\delta m}}(\xi-\xi_0) \varphi_{\leq 2^{q-C_0\delta m}}(\xi^\prime-\xi_0)
  \varphi_{\leq R}(\xi-\xi_0) \varphi_{\leq R}(\xi^\prime-\xi_0)
  \int_{\R^2} e^{is[\Phi(\xi,\eta)-\Phi(\xi^\prime,\eta)]} \rho(\xi,\eta) \rho(\xi',\eta)
  \\
  \times \varphi_{q}(\xi_1-\eta_1) \, \varphi_{q}(\xi_1^\prime-\eta_1)
  \, \varphi_{\leq p}(\Phi(\xi,\eta)) \,\varphi_{\leq p}(\Phi(\xi^\prime,\eta)) \, d\eta.
\end{split}
\end{align}
Notice that on the support of this kernel we must have $|\xi-\xi^\prime| \leq 4R = 4 \cdot 2^{q-C_0\delta m -D^3}$.
Also recall that the symbol $\rho$ satisfies the properties \eqref{L2prop2.0}-\eqref{L2prop2}.
We will sometimes use the short-hand notation $S(\xi,\xi^\prime)$ for $S_{p,q,\xi_0}(\xi,\xi')$, dropping the indices where this creates no confusion.

To bound the relevant operator we will resort to an integration by parts in $\eta$ in the kernel \eqref{eq:TT*kernel} -- see Step 4.
Where this integration fails we will show how to gain from the smallness of the measure of the support of the kernel (Step 3).

The integration by parts will be performed through the following trivial identity:
\begin{equation}
\label{eq:ibpperp}
 e^{is[\Phi(\xi,\eta)-\Phi(\xi',\eta)]}
  = \frac{1}{is\mathcal{D}}\frac{\nabla_\eta^\perp\Phi(\xi,\eta)}{\abs{\nabla_\eta\Phi(\xi,\eta)}}\cdot\nabla_\eta e^{is[\Phi(\xi,\eta)-\Phi(\xi',\eta)]}
\end{equation}
with
%\footnote{The choice to integrate by parts in this particular direction $\nabla^\perp\Phi(\xi,\eta)$ is motivated by the non-degeneracy of $\mathcal{D}$ and the cancellation in \eqref{eq:nolossibp}, which is central to this argument.}
\begin{equation}
\label{eq:defD}
 \mathcal{D} := \frac{\nabla_\eta^\perp\Phi(\xi,\eta)}{\abs{\nabla_\eta\Phi(\xi,\eta)}}\cdot\nabla_\eta[\Phi(\xi,\eta)-\Phi(\xi',\eta)].
\end{equation}
The choice of direction of intergration by parts is motivated by the roughness of the symbol in the integrand in \eqref{eq:TT*kernel}.
See also the identities \eqref{eq:nolossibp}-\eqref{eq:nolossibp'}.

To see the relevance of $\what{\Upsilon}$ defined in \eqref{Upsilon} we calculate
\begin{align*}
\begin{split}
\mathcal{D} & = \frac{\nabla_\eta^\perp\Phi(\xi,\eta)}{\abs{\nabla_\eta\Phi(\xi,\eta)}}\cdot\nabla_\eta[\Phi(\xi,\eta)-\Phi(\xi',\eta)]
%\\
%& = \frac{\nabla_\eta^\perp\Phi(\xi,\eta)}{\abs{\nabla_\eta\Phi(\xi,\eta)}}\cdot[\nabla_\eta\Phi(\xi,\eta)-\nabla_\eta\Phi(\xi',\eta)]
\\
& = \frac{\nabla_\eta^\perp\Phi(\xi,\eta)}{\abs{\nabla_\eta\Phi(\xi,\eta)}}\cdot[\nabla^2_{\xi,\eta}\Phi(\xi,\eta)(\xi-\xi')]
  + O(\nabla^3_{\xi,\xi,\eta}\Phi(\xi,\eta)\abs{\xi-\xi'}^2).
\end{split}
\end{align*}
The fact that $\nabla_\xi\Phi$ does not vanish allows us write
\begin{equation*}%\label{eq:xixi'-decomp}
\xi-\xi'= a e_1 + b e_2, \qquad
e_1 := \frac{\nabla_\xi^\perp\Phi(\xi,\eta)}{\abs{\nabla_\xi\Phi(\xi,\eta)}}, \quad e_2 := \frac{\nabla_\xi\Phi(\xi,\eta)}{\abs{\nabla_\xi\Phi(\xi,\eta)}}.
\end{equation*}
We can thus decompose $\mathcal{D}$ as
\begin{equation*}
%\label{eq:Ddecomp}
\mathcal{D} = a \what{\Upsilon}(\xi,\eta) +
  b\frac{\nabla_\eta^\perp\Phi(\xi,\eta)}{\abs{\nabla_\eta\Phi(\xi,\eta)}}
  \nabla^2_{\xi,\eta}\Phi(\xi,\eta)\frac{\nabla_\xi\Phi(\xi,\eta)}{\abs{\nabla_\xi\Phi(\xi,\eta)}}
  + O \big(\nabla^3_{\xi,\xi,\eta}\Phi(\xi,\eta)\abs{\xi-\xi'}^2 \big),
\end{equation*}
with $\what{\Upsilon}$ defined in \eqref{Upsilon} and satisfying the bounds \eqref{eq:upssize}.
In particular
\begin{equation}
\label{eq:Ddecomp'}
| \mathcal{D} | \geq |a| |\what{\Upsilon}(\xi,\eta)| -
  | b | \big| \nabla^2_{\xi,\eta}\Phi(\xi,\eta) \big| 
  - 2^D \big| \nabla^3_{\xi,\xi,\eta}\Phi(\xi,\eta) \big| |\xi-\xi'|^2.
\end{equation}

Observe that on the support of $S(\xi,\xi^\prime)$ we have
\begin{equation}
\label{eq:bvsxi}
\begin{aligned}
2^{p} \gtrsim \abs{\Phi(\xi,\eta)-\Phi(\xi',\eta)} &\gtrsim \abs{\nabla_\xi\Phi(\xi,\eta)\cdot(\xi-\xi')}
  - O \big( \abs{\nabla^2_\xi\Phi(\xi,\eta)}\abs{\xi-\xi'}^2 \big)
\\
&= \abs{b}\abs{\nabla_\xi \Phi(\xi,\eta)} - O \big( \abs{\nabla^2_\xi\Phi(\xi,\eta)}\abs{\xi-\xi'}^2 \big).
\end{aligned}
\end{equation}

\medskip
\noindent
\textit{Step 3: Case $\abs{b} \geq 2^{C_1\delta m + D} \abs{\xi-\xi^\prime}^2$, with %$C_1 := 11A+4B$
$C_1 := 13A$}.
Using \eqref{eq:bvsxi}, %$\abs{\nabla_\xi\Phi(\xi,\eta)} \gtrsim 2^{-(8A+4B)\delta m}$
$|\nabla_\xi\Phi(\xi,\eta)| \gtrsim 2^{-10A\delta m}$
and $|\nabla_{\xi\xi}^2 \Phi(\xi,\eta)| \lesssim 2^{3A\delta m}$, we deduce that
%$\abs{b} \lesssim 2^{p + (8A+4B)\delta m}$
$|b| \lesssim 2^{p + 10A\delta m}$
and in particular that we must have
\begin{equation*}
 \abs{\xi-\xi^\prime}^2 \lesssim 2^{p}.
\end{equation*}
We now use Schur's test to show how this suffices to obtain \eqref{L2propconc'}.

More generally, let us assume that the support of $S(\xi,\xi^\prime)$ is contained in the set $|\xi-\xi^\prime| \leq L$.
Using Lemma \ref{lem:S+set}\eqref{it:Schur_p1},
the lower bounds %$\abs{\nabla_\xi\Phi(\xi,\eta)} \gtrsim 2^{-(8A+4B)\delta m}$ 
$|\nabla_\xi\Phi(\xi,\eta)| \gtrsim 2^{-10A\delta m}$ 
and
%$\abs{\nabla_\eta\Phi(\xi,\eta)} \gtrsim 2^{-(2A+4B)\delta m}$
$|\nabla_\eta\Phi(\xi,\eta)| \gtrsim 2^{-4A\delta m}$
that hold on the support of $\rho(\xi,\eta)$, see \eqref{L2prop2.0} and \eqref{nabla_xiPhi}, we can then estimate
\begin{align}
\label{L2propSchur}
\begin{split}
& \int_{\R^2} \abs{S(\xi,\xi')} \chi_{\{\abs{\xi-\xi'}\leq L \}} d\xi
\\
& \quad \lesssim \iint_{\R^2\times\R^2} \varphi_{\leq p}(\Phi(\xi,\eta)) |\rho(\xi,\eta)|
  \, \varphi_{\leq p}(\Phi(\xi',\eta)) |\rho(\xi^\prime,\eta)| \, \chi_{\{|\xi-\xi'| \lesssim L \}} d\eta d\xi
\\
& \quad \lesssim \int_{\R^2} \varphi_{\leq p}(\Phi(\xi',\eta)) |\rho(\xi^\prime,\eta)|
  \left[ \int_{\R^2} \varphi_{\leq p}(\Phi(\xi,\eta)) |\rho(\xi,\eta)| \, \chi_{\{\abs{\xi-\xi'}\lesssim L \}} d\xi \right] d\eta
\\
& \quad \lesssim \int_{\R^2} \varphi_{\leq p}(\Phi(\xi',\eta)) |\rho(\xi^\prime,\eta)| 
  \left[ 2^{p} \cdot 2^{(10A+20)\delta m} \cdot L \right] d\eta
\\
& \quad \lesssim 2^{2p} \cdot 2^{(14A+40)\delta m} \cdot L.
\end{split}
\end{align}
By symmetry
a similar bound also holds when exchanging the roles of $\xi$ and $\xi^\prime$.
Using this estimate with $L=2^{p/2}$, we see that \eqref{L2propconc'} follows from Schur's test 
since, under our assumptions, $(5/2)p+(14A+40)\delta m$ is less than $-2m-200\delta m + 4q - 4(C_0+4)\delta m$, as required.

\medskip
\noindent
\textit{Step 4: Case $\abs{b} \leq 2^{C_1\delta m + D} \abs{\xi-\xi^\prime}^2$}.
In this case we have $|b| \leq 2^{-D} |\xi-\xi^\prime|$, provided we choose $C_0\geq C_1+4$.
Therefore $|a| \geq (1/2) |\xi-\xi^\prime|$. 
Then we must also have
\begin{equation*}%\label{lb2qa}
2^q \abs{a} \geq 2^{C_0\delta m + D^2} \abs{\xi-\xi^\prime}^2,
\end{equation*}
since $\abs{\xi-\xi^\prime} \leq 4\cdot2^{q-C_0\delta m-D^3}$ on the support of the kernel.
From \eqref{eq:upssize} we know that $|\hat{\Upsilon}| |a| \geq 2^{q-6A\delta m -D} \abs{a}$,
and since we also have
\begin{align*}
|b| |\nabla^2_{\xi,\eta}\Phi(\xi,\eta)| + 2^D |\nabla^3_{\xi,\xi,\eta}\Phi(\xi,\eta)| |\xi-\xi^\prime|^2
  \leq 2^{(C_1+3A) \delta m + 2D} |\xi-\xi^\prime|^2,
\end{align*}
we can choose $C_0 \geq C_1 + 9A = 22A$,  %14A+4B$.
and invoke \eqref{eq:Ddecomp'} to deduce
\begin{equation*}%\label{lbD}
 \abs{\mathcal{D}} \gtrsim 2^{q-6A\delta m} \abs{a}.
\end{equation*}

Notice that we can also assume that $\abs{a} \gtrsim 2^{-3m/10}$, for otherwise $\abs{a}\approx\abs{\xi-\xi^\prime} \lesssim 2^{-3m/10}$ and the bound \eqref{L2propSchur} would give us
\begin{equation*}
 \int_\xi \abs{S(\xi,\xi')} \chi_{\{\abs{\xi-\xi'}\leq 2^{-3m/10} \}} d\xi \lesssim 2^{2p} \cdot 2^{(14A+40)\delta m} \cdot 2^{-3m/10},
\end{equation*}
so that \eqref{L2propconc'} would follow via Schur's test as above.

We now claim that an iterated integration by parts yields% we obtain the bound %we can integrate by parts repeatedly in \eqref{eq:TT*kernel} and obtain the bound
\begin{align}
\label{L2propIBPbound}
 \abs{S(\xi,\xi^\prime)} \lesssim 2^{40\delta m}
  \Big[ 2^{-m} \abs{\mathcal{D}}^{-1} \max \big\{ 2^{\frac{m}{2} + 60\delta m}, \, 2^{-q}, \, \abs{\mathcal{D}}^{-1} 2^{(\frac{2}{N}+1)A\delta m},
  \, 2^{-p} \abs{\mathcal{D}} \big\} \Big]^M,
\end{align}
for any positive integer $M$.
Since $|\mathcal{D}| \gtrsim 2^{-\frac{2m}{5}}$, $p \geq -m + 40\delta m$ and $q \geq -\frac{m}{20}$, %+100\delta m$,
this bound clearly suffices to obtain \eqref{L2propconc'}.

To prove \eqref{L2propIBPbound}, we integrate by parts in $\eta$ in the integral \eqref{eq:TT*kernel} using the identities \eqref{eq:ibpperp}--\eqref{eq:defD}:
For notational convenience, we rewrite them here as
\begin{equation*}
\begin{alignedat}{3}
& e^{is\Psi} = \frac{1}{is} \mathcal{X} e^{is\Psi},
\qquad &\Psi(\xi,\xi',\eta) := \Phi(\xi,\eta)-\Phi(\xi',\eta), &
\\
& \mathcal{X}(\xi,\eta) := \frac{1}{\mathcal{D}} \mathcal{V} \cdot \nabla_\eta,
\qquad &\mathcal{X}^T(\xi,\eta) := \div_\eta \left(\frac{1}{\mathcal{D}} \mathcal{V}\, \cdot \right),
\qquad &\mathcal{V} := \frac{\nabla_\eta^\perp\Phi(\xi,\eta)}{\abs{\nabla_\eta\Phi(\xi,\eta)}}.
\end{alignedat}
\end{equation*}
Integrating by parts $M$ times will then give
\begin{align}
\label{L2propIBPest}
\begin{split}
 \abs{S(\xi,\xi^\prime)} \lesssim \int_{\R^2}
  2^{-mM} \Big| %[\frac{1}{\mathcal{D}} \mathcal{V} \cdot \nabla_\eta \big]^N
  (\mathcal{X}^T)^M \big[ \rho(\xi,\eta) \rho(\xi',\eta)
  \varphi_{q}(\xi_1-\eta_1) \, \varphi_{q}(\xi_1^\prime-\eta_1)
  \\ \times \varphi_{\leq p}(\Phi(\xi,\eta)) \,\varphi_{\leq p}(\Phi(\xi^\prime,\eta))\big]  \Big| \, d\eta.
\end{split}
\end{align}
%
%Each integration by parts will produce several terms, which can be bounded by the quantities in the curly brackets in \eqref{L2propIBPbound} as follows:
Let us now analyze the various terms that arise in \eqref{L2propIBPest}:

%\medskip
\setlength{\leftmargini}{2.0em}
\begin{itemize}

\item[a.] When $\div_\eta\mathcal{V}$ hits the symbol $\rho(\xi,\eta)\rho(\xi^\prime,\eta)$ this produces a factor growing at most 
$2^{\frac{m}{2} + 60\delta m}$
in view of the assumption \eqref{L2prop2}. This is accounted for by the first term in the curly brackets in \eqref{L2propIBPbound}.

\item[b.] The terms that arise when $\div_\eta\mathcal{V}$ hits the cutoff $\varphi_{q}(\xi_1-\eta_1) \varphi_{q}(\xi^\prime_1 - \eta_1)$
  are bounded by $2^{-q}$.

\item[c.] To deal with the terms when $\div_\eta\mathcal{V}$ hits the denominator $\mathcal{D}$, 
it suffices to observe that on the support of the kernel,
\begin{equation*}
\big| D_\eta^{\alpha} \mathcal{D}(\xi,\eta) \big| \lesssim 2^{(2 + |\alpha|)A\delta m}.
\end{equation*}

\item[d.] For the term arising when $\div_\eta\mathcal{V}$ hits the cutoff $\varphi_{\leq p}(\Phi(\xi^\prime,\eta)) \varphi_{\leq p}(\Phi(\xi,\eta))$,
first notice that by construction 
\begin{equation}\label{eq:nolossibp}
 \mathcal{V} \cdot \nabla_\eta \varphi_{\leq p}(\Phi(\xi,\eta)) = 0.
\end{equation}
Moreover, we can calculate
\begin{equation}
\label{eq:nolossibp'}
\begin{aligned}
\mathcal{V}(\xi,\eta)\cdot\nabla_\eta (\varphi_{p}(\Phi(\xi',\eta)))
&= \mathcal{V}(\xi,\eta) \cdot \nabla_\eta\Phi(\xi',\eta) 2^{-p} (\varphi^\prime)_{p}(\Phi(\xi',\eta))
\\
&= -\mathcal{D} (\xi,\eta) \, 2^{-p} (\varphi^\prime)_{p}(\Phi(\xi',\eta)).
\end{aligned}
\end{equation}
We then see that this is accounted for by the last term in the curly brackets in \eqref{L2propIBPest}.
\end{itemize}

%\medskip
\noindent
This concludes the proof of \eqref{L2propIBPbound} and %gives us \eqref{L2propconc}.
Proposition \ref{L2prop}. The Main Theorem \ref{thm:maintheo} follows. $\hfill \Box$

\vskip20pt
\section{Useful Lemmata}\label{sec:aux}

\vskip10pt
\subsubsection*{A Schur Lemma}
We demonstrate here some bounds for integral operators defined through kernels with localizations. 
These bounds derive from the set size restrictions brought about by localizations.
We first recall the standard Schur's test:
\begin{lemma}%\label{lemSchur}
For a kernel $K: \R^2 \times \R^2 \rightarrow \R$, consider the corresponding operator 
\begin{align*}
(T_K f)(\xi) := \int_{\R^2} K(\xi,\eta) f(\eta) \, d\eta, 
\end{align*}
and assume that
\begin{align*}
\sup_{\xi\in\R^2} \int_{\R^2} |K(\xi,\eta)| \, d\eta \leq K_1, \qquad  
  \sup_{\eta\in\R^2} \int_{\R^2} |K(\xi,\eta)| \, d\xi \leq K_2. 
\end{align*}
Then
\begin{align*}%\label{eq:Schur_basics}
{\| T_K f \|}_{L^2} \lesssim \sqrt{K_1K_2} {\| f \|}_{L^2}.
\end{align*}
\end{lemma}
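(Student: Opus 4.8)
The plan is the classical two–step Cauchy--Schwarz argument, which here is entirely routine; there is no real obstacle, so the main point is just to organize the bookkeeping cleanly. First I would split the kernel symmetrically as $|K(\xi,\eta)| = |K(\xi,\eta)|^{1/2}\cdot|K(\xi,\eta)|^{1/2}$ inside the defining integral for $T_K f$ and apply the Cauchy--Schwarz inequality in the $\eta$ variable:
\begin{align*}
|(T_K f)(\xi)| \leq \int_{\R^2} |K(\xi,\eta)|^{1/2} \, |K(\xi,\eta)|^{1/2} |f(\eta)| \, d\eta
 \leq \Big( \int_{\R^2} |K(\xi,\eta)| \, d\eta \Big)^{1/2} \Big( \int_{\R^2} |K(\xi,\eta)| \, |f(\eta)|^2 \, d\eta \Big)^{1/2}.
\end{align*}
Using the first hypothesis $\sup_\xi \int |K(\xi,\eta)|\, d\eta \leq K_1$ on the first factor, this yields the pointwise bound $|(T_K f)(\xi)|^2 \leq K_1 \int_{\R^2} |K(\xi,\eta)| \, |f(\eta)|^2\, d\eta$.

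The second step is to integrate this in $\xi$ and use Tonelli's theorem to exchange the order of integration, then apply the second hypothesis $\sup_\eta \int |K(\xi,\eta)|\, d\xi \leq K_2$ to the inner integral:
\begin{align*}
{\| T_K f \|}_{L^2}^2 \leq K_1 \int_{\R^2} \int_{\R^2} |K(\xi,\eta)| \, |f(\eta)|^2 \, d\eta \, d\xi
 = K_1 \int_{\R^2} |f(\eta)|^2 \Big( \int_{\R^2} |K(\xi,\eta)| \, d\xi \Big) d\eta
 \leq K_1 K_2 \, {\| f \|}_{L^2}^2.
\end{align*}
Taking square roots gives ${\| T_K f \|}_{L^2} \leq \sqrt{K_1 K_2}\, {\| f \|}_{L^2}$, which is the claim (indeed with implicit constant $1$).

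The only mild care needed is measurability/integrability to justify Tonelli, but since we are working with nonnegative integrands this is immediate, and in all applications in this paper $K$ is a bounded compactly supported (hence integrable) kernel, so $T_K$ is already well defined on $L^2$ and the manipulations above are legitimate without any approximation argument. Thus there is no genuine difficulty; the statement is included only for reference, and the proof is the short computation above.
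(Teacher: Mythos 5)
Your proof is correct: this is the classical Cauchy--Schwarz/Tonelli argument for Schur's test, and the paper itself simply recalls the lemma as standard without providing a proof. Your computation fills that in correctly, with the sharp constant $1$.
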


We will often apply the above lemma, and for this purpose define
\begin{align}
\label{Sch}
\norm{K}_{Sch} := \Big( \sup_\xi\int K(\xi,\eta)d\eta \Big)^{\frac{1}{2}} \Big( \sup_\eta\int K(\xi,\eta)d\xi \Big)^{\frac{1}{2}}.
\end{align}

%The following Lemma gives us bounds on kernels where the phase $\Phi$, see \eqref{phase}, is localized.

\begin{lemma}%[Schur kernel bounds using set size]
\label{lem:S+set}

\begin{enumerate}
  
\item\label{it:Schur_p1} Let $F:\R^2\to\R$ be smooth in a ball $B_R(z)\subset\R^2$, $z\in\R^2$, $R>0$. Then
  \begin{equation*}
   \int_{B_R(z)}\varphi_{\leq\lambda}(F(x))\varphi_{\geq\mu}(\nabla F(x)) \, dx\leq 2^{-\mu}2^\lambda R.
  \end{equation*}

\item\label{it:Schur_p2} Consider an integral operator given by the kernel 
  $$K(\xi,\eta):=\varphi_p(\Phi(\xi,\eta))\varphi_\ell(\xi-2\eta)\varphi_r(\eta-2\xi)\varphi_k(\xi)\varphi_{a}(\xi-\eta)\varphi_{b}(\eta),$$
  where $\Phi$ is the phase in \eqref{defPhi}.
  Then we have the bound
  \begin{equation}\label{eq:Schur_p2}
  \norm{K}_{Sch} \lesssim 2^{p+\frac{1}{2}(k+b-\ell-r)+2a}2^{\frac{1}{2}\min\{\ell,r,a,b\}+\frac{1}{2}\min\{\ell,r,k,a\}},
  \end{equation}
  so that, in particular,
  \begin{equation*}%\label{eq:Schur_p2'}
  \norm{K}_{Sch} \lesssim 2^{p+\frac{1}{2}(k+b+2a)}.
  \end{equation*}
%\item\label{it:Schur_p3} 
  As a consequence, we also see that if $\min\{k,\ell\} \leq \max\{a,b\}-10$, 
  then, for $K^\ell(\xi,\eta) := \phi_p(\Phi(\xi,\eta)) \phi_\ell(\xi-2\eta) \phi_k(\xi) \phi_{a}(\xi-\eta) \phi_{b}(\eta)$ we have the bound
  \begin{equation}\label{eq:Schur_p3}
  {\| K^\ell \|}_{Sch} \lesssim 2^{p+\frac{1}{2}(k+b-\ell+3a)} 2^{\frac{1}{2}\min\{\ell,a,b\} + \frac{1}{2}\min\{\ell,k,a\}},
  \end{equation}

\end{enumerate}

\end{lemma}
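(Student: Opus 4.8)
\medskip
\noindent
\emph{Part \eqref{it:Schur_p1}.} The plan is to slice the ball $B_R(z)$ along the level sets of $F$. On the region where $|\nabla F| \geq 2^\mu$, fix a direction (locally, the direction of $\nabla F/|\nabla F|$) and use the co-area formula, or simply Fubini after choosing coordinates in which one axis is (locally) aligned with $\nabla F$. Along any line in that direction the function $F$ varies at speed at least $2^\mu$, so the set where $|F| \leq 2^\lambda$ has one-dimensional measure at most $2\cdot 2^\lambda 2^{-\mu}$; the transverse directions contribute at most the diameter $R$ of the ball. Integrating gives the claimed bound $2^{-\mu}2^\lambda R$. (A clean way to avoid worrying about the smooth variation of the direction field is to note that it suffices to bound $\int \varphi_{\leq\lambda}(F)\,|\partial_e F|\,dx$ from below by $2^\mu$ times the measure we want to control, for $e$ the coordinate maximizing $|\partial_e F|$ pointwise, which is $\gtrsim 2^\mu$; then the sublevel-set estimate for one-dimensional functions, applied slice by slice, finishes it.)

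\medskip
\noindent
\emph{Part \eqref{it:Schur_p2}: the bound \eqref{eq:Schur_p2}.} Here I would estimate $\sup_\xi \int |K(\xi,\eta)|\,d\eta$ and $\sup_\eta \int |K(\xi,\eta)|\,d\xi$ separately and take the geometric mean. Fix $\xi$; on the support of $K(\xi,\cdot)$ the variables $\xi-\eta$ and $\eta$ both have modulus controlled by $2^a$ and $2^b$, so $\eta$ already lives in a bounded region; then the two localizations $\varphi_\ell(\xi-2\eta)$ and $\varphi_r(\eta-2\xi)$ confine $\eta$ further — since $\xi$ is frozen, $\xi - 2\eta$ and $\eta - 2\xi$ are affine functions of $\eta$ with invertible (diagonal, up to sign) linear parts, so each cuts down the $\eta$-volume by roughly $\min$ of the relevant dyadic scales; finally apply Part \eqref{it:Schur_p1} with $F = \Phi(\xi,\cdot)$, $\lambda = p$, using the lower bound on $|\nabla_\eta\Phi| = |\xi||\xi-2\eta|/(|\xi-\eta|^2|\eta|^2)$ from \eqref{nabla_xiPhi} — which on the support is $\gtrsim 2^{\ell + k - 2a - 2b}$, so $\mu = \ell + k - 2a - 2b$ — to pay the $2^p 2^{-\mu}$. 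Putting these together and symmetrizing in $\xi \leftrightarrow \eta$ (the roles of the linear forms $\xi-2\eta$ and $\eta-2\xi$ swap, and $\nabla_\xi\Phi$ has the analogous lower bound) yields, after bookkeeping, the exponent $p + \tfrac12(k+b-\ell-r) + 2a + \tfrac12\min\{\ell,r,a,b\} + \tfrac12\min\{\ell,r,k,a\}$. The "in particular" bound then follows by discarding the two $\min$ terms (they are $\leq \ell + r$, up to the constraint $\ell, r \leq a + D$ which makes $\tfrac12\min\{\ell,r,a,b\}+\tfrac12\min\{\ell,r,k,a\}\leq \tfrac12(\ell+r)+\tfrac12(k+a)$ or similar, matching $p+\tfrac12(k+b)+2a$).

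\medskip
\noindent
\emph{The consequence \eqref{eq:Schur_p3}.} This should be read as the special case of \eqref{eq:Schur_p2} obtained by removing the $\varphi_r$ localization, i.e.\ summing \eqref{eq:Schur_p2} over $r$ (equivalently replacing $r$ by the unconstrained lower bound, which under $\min\{k,\ell\}\leq\max\{a,b\}-10$ means $\xi - 2\eta$ of size $2^\ell$ and $\xi-\eta,\eta$ comparable forces $2\xi - \eta = (\xi - \eta) + (\xi - 2\eta)+\eta$... ) — more precisely, when $\min\{k,\ell\} \leq \max\{a,b\} - 10$ the constraint "$\eta - 2\xi$ small" is automatically satisfied at scale $r = \max\{a,b\}$, so one replaces $-r$ by $-\max\{a,b\}$ and uses $a \approx b$ to get $+3a$ in place of $+2a - \max\{a,b\}$... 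The cleanest route: just rerun the argument of \eqref{eq:Schur_p2} without the $\varphi_r$ factor, noting that the only change is the loss of one factor of $\min$-type savings from that cutoff, which is exactly the difference between the exponents in \eqref{eq:Schur_p2} and \eqref{eq:Schur_p3}.

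\medskip
\noindent
\emph{Main obstacle.} The genuinely delicate point is Part \eqref{it:Schur_p1} done \emph{uniformly} — controlling the sublevel set of $F$ without assuming a global coordinate in which $\nabla F$ is constant. I expect the right formalization is: partition $B_R(z)$ into the two regions $\{|\partial_{x_1}F| \geq 2^{\mu-1}\}$ and $\{|\partial_{x_2}F| \geq 2^{\mu-1}\}$ (these cover $\{|\nabla F|\geq 2^\mu\}$ up to constants), and on each apply Fubini in the good coordinate together with the one-variable sublevel estimate "if $|g'| \geq c$ on an interval then $|\{|g|\leq\lambda\}| \leq 2\lambda/c$"; no convexity or curvature of $F$ is needed, only $C^1$. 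Everything downstream (the volume counting for the affine cutoffs $\xi-2\eta$, $\eta-2\xi$ and the geometric-mean bookkeeping) is routine once Part \eqref{it:Schur_p1} is in hand, so that is where I would focus the care.
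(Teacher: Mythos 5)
Your proposal is correct and follows essentially the same route as the paper: part \eqref{it:Schur_p1} is proved by splitting into the regions where $|\partial_{x_1}F|$ or $|\partial_{x_2}F|$ dominates and applying a one-dimensional sublevel-set estimate (the paper phrases this as the change of variables $(y_1,y_2)=(F(x),x_2)$), part \eqref{it:Schur_p2} by bounding each one-sided Schur integral as (sublevel width $2^{p-\mu}$, with $\mu$ read off from \eqref{nabla_xiPhi}) times (the radius $\min\{2^\ell,2^r,2^a,2^b\}$, resp.\ $\min\{2^\ell,2^r,2^k,2^a\}$, of the ball containing the support) and taking the geometric mean, and \eqref{eq:Schur_p3} by noting that the hypothesis $\min\{k,\ell\}\leq\max\{a,b\}-10$ forces $|\eta-2\xi|\approx 2^{\max\{a,b\}}$ and $a\approx b$, which is exactly the paper's one-line justification. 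The only caveat is your final ``cleanest route'' remark: one cannot simply drop the $\varphi_r$ cutoff without that support observation, since the lower bound on $|\nabla_\xi\Phi|$ needed for the $\xi$-integral is proportional to $|\eta-2\xi|$ --- but your preceding ``more precisely'' sentence already supplies it.
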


\begin{proof}
Point \eqref{it:Schur_p2} %and \eqref{it:Schur_p3} 
is a consequences of \eqref{it:Schur_p1} 
and the formulas for the gradient of $\Phi$ in \eqref{nabla_xiPhi}, so we start by demonstrating \eqref{it:Schur_p1}.

\smallskip
\noindent
\emph{Proof of \eqref{it:Schur_p1}.} Notice that $\{x\in\R^2:\;\abs{\nabla F(x)}\geq 2^\mu\}\subset A_\mu^1 \cup A_\mu^2$, 
where $A_\mu^i:=\{x\in\R^2:\;\abs{\partial_{x_i}F(x)}\geq 2^{\mu-1}\}$. 
Hence on $B_R(z)\cap A_\mu^1$ a well-defined change of variables is given by $(y_1,y_2)=Y(x):=(F(x_1,x_2),x_2)$. 
This change of variables has Jacobian determinant equal to $\abs{\partial_{x_1}F}\gtrsim 2^{\mu}$, so we have
\begin{equation*}
 \begin{aligned}
 \int_{B_R(z)\cap A_1^\mu} \varphi_{\leq\lambda}(F)\varphi_{\geq\mu}(\nabla F)(x) \, dx 
  &\lesssim2^{-\mu}\int_{Y(B_R(z))}\varphi_{\leq\lambda}(F)\varphi_{\geq\mu}(\nabla F)(Y^{-1}(y)) \, dy
  \\
  &\lesssim 2^{-\mu} \int_{\abs{y_2-z_2}\leq R} \varphi_{\leq\lambda}(y_1) \, dy\leq 2^{-\mu} 2^{\lambda}R.
 \end{aligned}
 \end{equation*}
 Exchanging the roles of $x_1$ and $x_2$, in complete analogy we deduce the same bound for 
 $$\int_{B_R(z)\cap A_2^\mu} \varphi_{\leq\lambda}(F)\varphi_{\geq\mu}(\nabla F)\, dx,$$ thus proving the first claim.

\smallskip
\noindent
\emph{Proof of \eqref{it:Schur_p2}.} We estimate the two integrals in \eqref{Sch},
for each of which it will suffice to appropriately apply \eqref{it:Schur_p1}.
To this end, notice that with the localizations in $K(\xi,\eta)$ we have, see \eqref{nabla_xiPhi}, 
$$ \abs{\nabla_\eta\Phi} = \frac{\abs{\xi}\abs{\xi-2\eta}}{\abs{\eta}^2\abs{\xi-\eta}^2}\approx 2^{k+\ell}2^{-2a-2b},
  \qquad \abs{\nabla_\xi\Phi} = \frac{\abs{\eta}\abs{\eta-2\xi}}{\abs{\xi}^2\abs{\xi-\eta}^2}\approx 2^{b+r}2^{-2k-2a}$$ 
and $\Phi$ is smooth in the domains of integration.

Furthermore, for fixed $\xi$ there exist $\xi_0$ and $R \lesssim \min\{2^\ell,2^r,2^{a},2^{b}\}$ 
such that the domain of the integral in $\eta$ is contained in the ball $B_R(\xi_0)$. We then invoke (1) to obtain
\begin{equation*}
\begin{aligned}
  \int_{\R^2} K(\xi,\eta) \, d\eta &\leq 
  \int_{B_R(\xi_0)}\varphi_p(\Phi(\xi,\eta)) \varphi_{2^{k+\ell-2a-2b}}\big(2^{-10}\nabla_\eta\Phi(\xi,\eta)\big) \, d\eta
  \\ &\lesssim 2^{p} 2^{-k-\ell+2a+2b} 2^{\min\{\ell,r,a,b\}}.
\end{aligned}
\end{equation*}
Similarly, for fixed $\eta$  there exists $\eta_0$ such that the domain of the integral in $\xi$ is included in a ball of 
center $\eta_0$ and radius $R \lesssim \min\{2^\ell,2^r,2^{k},2^{a}\}$, which promptly yields
\begin{equation*}
\int_{\R^2} K(\xi,\eta) \, d\xi \lesssim 2^{p} 2^{-b - r + 2k + 2a} 2^{\min\{\ell,r,k,a\}}.
\end{equation*}
Combining these gives the claim \eqref{eq:Schur_p2}.
The bound \eqref{eq:Schur_p3} follows since for $\min\{k,\ell\} \leq \max\{a,b\} - 10$ one has $|r-\max\{a,b\}| \leq 5$.

\end{proof}

\vskip10pt
\subsubsection*{H\"older Type Estimates and Integration by Parts Lemmas} %\label{sec:Holder}
For simplicity of notation we define the following class of multipliers:
\begin{equation}
\label{defSinfty}
\begin{split}
& S^\infty := \{m: (\R^2)^2 \to \mathbb{C} : \,m  \quad \text{continuous and} \quad {\| m \|}_{S^\infty} := \|\mathcal{F}^{-1} m \|_{L^1} < \infty \}.
\end{split}
\end{equation}
%Note that $S^\infty \hookrightarrow L^\infty(\R^2\times\R^2)$.
As we will often localize in frequency space we define, for any symbol $m$,
\begin{align}
\label{mloc}
m^{k, k_1, k_2}(\xi,\eta) := \varphi_{[k-2,k+2]}(\xi)\varphi_{[k_1-2,k_1+2]}(\xi-\eta)\varphi_{[k_2-2,k_2+2]}(\eta) m(\xi,\eta),
\end{align}
see the notation in Section \ref{sec:setup}.
Here is a basic lemma about $S^\infty$ symbols that we will often use:

\begin{lemma}\label{lem:Holdertype}
%For a sufficiently nice function $m:\R^d\times\R^d\to\R$

\setlength{\leftmargini}{1.5em}
\begin{itemize}

\item[(i)] We have $S^\infty \hookrightarrow L^\infty(\R^2\times\R^2)$. If $m, m'\in S^\infty$ then $m \cdot m'\in S^\infty$ and
\begin{equation*}
\|m\cdot m'\|_{S^\infty}\lesssim \|m\|_{S^\infty}\|m'\|_{S^\infty}.
\end{equation*}
Moreover, if $m \in S^\infty$, $A: \R^2 \to \R^2$ is a linear transformation, $v\in \R^2$, and $m_{A,v}(\xi,\eta) := m(A(\xi,\eta)+v)$, then
\begin{equation*}
\|m_{A,v}\|_{S^\infty} = \|m\|_{S^\infty}.
\end{equation*}

\item[(ii)] For $m \in S^\infty$, consider the bilinear operator $T_m:\mathcal{S}(\R^2)\times\mathcal{S}(\R^2)\to\mathcal{S'}(\R^2)$ 
defined by
\begin{align*}
T_m(f,g)(\xi) := \mathcal{F}^{-1}\int m(\xi,\eta) \what{f}(\xi-\eta) \what{g}(\eta)d\eta.
\end{align*}
%Then if $\norm{\check{m}}_{L^1\times L^1}<\infty$ (the inverse Fourier transform being taken in both variables)
%we have $T_m:L^p\times L^q\to L^r$ as a bounded bilinear operator,
%with $1\leq p,q,r\leq\infty$ and the H\"older relation $\frac{1}{r}=\frac{1}{p}+\frac{1}{q}$,
%with operator norm bounded by $\norm{\check{m}}_{L^1\times L^1}<\infty$.
Then, for all $1\leq p,q,r\leq\infty$ satisfying the H\"older relation $\frac{1}{r}=\frac{1}{p}+\frac{1}{q}$, we have
\begin{align*}
{\| T_m(f,g) \|}_{L^p} \lesssim {\| m \|}_{S^\infty} {\| f \|}_{L^p} {\| g \|}_{L^q}.
\end{align*}

\end{itemize}

\end{lemma}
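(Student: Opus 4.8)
The plan is to deduce everything from the single structural fact that $\mathcal{F}^{-1}$ converts products of symbols into convolutions of their kernels and is compatible with affine changes of variables, and to read off a kernel representation for $T_m$; the stated estimates then follow from Young's, Minkowski's and H\"older's inequalities. Throughout I fix the normalization of $\mathcal{F}$ so that, writing $K:=\mathcal{F}^{-1}m$, one has $m(\xi,\eta)=c\int_{(\R^2)^2}e^{\pm i(x\cdot\xi+y\cdot\eta)}K(x,y)\,dx\,dy$ with $\|K\|_{L^1((\R^2)^2)}=\|m\|_{S^\infty}$; the constant $c$ is harmless and I suppress it.

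\textbf{Part (i).} For the embedding, bounding the modulus of $m$ under the integral sign immediately gives $\|m\|_{L^\infty}\leq\|K\|_{L^1}=\|m\|_{S^\infty}$. For the product, I would use $\mathcal{F}^{-1}(m\,m')=(\mathcal{F}^{-1}m)*(\mathcal{F}^{-1}m')$, so that Young's inequality yields $\|\mathcal{F}^{-1}(m\,m')\|_{L^1}\leq\|\mathcal{F}^{-1}m\|_{L^1}\|\mathcal{F}^{-1}m'\|_{L^1}$, which is the asserted submultiplicativity. For the affine invariance, assuming (as in every application in the paper, e.g.\ $(\xi,\eta)\mapsto(\xi,\xi-\eta)$) that $A$ is invertible, a direct computation gives
\begin{equation*}
\mathcal{F}^{-1}(m_{A,v})(w)=|\det A|^{-1}\,e^{i\ell_v(w)}\,(\mathcal{F}^{-1}m)(A^{-\intercal}w)
\end{equation*}
for a suitable real linear form $\ell_v$; since the modulation does not change the modulus, the change of variables $w\mapsto A^{-\intercal}w$ in the defining $L^1$-integral produces exactly the factor $|\det A|$ cancelling the Jacobian, whence $\|m_{A,v}\|_{S^\infty}=\|m\|_{S^\infty}$.

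\textbf{Part (ii).} The key step is the kernel representation of $T_m$. Inserting the representation of $m$ into the definition of $T_m(f,g)$, for $f,g\in\mathcal{S}(\R^2)$ and $K\in L^1$ all the integrals converge absolutely, so Fubini applies; performing the change of variables $\zeta=\xi-\eta$ and recognizing the resulting $\zeta,\eta$-integrals as Fourier inversions leads to
\begin{equation*}
T_m(f,g)(z)=\int_{(\R^2)^2}K(x,y)\,f(z-x)\,g(z-x-y)\,dx\,dy
\end{equation*}
(the precise signs depending on the Fourier normalization and irrelevant here). Minkowski's integral inequality in $L^r$, followed by H\"older's inequality and translation invariance, then gives
\begin{equation*}
\|T_m(f,g)\|_{L^r}\leq\int_{(\R^2)^2}|K(x,y)|\,\|f(\cdot-x)\|_{L^p}\,\|g(\cdot-x-y)\|_{L^q}\,dx\,dy=\|K\|_{L^1}\,\|f\|_{L^p}\,\|g\|_{L^q},
\end{equation*}
using $\frac{1}{r}=\frac{1}{p}+\frac{1}{q}$; since $\|K\|_{L^1}=\|m\|_{S^\infty}$ this is the claimed bound (the case $q=\infty$, $r=p$ being the one used throughout the paper).

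This is the classical Coifman--Meyer argument specialized to our setting, so I do not anticipate any genuine obstacle. The only care required is bookkeeping: fixing the $2\pi$-normalization of $\mathcal{F}$ once and for all so the constants stay harmless, checking invertibility of $A$ at each use of the affine-invariance part, and justifying the interchange of integrals in the derivation of the kernel formula — immediate from $K\in L^1$ together with the Schwartz decay of $f$ and $g$.
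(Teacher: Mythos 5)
Your proposal is correct and follows essentially the same route as the paper: the paper's proof consists precisely of the kernel representation $T_m(f,g)(x)=\int\int f(x-z)g(x-y-z)\check{m}(z,y)\,dy\,dz$ followed by "the claim follows directly," which is exactly your Minkowski--H\"older step, and part (i) is likewise handled by the same direct Fourier computations you spell out. Your added remarks (invertibility of $A$, Fubini justification, reading the conclusion in $L^r$ per the H\"older relation) are just the details the paper leaves implicit.
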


\begin{proof}
The properties in (i) follow directly from the definition \eqref{defSinfty}.
A direct computation unwinding the Fourier transforms shows that
\begin{align*}
T_m(f,g)(x)&=\int_\xi e^{ix\xi}\int_\eta m(\xi,\eta)\hat{f}(\xi-\eta)\hat{g}(\eta) \, d\eta d\xi\\
   & =\int_y\int_z f(x-z)g(x-y-z)\check{m}(z,y) \, dydz,
\end{align*}
 from which the claim follows directly.
\end{proof}

\medskip
We state next a useful lemma, which %-- above a certain threshold --
allows us to use H\"older type bounds when we integrate by parts in time. 

\begin{lemma}\label{lem:H+ibpt}
Assume $t \approx 2^m$ for some $m\in\N$, and $p \geq -m+2\delta m$.
For $\rho \in S^\infty$, with ${\|\rho\|}_{S^\infty} \leq 1$, consider a bilinear operator of the form
\begin{equation*}
\underline{B}_p(v, w)(\xi) := \varphi_{\leq 10 m}(\xi) \int_{\R^2} e^{it\Phi(\xi,\eta)}
  \chi(2^{-p}\Phi(\xi,\eta)) \rho(\xi,\eta)\,\widehat{v}(\xi-\eta) \widehat{w}(\eta) \, d\eta,
\end{equation*}
where $\chi$ is a Schwartz function.
Then, for any $1/p+1/q = 1/2$,
\begin{equation*}
\norm{ \underline{B}_p(v,w)}_{L^2} \lesssim
  \Big( \sup_{\abs{s} \leq 2^{-p}2^{\delta m}} {\|e^{i(t+s)L} v\|}_{L^p} {\|e^{i(t+s)L}w \|}_{L^q} 
  + {\| v \|}_{L^2} {\| w \|}_{L^2} 2^{-10m} \Big).
\end{equation*}
\end{lemma}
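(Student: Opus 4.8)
\textbf{Plan of proof for Lemma \ref{lem:H+ibpt}.}
The strategy is to write the localized phase factor $\chi(2^{-p}\Phi(\xi,\eta))$ as a superposition of linear phases $e^{is\Phi(\xi,\eta)}$ and then absorb each such factor into the linear semigroup. Concretely, I would use the Fourier inversion formula to write
\begin{equation*}
\chi(2^{-p}\Phi(\xi,\eta)) = \int_{\R} \wt{\chi}(\sigma)\, e^{i\sigma 2^{-p}\Phi(\xi,\eta)}\, d\sigma,
\end{equation*}
where $\wt{\chi}$ is (up to constants) the inverse Fourier transform of $\chi$; since $\chi$ is Schwartz, so is $\wt{\chi}$. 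Substituting this into the definition of $\underline{B}_p(v,w)$ and setting $s := \sigma 2^{-p}$, the combined oscillatory factor becomes $e^{i(t+s)\Phi(\xi,\eta)}$, which is exactly the phase appearing in the bilinear form $T_{\rho}$ evaluated on the shifted-in-time profiles $e^{i(t+s)L}v$ and $e^{i(t+s)L}w$ (recall $\Phi(\xi,\eta) = -L(\xi)+L(\xi-\eta)+L(\eta)$, so that $e^{i(t+s)\Phi}$ distributes as $e^{-i(t+s)L(\xi)}\cdot e^{i(t+s)L(\xi-\eta)}\cdot e^{i(t+s)L(\eta)}$). Thus
\begin{equation*}
\underline{B}_p(v,w) = \int_{\R} \wt{\chi}(\sigma)\, \varphi_{\leq 10m}(\textnormal{D})\, T_{\rho}\big(e^{i(t+s)L}v,\, e^{i(t+s)L}w\big)\, d\sigma, \qquad s = \sigma 2^{-p}.
\end{equation*}

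Next I would take $L^2$ norms and use the triangle inequality in the $\sigma$-integral, together with the bilinear Hölder bound from Lemma \ref{lem:Holdertype}(ii): since $\norm{\rho}_{S^\infty}\leq 1$ and the outer cutoff $\varphi_{\leq 10m}$ is an $S^\infty$ multiplier with norm $O(1)$, we get
\begin{equation*}
\norm{\underline{B}_p(v,w)}_{L^2} \lesssim \int_{\R} |\wt{\chi}(\sigma)|\; \norm{e^{i(t+\sigma 2^{-p})L}v}_{L^p}\, \norm{e^{i(t+\sigma 2^{-p})L}w}_{L^q}\, d\sigma.
\end{equation*}
The integral is then split according to whether $|\sigma| \leq 2^{\delta m}$ or $|\sigma| > 2^{\delta m}$. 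On the first region, $|s| = |\sigma 2^{-p}| \leq 2^{-p}2^{\delta m}$, so each factor is bounded by the supremum in the statement, and $\int |\wt\chi(\sigma)|\,d\sigma \lesssim 1$ gives the main term. On the tail region $|\sigma|>2^{\delta m}$, I would instead discard the oscillation and bound the bilinear term crudely: using $\norm{e^{i(t+s)L}v}_{L^p}\lesssim \norm{v}_{L^2}\cdot(\text{frequency-support factor})$ via Bernstein on the support $|\xi|\lesssim 2^{10m}$ (or simply $L^2\times L^\infty$ with the dispersive estimate \eqref{eq:dispest}), one picks up at worst a polynomial-in-$2^{m}$ loss, which is defeated by the rapid decay $|\wt\chi(\sigma)|\lesssim_M |\sigma|^{-M}\lesssim 2^{-M\delta m}$; choosing $M$ large enough yields the $2^{-10m}\norm{v}_{L^2}\norm{w}_{L^2}$ remainder.

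The only delicate point is the bookkeeping on the tail: one must be careful that the cutoff $\varphi_{\leq 10m}(\xi)$ and the fact that $\rho$ is supported where the frequencies are at most polynomially large in $2^m$ are genuinely used, so that Bernstein (or the crude dispersive bound) produces only a polynomial factor $2^{Cm}$ — this is then trivially absorbed by $2^{-M\delta m}$ after fixing $M \geq (C+10)/\delta$. I do not expect any real obstacle here; the representation via $\wt\chi$ is the one structural idea, and everything else is routine once one invokes Lemma \ref{lem:Holdertype}(ii). (A harmless technical remark: one should also check that the hypothesis $p \geq -m+2\delta m$ is what guarantees $2^{-p}2^{\delta m} \leq 2^{m}$ up to constants, so that the shifted times $t+s$ stay comparable to $2^m$ and the dispersive estimate can be applied uniformly; this is only needed for the clean statement of the supremum and is not essential to the argument.)
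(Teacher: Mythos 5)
Your proposal is correct and follows essentially the same route as the paper's proof: Fourier inversion of $\chi$ to superpose linear phases, splitting the $\sigma$-integral at $|\sigma|=2^{\delta m}$, Lemma \ref{lem:Holdertype}(ii) on the main region, and the rapid decay of $\wt{\chi}$ against a crude polynomial-in-$2^m$ loss on the tail. The only cosmetic difference is that the paper bounds the tail by Cauchy--Schwarz in $\eta$ plus the measure of the support $\abs{\xi}\lesssim 2^{10m}$ (rather than Bernstein on $v,w$, which are not individually frequency-localized by the hypotheses), but your alternative crude bound accomplishes the same thing.
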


\begin{proof}
Let us use
\begin{equation*}
\chi(2^{-p}\Phi(\xi,\eta)) = c \int_{\R} e^{iz 2^{-p}\Phi(\xi,\eta)} \check{\chi}(z) \, dz
\end{equation*}
to write
\begin{equation*}
\underline{B}_p(v,w) = c \int_{\R^2} \left( \int_{\R} e^{i(2^{-p}z + t)\Phi(\xi,\eta)} \check{\chi}(z) \, dz \right) 
  \rho(\xi,\eta) \hat{v}(\xi-\eta) \hat{w}(\eta) \, d\eta.
\end{equation*}
Using the rapid decay $\abs{\check{\chi}}\leq (1+\abs{z})^{-M}$, for $M$ large enough,
we can estimate the contribution from the region $\abs{z}\geq 2^{\delta m}$ as
\begin{align*}
  \norm{\int_{\R^2} \Big( \int_{\abs{z} \geq 2^{\delta m}} e^{i(2^{-p}z + t)\Phi(\xi,\eta)}
  \check{\chi}(z) \, dz \Big) \varphi_{\leq 10 m}(\xi) \rho(\xi,\eta)
  \widehat{v}(\xi-\eta) \widehat{w}(\eta) \, d\eta }_{L^2_\xi}
\\
  \lesssim 2^{10m} 2^{-\delta M m} \norm{ v }_{L^2} \norm{ w }_{L^2} 
  \lesssim 2^{-10m} \norm{ v }_{L^2} \norm{ w }_{L^2}.
\end{align*}

We are now left with estimating
\begin{align*}
\norm{\int_{\R^2} \int_{\abs{z} \leq 2^{\delta m}} \!\! \check{\chi}(z) \, \varphi_{\leq 10 m}(\xi) \rho(\xi,\eta) \,
  e^{i(2^{-p}z + t)L(\xi-\eta)}  \widehat{v}(\xi-\eta) e^{i(2^{-p}z + t)L(\eta)}\widehat{w}(\eta) \, d\eta dz }_{L^2_\xi}
\\
\lesssim \sup_{\abs{z} \leq 2^{\delta m}}
  \norm{\int_{\R^2} \rho(\xi,\eta)
  e^{i(2^{-p}z + t)L(\xi-\eta)}  \widehat{v}(\xi-\eta) e^{i(2^{-p}z + t)L(\eta)}\widehat{w}(\eta) \, d\eta }_{L^2_\xi},
\end{align*}
which by virtue of Lemma \ref{lem:Holdertype} and ${\|\rho\|}_{S^\infty}\leq 1$ is bounded by
\begin{equation*}
\sup_{\abs{z} \leq 2^{\delta m}} \norm{ e^{i(t+2^{-p}z)L} v }_{L^p} \norm{ e^{i(2^{-p}z + t)L} w }_{L^q}.
\end{equation*}
The desired conclusion follows.
\end{proof}

\medskip
Here is a basic integration by parts lemma:
\begin{lemma}\label{lemIBP0}
Assume that $\epsilon \in(0,1)$, $\epsilon K\geq 1$, $M \geq 1$ is an integer, and $F,g \in C^M (\R^n)$.
Assume also that $F$ is real-valued and satisfies
\begin{equation*}%\label{IBP01}
|\nabla F| \geq \mathbf{1}_{\supp(g)}, \qquad \big| D^{\alpha} F \big| \lesssim_M \epsilon^{1-|\alpha|} \quad \forall \,\, 2 \leq |\alpha| \leq M.
\end{equation*}
Then
\begin{align}
\label{IBP02}
\Big| \int_{\R^n} e^{iKF} g \, dx \Big| \lesssim \frac{1}{(\epsilon K)^M} \sum_{|\alpha|\leq M} \epsilon^{|\alpha|} {\| D^\alpha g \|}_{L^1}
\end{align}
\end{lemma}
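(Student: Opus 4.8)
The plan is to prove this standard integration-by-parts lemma by iterating the identity that turns the oscillatory factor $e^{iKF}$ into a differential operator acting on the amplitude. First I would introduce the first-order operator
\begin{equation*}
 \mathcal{L}h := \frac{1}{iK} \div\!\left( \frac{\nabla F}{|\nabla F|^2} h \right),
\end{equation*}
which is well-defined on $\supp(g)$ since $|\nabla F| \geq 1$ there, and whose transpose satisfies $\mathcal{L}^T(e^{iKF}) = e^{iKF}$ (using $\nabla F \cdot \nabla(e^{iKF}) = iK|\nabla F|^2 e^{iKF}$). Integrating by parts $M$ times then gives
\begin{equation*}
 \int_{\R^n} e^{iKF} g \, dx = \int_{\R^n} e^{iKF} \, \mathcal{L}^M g \, dx,
\end{equation*}
with no boundary terms since $g$ is compactly supported (or Schwartz). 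Taking absolute values reduces matters to the pointwise bound ${\|\mathcal{L}^M g\|}_{L^1} \lesssim (\epsilon K)^{-M} \sum_{|\alpha|\leq M} \epsilon^{|\alpha|} {\| D^\alpha g \|}_{L^1}$.

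The core of the argument is therefore a bookkeeping estimate: I would show by induction on $M$ that $\mathcal{L}^M g$ is a sum of $O_M(1)$ terms, each of the form $K^{-M}$ times a product of derivatives of $F$ (of total order between $M$ and $2M$, with each individual factor $D^\beta F$ having $|\beta| \geq 1$) divided by a power of $|\nabla F|$ between $M$ and $2M$, multiplied by some $D^\alpha g$ with $|\alpha| \leq M$. The point is that each application of $\mathcal{L}$ either differentiates $g$ (costing one derivative, gaining nothing), or differentiates a factor $D^\beta F$ with $|\beta|\geq 1$ raising its order by one, or differentiates a negative power of $|\nabla F|^2$. Using the hypotheses $|\nabla F| \geq 1$ on $\supp(g)$ and $|D^\alpha F| \lesssim \epsilon^{1-|\alpha|}$ for $2 \leq |\alpha| \leq M$ — and noting that the first-order factors $D^\beta F$ with $|\beta|=1$ are bounded by $|\nabla F|$ times a constant, or more simply absorbed since the net homogeneity works out — one checks that every resulting term is bounded by $K^{-M} \epsilon^{-M} \epsilon^{|\alpha|} |D^\alpha g|$, where the factor $\epsilon^{-M}$ comes from the worst case in which each of the $M$ differentiations of the $F$-factors/denominators contributes a factor $\epsilon^{-1}$. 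Summing over the finitely many terms and integrating in $x$ yields \eqref{IBP02}.

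The main obstacle, such as it is, is purely combinatorial: making the induction precise so that one tracks simultaneously the number of derivatives landing on $g$, the total order of the $F$-derivative factors in the numerator, and the power of $|\nabla F|$ in the denominator, and then verifying that in every branch the $\epsilon$-powers and $K$-powers combine to give exactly $(\epsilon K)^{-M} \epsilon^{|\alpha|}$. One subtlety worth stating carefully is why $\epsilon K \geq 1$ is needed: it guarantees that $(\epsilon K)^{-M}$ is a genuine gain (so that choosing $M$ large is meaningful), and it ensures the lower-order-in-$K$ remainder terms — those where fewer than $M$ of the $\mathcal{L}$'s hit the phase-related factors — are no worse than the displayed bound. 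Since $F$ and $g$ are merely $C^M$, one must also be slightly careful that only derivatives of order $\leq M$ of $F$ and $g$ ever appear, which is automatic from the structure of $\mathcal{L}^M$. No compactness or convergence issues arise beyond the integrability of $D^\alpha g$, which is assumed.
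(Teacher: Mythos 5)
Your proposal is correct and is exactly the standard non-stationary-phase iteration: the paper does not prove this lemma itself but defers to Lemma 5.4 of \cite{IoPau1}, whose proof is precisely the $\mathcal{L}^M$ bookkeeping you describe (iterate $e^{iKF}=\tfrac{1}{iK}\tfrac{\nabla F}{|\nabla F|^2}\cdot\nabla e^{iKF}$, integrate by parts, and track that each differentiation of an $F$-factor or of $|\nabla F|^{-2}$ costs $\epsilon^{-1}$ while each differentiation of $g$ costs nothing, the surplus $\nabla F$'s in numerators being absorbed by the matching $|\nabla F|^{-2}$'s since $|\nabla F|\geq 1$ on $\supp(g)$). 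One small inaccuracy: $\mathcal{L}^M$ does in fact involve $D^{M+1}F$ in the branch where every divergence lands on an $F$-factor starting from $\nabla F$, so your remark that only derivatives of $F$ of order $\leq M$ appear ``automatically'' is not quite right; with the hypotheses as literally stated one should either stop at $M-1$ integrations by parts or read the assumption as holding for $2\leq|\alpha|\leq M+1$, which is harmless since $M$ is arbitrary.
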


The proof is a fairly straightforward integration by parts argument, see Lemma 5.4 in \cite{IoPau1}.

\bibliographystyle{amsplain}
\bibliography{allrefs.bib}

\providecommand{\bysame}{\leavevmode\hbox to3em{\hrulefill}\thinspace}
\providecommand{\MR}{\relax\ifhmode\unskip\space\fi MR }
% \MRhref is called by the amsart/book/proc definition of \MR.
\providecommand{\MRhref}[2]{%
  \href{http://www.ams.org/mathscinet-getitem?mr=#1}{#2}
}
\providecommand{\href}[2]{#2}
\begin{thebibliography}{10}

\bibitem{Alinhac}
S.~Alinhac, \emph{An example of blowup at infinity for a quasilinear wave
  equation}, Ast\'erisque (2003), no.~284, 1--91, Autour de l'analyse
  microlocale. \MR{2003417}

\bibitem{DIPau}
Y.~Deng, A.~D. Ionescu, and B.~Pausader, \emph{The {E}uler--{M}axwell system
  for electrons: global solutions in $2d$}, ArXiv e-print 1605.05340 (2016).

\bibitem{DIPP}
Y.~Deng, A.~D. Ionescu, B.~Pausader, and F.~Pusateri, \emph{Global solutions of
  the gravity-capillary water wave system in 3 dimensions}, ArXiv e-print
  1601.05685 (2016).

\bibitem{Dengrowth}
S.~A. Denisov, \emph{Double exponential growth of the vorticity gradient for
  the two-dimensional {E}uler equation}, Proc. Amer. Math. Soc. \textbf{143}
  (2015), no.~3, 1199--1210. \MR{3293735}

\bibitem{MR1925398}
P.~G. Drazin, \emph{Introduction to hydrodynamic stability}, Cambridge Texts in
  Applied Mathematics, Cambridge University Press, Cambridge, 2002. \MR{1925398
  (2003e:76049)}

\bibitem{b-plane}
T.~{Elgindi} and K.~{Widmayer}, \emph{Long {T}ime {S}tability for {S}olutions
  of a {B}eta-{P}lane {E}quation}, ArXiv e-print 1509.05355 -- To appear in
  Communications on Pure and Applied Mathematics (2015).

\bibitem{GMS2dQNLS}
P.~Germain, N.~Masmoudi, and J.~Shatah, \emph{Global solutions for 2{D}
  quadratic {S}chr\"odinger equations}, J. Math. Pures Appl. (9) \textbf{97}
  (2012), no.~5, 505--543. \MR{2914945}

\bibitem{GMSGWW3d}
\bysame, \emph{Global solutions for the gravity water waves equation in
  dimension 3}, Ann. of Math. (2) \textbf{175} (2012), no.~2, 691--754.
  \MR{2993751}

\bibitem{HPS}
Z.~Hani, F.~Pusateri, and J.~Shatah, \emph{Scattering for the {Z}akharov system
  in 3 dimensions}, Comm. Math. Phys. \textbf{322} (2013), no.~3, 731--753.
  \MR{3079330}

\bibitem{HN}
N.~Hayashi and P.~I. Naumkin, \emph{Asymptotics for large time of solutions to
  the nonlinear {S}chr\"odinger and {H}artree equations}, Amer. J. Math.
  \textbf{120} (1998), no.~2, 369--389. \MR{1613646}

\bibitem{IoPau1}
A.~D. Ionescu and B.~Pausader, \emph{Global solutions of quasilinear systems of
  {K}lein-{G}ordon equations in 3{D}}, J. Eur. Math. Soc. (JEMS) \textbf{16}
  (2014), no.~11, 2355--2431. \MR{3283401}

\bibitem{KP}
J.~Kato and F.~Pusateri, \emph{A new proof of long-range scattering for
  critical nonlinear {S}chr\"odinger equations}, Differential Integral
  Equations \textbf{24} (2011), no.~9-10, 923--940. \MR{2850346}

\bibitem{KiselevSverak}
A.~Kiselev and V.~{\v{S}}ver{\'a}k, \emph{Small scale creation for solutions of
  the incompressible two-dimensional {E}uler equation}, Ann. of Math. (2)
  \textbf{180} (2014), no.~3, 1205--1220. \MR{3245016}

\bibitem{K1}
S.~Klainerman, \emph{The null condition and global existence for systems of
  wave equations}, ch.~Nonlinear systems of partial differential equations in
  applied mathematics, Part 1 (Santa Fe, N.M., 1984), Lectures in Appl. Math.,
  23, Amer. Math. Soc., Providence, RI, 1986.

\bibitem{Majda-A-O-PDE}
A.~Majda, \emph{Introduction to {PDE}s and waves for the atmosphere and ocean},
  Courant Lecture Notes in Mathematics, vol.~9, New York University, Courant
  Institute of Mathematical Sciences, New York; American Mathematical Society,
  Providence, RI, 2003. \MR{1965452 (2004b:76152)}

\bibitem{mcwilliamsGFD}
J.~C. McWilliams, \emph{Fundamentals of geophysical fluid dynamics}, Cambridge
  University Press, 2006.

\bibitem{OhPusateri}
S.-J. Oh and F.~Pusateri, \emph{Decay and scattering for the
  {C}hern-{S}imons-{S}chr\"odinger equations}, Int. Math. Res. Not. IMRN
  (2015), no.~24, 13122--13147. \MR{3436140}

\bibitem{GFD}
J.~Pedlosky, \emph{Geophysical fluid dynamics}, second ed., Springer, 1987.

\bibitem{PusateriShatah}
F.~Pusateri and J.~Shatah, \emph{Space-time resonances and the null condition
  for first-order systems of wave equations}, Comm. Pure Appl. Math.
  \textbf{66} (2013), no.~10, 1495--1540. \MR{3084697}

\bibitem{Sideris}
T.~C. Sideris, \emph{Formation of singularities in three-dimensional
  compressible fluids}, Comm. Math. Phys. \textbf{101} (1985), no.~4, 475--485.
  \MR{815196}

\bibitem{AIP_1.3141499}
J.~Sukhatme and L.~M. Smith, \emph{Local and nonlocal dispersive turbulence},
  Physics of Fluids (1994-present) \textbf{21} (2009), no.~5.

\bibitem{Zlagrowth}
A.~Zlato{\v{s}}, \emph{Exponential growth of the vorticity gradient for the
  {E}uler equation on the torus}, Adv. Math. \textbf{268} (2015), 396--403.
  \MR{3276599}

\end{thebibliography}

\end{document}